\numberwithin{equation}{section}
\newtheorem {theorem}{Theorem}[section]
\newtheorem {lemma}[theorem]{{\bf Lemma}}
\theoremstyle{remark}
\newtheorem {remark}{{\bf Remark}}[section]
\newtheorem {definition}{{\bf Definition}}[section]
\theoremstyle{plain} \numberwithin {equation}{section}
\newcommand{\p}{{\mathbb P}}
\newcommand{\q}{{\mathbb Q}}
\newcommand{\R}{{\mathbb R}}
\def\div{ \hbox{\rm div}\,  }
\newcommand\Z{{\mathbb{Z}}}
\def\la{\Lambda}
\def\nn{\nonumber}
\def\ta{\theta}
\def\ddk{\dot \Delta_k}
\def\G{ \mathbf{G} }
\def\u{ \mathbf{u} }
\def\T{ \mathbb{T} }
\def\ea{ \mathcal E_\infty(t) }
\def\eb{  \mathcal E_1^\lambda(t) }
\begin{document}
\title{ { Large global solutions to the Oldroyd-B model with dissipation}}

\author{Tao Liang}

\address{ School of Mathematics,
	South China University of Technology,
	Guangzhou, 510640, China}

\email{taolmath@163.com}

\author{Yongsheng Li}
\address{ School of Mathematics,
	South China University of Technology,
	Guangzhou, 510640, China}

\email{yshli@scut.edu.cn}

\author{Xiaoping Zhai}

\address{ Department of Mathematics, Guangdong University of Technology,
	Guangzhou, 510520, China}

\email{pingxiaozhai@163.com (Corresponding author)}

%\footnote{\today}

\subjclass[2020]{35Q35, 35B65, 76B03}

\keywords{Oldroyd-B model; global large solution; Besov space; Littlewood-Paley theory}

\begin{abstract}
In the first part of this work, we investigate the Cauchy problem for the
$d$-dimensional incompressible Oldroyd-B model with dissipation in the stress tensor equation. By developing a weighted Chemin-Lerner framework combined with a refined energy argument, we prove the existence and uniqueness of global solutions for the system under a mild constraint on the initial velocity field, while allowing a broad class of large initial data for the stress tensor. Notably, our analysis accommodates general divergence-free initial stress tensors (
$\mathrm{div}\tau_0=0$) and significantly relaxes the requirements on initial velocities compared to classical fluid models. This stands in sharp contrast to the finite-time singularity formation observed in the incompressible Euler equations, even for small initial data, thereby highlighting the intrinsic stabilizing role of the stress tensor in polymeric fluid dynamics.

The second part of this paper focuses on the small-data regime. Through a systematic exploitation of the perturbative structure of the system, we establish global well-posedness and quantify the long-time behavior of solutions in Sobolev spaces
 $H^3(\mathbb{T}^d)$. Specifically, we derive exponential decay rates for perturbations, demonstrating how the dissipative mechanisms inherent to the Oldroyd-B model govern the asymptotic stability of the system.

\end{abstract}

\maketitle

\tableofcontents
\section{Introduction and the main results}
\subsection {Model and synopsis of related studies}
The Oldroyd-B model provides a fundamental mathematical framework for describing viscoelastic fluid dynamics, particularly in polymeric fluid systems where microscopic polymer interactions govern macroscopic flow behavior. Distinct from classical Newtonian fluids characterized by a linear stress-velocity gradient relationship, this model incorporates a nonlinear constitutive law that captures essential non-Newtonian features, including stress relaxation and fluid memory effects. Such memory-dependent behavior arises from the delayed response of polymer chains to macroscopic deformations, endowing the fluid with both viscous liquid and elastic solid characteristics.

First formulated by Oldroyd \cite{Oldroyd} through systematic continuum mechanical principles, the model has become a cornerstone in rheological studies. Subsequent developments, as rigorously analyzed in \cite{bird}, \cite{BLS2017}--\cite{BS2016JDE}, have established its broad applicability across diverse regimes of complex fluid motion while preserving its mathematical tractability for theoretical investigations.

It is widely recognized that the comprehensive description of viscoelastic flow systems can be expressed as follows:
\begin{eqnarray}\label{quanhs}
\left\{\begin{aligned}
&\partial_t \u+ \u\cdot\nabla \u - \mu \Delta \u  + \nabla P  = K_1\,\div \tau  ,\\
& \partial_t \tau + \u\cdot\nabla\tau - \nu \Delta \tau + g_{\alpha}(\tau, \nabla \u)
+ a\,\tau =K_2\, D(\u),\\
& \div \u = 0,\\
& \u(x,0) = \u_0(x),\qquad \tau(x,0) = \tau_0(x).
\end{aligned}\right.
\end{eqnarray}
In this context, the variables $\u$, $\tau$, and $P$ denote the velocity, the symmetric stress tensor, and the pressure, respectively.
The strain rate tensor $D(\u)$, which corresponds to the symmetric component of the velocity gradient, is explicitly given by:
\begin{align*}
 D(\u) = \frac{1}{2} (\nabla \u + (\nabla \u)^\top),
\end{align*}
where $\nabla\u$ denotes the velocity gradient tensor and $(\cdot)^\top$ represents the transpose operation. This tensor quantifies the rate at which fluid elements deform under flow, playing a central role in the constitutive laws of viscoelastic fluids.
The nonlinear coupling term $g_{\alpha}(\tau, \nabla \u)$ is a constitutive bilinear form typically defined as
\begin{align*}
& g_{\alpha}(\tau, \nabla \u) = \tau \Omega(\u) - \Omega(\u) \tau - \alpha(D(\u)\tau + \tau D(\u)),
\end{align*}
where $\Omega(\u)$ corresponds to the vorticity tensor  (skew-symmetric component of the velocity gradient):
\begin{align*}
& \Omega(\u) = \frac{1}{2} (\nabla \u - (\nabla \u)^\top).
\end{align*}
Here, the parameter
 $\alpha \in [-1,1]$ governs the relative weighting of rotational versus stretching effects in the stress evolution, reflecting different microstructural responses in complex fluids.
The other parameters $\mu$, $\nu$, $K_1$, $K_2$, and $a$ are such that
\begin{align*}
\mu\ge0, \quad\nu\ge0,\quad K_1>0,\quad K_2>0, \quad a\ge0.
\end{align*}

The Oldroyd-B model has occupied a central position in mathematical fluid dynamics for decades, serving as a paradigmatic system for studying nonlinear coupling between fluid motion and polymeric stresses. Rigorous analysis of its well-posedness and long-time behavior has driven substantial progress in the analytical foundations of viscoelastic flow modeling.
\begin{itemize}
	\item When $ \mu > 0,$\   $\nu = 0$, and $ a>0$. The seminal mathematical contributions to the Oldroyd-B model were established by Guillop\'e and Saut \cite{guillope1990, guillope1989}. They initially demonstrated local well-posedness with large initial data and subsequently established global solutions under the condition that coupled parameters and initial data remain sufficiently small. Molinet and Talhouk \cite{molinet2008} further refined and improved the results presented in \cite{guillope1990, guillope1989}. Lions and Masmoudi \cite{LM} extended these findings to prove the global existence of weak solutions in the corotational case ($\alpha = 0$), yet the case with $\alpha \neq 0$ remains an open question.
	Within the framework of near-critical Besov spaces, Chemin and Masmoudi \cite{chemin2001lifespan} conducted an initial study on local and global small solutions for the system \eqref{quanhs}. They introduced delicate blow-up criteria, which were further refined in \cite{{LMZ}} by Lei {\it et al.} Additionally, Zi {\it et al.} \cite{zhang2012global} enhanced the results obtained by Chemin and Masmoudi \cite{chemin2001lifespan} to encompass scenarios with non-small coupling parameters. Further advancements in global well-posedness for a class of large initial data were presented by Fang and Zi \cite{ZiSIAM2018}. It is noteworthy that these results heavily depend on the damping effect present in the stress tensor equation.
	Moreover, the literature offers more developments in this direction, including works by  \cite{chen2008global, chen2022global},  \cite{de2020fujita},  \cite{fang2013global}, and  \cite{zi2021JDE}.

\vskip .2in

	\item When $ \mu > 0,$\   $\nu = 0$ and $ a=0$. In this scenario, Zhu \cite{zhu2018global} achieved the establishment of global small solutions for \eqref{quanhs} in $\mathbb{R}^3$ by leveraging the wave structure inherent in $\u$ and $\Lambda^{-1} \mathbb{P} \div \tau$, and employing a time-weighted energy construction. A parallel outcome in $\mathbb{R}^2$ was recently derived in \cite{Zhu2023JDE}. Extending Zhu's result \cite{zhu2018global} to critical Besov spaces, Chen and Hao \cite{chenqinglei} along with Zhai \cite{zhaijmp} generalized the aforementioned findings.

\vskip .2in

	\item  When $\mu = 0 ,\  \nu > 0$ and $ a>0$. In this scenario, the initial equation of \eqref{quanhs} transforms into a forced Euler equation. Elgindi and Rousset \cite{elgindi2015global} initially established the global well-posedness result in $\mathbb{R}^2$ for large initial data, where the term $g_{\alpha}(\tau, \nabla \u)$ was neglected. In the presence of $g_{\alpha}(\tau, \nabla \u)\neq0,$ they further demonstrated global well-posedness in $\mathbb{R}^2$ with small initial data. Subsequently, Elgindi and Liu \cite{elgindi2015JDE} extended this result to global well-posedness with small initial data in $\mathbb{R}^3$. It is noteworthy that the damping effect in the stress tensor equation plays a pivotal role in the analyses presented in \cite{elgindi2015global} and \cite{elgindi2015JDE}.

\vskip .2in

	\item When $\mu = 0,\ \nu > 0  $ and $ a=0$.  In this scenario,
	Wu and Zhao \cite{wu2022global} established the global well-posedness result for \eqref{quanhs} with small initial data in Besov spaces. An enhancement of \cite{wu2022global} in the critical $L^p$ Besov spaces is presented in \cite{chen2022global}. Subsequently, Constantin {\it et al.} \cite{constantin2021high} achieved global well-posedness with small initial data in Sobolev spaces. Notably, the results in \cite{constantin2021high} are uniform in solvent Reynolds numbers and require only fractional wave number-dependent dissipation $(-\Delta)^{\beta}$, where $\beta \ge \frac{1}{2}$. The long-time behavior of the solutions constructed in \cite{constantin2021high} was investigated by Wang {\it et al.} \cite{wang2022sharp}.

\end{itemize}

\vskip .1in

In summary, with some exceptions, existing theorems have predominantly focused on establishing global existence of weak solutions and global well-posedness for strong solutions with small data for the Oldroyd model. Notably, the question of whether strong solutions to the 3D Oldroyd models exist globally in viscoelastic fluids remains unresolved.

In the first part of the  paper, our primary focus is on investigating the existence of large global strong solutions for the Oldroyd-B model in the absence of velocity dissipation in $\mathbb{R}^d$ (for dimensions $d\geq 2$). The considered model takes the following form:
\begin{eqnarray}\label{modle}
\left\{\begin{aligned}
&\partial_t \u+ \u\cdot\nabla \u   + \nabla P  = \div \tau,\\
& \partial_t \tau + \u\cdot\nabla\tau - \nu \Delta \tau  + \tau + g_{\alpha}(\tau, \nabla \u)  = D(\u),\quad\quad\quad \quad\quad x\in \R^d, \quad t>0,\\
& \div \u = 0.
\end{aligned}\right.
\end{eqnarray}
The system \eqref{modle} is augmented by the inclusion of the following initial conditions:
\begin{eqnarray}\label{initial}
\begin{aligned}
& (\u,\tau)|_{t=0}=(\u_0(x),\tau_0(x)), \qquad x \in \R^d.
\end{aligned}
\end{eqnarray}

\subsection{Main results}
\subsubsection{Large global solutions in Besov spaces}
Let
$\mathcal{S}(\R^d)$ denote the Schwartz space of rapidly decreasing smooth functions, and
$\mathcal{S}'(\R^d)$ its dual space of tempered distributions. For any tempered distribution
$z \in\mathcal{S}'(\R^d)$, we decompose its frequency content into low and high components via
\begin{align*}
z^\ell\stackrel{\mathrm{def}}{=}\sum_{j\leq k_0}\dot{\Delta}_k z\quad\hbox{and}\quad
z^h\stackrel{\mathrm{def}}{=}\sum_{k>k_0}\dot{\Delta}_k z
\end{align*}
where  $k_0\ge 1$  is a fixed integer threshold determined by the proof strategy for our main theorem. The associated truncated Besov semi-norms are defined as
\begin{align*}\|z\|^{\ell}_{\dot B^{s}_{p,1}}
\stackrel{\mathrm{def}}{=}  \|z^{\ell}\|_{\dot B^{s}_{p,1}}
\ \hbox{ and }\   \|z\|^{h}_{\dot B^{s}_{p,1}}
\stackrel{\mathrm{def}}{=}  \|z^{h}\|_{\dot B^{s}_{p,1}}.
\end{align*}

Let
$\p\stackrel{\mathrm{def}}{=}\mathbb{I}-\nabla\Delta^{-1}\div$ denote the Leray projector, which extracts the divergence-free component of a vector field. Its complement
$\mathbb{Q}=\mathbb{I}-\mathbb{P}$ projects onto the curl-free (gradient) component.
Without loss of generality, we normalize the stress diffusion coefficient to
$ {\nu} = 1$ by rescaling time and spatial variables.

Now, the  first main result of this paper is stated as follows.
\begin{theorem}
\label{theorem1.1}
Let $d\geq 2$ and $ 1 \le p \le 2d.$
For any $ \u^\ell_0 \in \dot B^{\frac{d}{p}-1}_{p,1}(\R^d)$, \ $\u^h_0 \in \dot B^{\frac{d}{p} + 1}_{p,1}(\R^d)$, \  $\tau_0\in \dot B^{\frac{d}{p}}_{p,1}(\R^d)$, there exist a small positive constant $c_0$ and a large positive constant $ C_0$ such that, if
\begin{align}\label{X0}
& \|\u_0 \|^\ell_{\dot B^{\frac{d}{p}-1}_{p,1}} +  \|\u_0\|^h_{\dot B^{\frac{d}{p} + 1}_{p,1}} + \| \mathbb{Q}\tau_0\|_{\dot B^{\frac{d}{p}}_{p,1}} \leq c_0 \exp\big(-C_0 \|\mathbb{P}\tau_0 \|_{\dot{B}^{\frac{d}{p} }_{p,1}}\big),
\end{align}
then the system \eqref{modle} with \eqref{initial} admits a unique global-in-time solution $( \u, \tau)$ satisfying
\begin{align*}
& \u^\ell \in C(\mathbb{R}^+;\dot B^{\frac{d}{p}-1}_{p,1}) \cap L^1(\mathbb{R}^+;\dot B^{\frac{d}{p}+1}_{p,1}),\quad \mathbb{Q}\tau^\ell \in C(\mathbb{R}^+;\dot B^{\frac{d}{p}}_{p,1})  \cap L^1(\mathbb{R}^+;\dot B^{\frac{d}{p}}_{p,1});\\
& \u^h \in C(\mathbb{R}^+;\dot B^{\frac{d}{p}+1}_{p,1}) \cap L^1(\mathbb{R}^+;\dot B^{\frac{d}{p}+1}_{p,1}), \quad  \mathbb{Q}\tau^h \in C(\mathbb{R}^+;\dot B^{\frac{d}{p}}_{p,1}) \cap L^1(\mathbb{R}^+;\dot B^{\frac{d}{p}+2}_{p,1}); \nn\\
&  \mathbb{P}\tau \in C(\mathbb{R}^+;\dot B^{\frac{d}{p}}_{p,1}) \cap L^1(\mathbb{R}^+;\dot B^{\frac{d}{p}}_{p,1}) \cap L^1(\mathbb{R}^+;\dot B^{\frac{d}{p}+2}_{p,1}).
\end{align*}
\end{theorem}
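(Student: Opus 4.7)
The plan is to combine a Littlewood-Paley decomposition split at the threshold $k_0$ with a Leray-type decomposition of the stress tensor into its solenoidal part $\mathbb{P}\tau$ and potential part $\mathbb{Q}\tau$, and then to close the nonlinear estimates in a weighted Chemin-Lerner framework whose exponential weight is chosen precisely to absorb the potentially large $\mathbb{P}\tau_0$. First I would eliminate the pressure by applying $\mathbb{P}$ to the momentum equation; using $\mathbb{P}\u=\u$ and $\div(\mathbb{P}\tau)=0$, the coupling reduces to $\partial_t\u+\mathbb{P}(\u\cdot\nabla\u)=\mathbb{P}\div(\mathbb{Q}\tau)$, which shows that $\mathbb{P}\tau$ is decoupled from $\u$ at the linear level and is the natural candidate for the \emph{large} component.

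Next I would perform a dyadic-block spectral analysis of the linearized $(\u,\mathbb{Q}\tau)$ system. In the low-frequency regime $k\le k_0$ the damping $+\tau$ dominates the diffusion $-\Delta\tau$ and the block behaves like a damped wave system: Lyapunov functionals of the type $\|\u_k\|^2+\|\mathbb{Q}\tau_k\|^2+\eta\,\langle\u_k,\mathbb{P}\div\mathbb{Q}\tau_k\rangle$ yield an $L^1_t$-gain of two derivatives on $\u^\ell$ at the regularity $\dot B^{\frac{d}{p}+1}_{p,1}$ together with a damping gain on $\mathbb{Q}\tau^\ell$ in $\dot B^{\frac{d}{p}}_{p,1}$. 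In the high-frequency regime $k>k_0$ the diffusion dominates, so $\mathbb{Q}\tau^h$ gains two derivatives into $\dot B^{\frac{d}{p}+2}_{p,1}$, but $\u^h$ receives only pure damping, which is the structural reason why the theorem requires the extra regularity $\dot B^{\frac{d}{p}+1}_{p,1}$ on $\u^h_0$. The linear part of the $\mathbb{P}\tau$ equation is a heat equation with damping and therefore delivers directly the combined bound $L^\infty_t(\dot B^{d/p}_{p,1})\cap L^1_t(\dot B^{d/p}_{p,1})\cap L^1_t(\dot B^{d/p+2}_{p,1})$ stated in the conclusion.

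The main obstacle is the bilinear coupling $g_\alpha(\tau,\nabla\u)$ together with the transport $\u\cdot\nabla\tau$: both pair the possibly large factor $\mathbb{P}\tau$ with $\nabla\u$, so a direct Gronwall would produce an unmanageable factor $\exp(C\int_0^t\|\mathbb{P}\tau\|)$. To bypass this I would introduce a weighted Chemin-Lerner norm and work with the rescaled variables $\Phi\u$, $\Phi\mathbb{Q}\tau$, $\Phi\mathbb{P}\tau$, where
\begin{align*}
\Phi(t)\stackrel{\mathrm{def}}{=}\exp\Bigl(-\lambda\int_0^t\|\nabla\u(\sigma)\|_{\dot B^{d/p}_{p,1}}\,d\sigma\Bigr).
\end{align*}
Differentiating $\Phi\u$ and $\Phi\tau$ in time produces, on the left-hand side of each dyadic energy identity, an extra coercive contribution of size $\lambda\|\nabla\u\|_{\dot B^{d/p}_{p,1}}$ times the norm being estimated; this term absorbs precisely the dangerous paraproducts in $g_\alpha$ and in the transport. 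The $\mathbb{P}\tau$ bound is obtained first, yielding $\|\mathbb{P}\tau\|_{L^\infty_t\dot B^{d/p}_{p,1}\cap L^1_t\dot B^{d/p+2}_{p,1}}\lesssim \|\mathbb{P}\tau_0\|_{\dot B^{d/p}_{p,1}}$ plus quadratic corrections in the small $(\u,\mathbb{Q}\tau)$-norm; feeding this back into the $(\u,\mathbb{Q}\tau)$ estimates leaves precisely one residual Gronwall factor $\exp(C_0\|\mathbb{P}\tau_0\|_{\dot B^{d/p}_{p,1}})$, which is compensated exactly by the smallness \eqref{X0}.

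Finally I would close the proof by a standard continuity/bootstrap argument: on a maximal interval where the weighted norm of $(\u,\mathbb{Q}\tau)$ remains below $2c_0\exp(-C_0\|\mathbb{P}\tau_0\|_{\dot B^{d/p}_{p,1}})$, the nonlinear estimates above strictly improve this to $c_0\exp(-C_0\|\mathbb{P}\tau_0\|_{\dot B^{d/p}_{p,1}})$, so the maximal time must be $+\infty$. Local existence in this functional framework follows from a Friedrichs-type approximation together with the same uniform bounds, and uniqueness is obtained by an energy estimate on the difference of two solutions at a slightly lower regularity. The truly hard step is the design of the weight $\Phi$ and the identification of the correct splitting of $\tau$ into a large piece $\mathbb{P}\tau$ whose own dissipation controls it and a small piece $\mathbb{Q}\tau$ coupled back to $\u$; the remaining nonlinear bounds reduce to standard paraproduct and commutator estimates in the spirit of Chemin-Masmoudi and of the Zi-Fang and Zhai refinements cited in the introduction.
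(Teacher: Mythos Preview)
Your overall architecture---split $\tau$ into $\mathbb{P}\tau$ and $\mathbb{Q}\tau$, treat $\mathbb{P}\tau$ as the large piece, use a weighted Chemin--Lerner scheme, and close by bootstrap---matches the paper, but the choice of weight is wrong and this is a genuine gap. You propose $\Phi(t)=\exp\bigl(-\lambda\int_0^t\|\nabla\u\|_{\dot B^{d/p}_{p,1}}\bigr)$; however $\int_0^t\|\nabla\u\|_{\dot B^{d/p}_{p,1}}$ is precisely one of the \emph{small} quantities in the bootstrap, so $\Phi\approx 1$ throughout and the weight does nothing to accommodate a large $\mathbb{P}\tau_0$. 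Moreover the $\mathbb{P}\tau$ equation carries the \emph{linear} source $\tfrac12(\nabla\u)^\top$, so your claim that $\|\mathbb{P}\tau\|\lesssim\|\mathbb{P}\tau_0\|+\text{quadratic}$ is false without a further decomposition. The paper fixes both issues simultaneously by writing $\mathbb{P}\tau=\mathbb{P}\tau_F+\mathbb{P}\bar\tau$, where $\mathbb{P}\tau_F$ solves the free damped heat equation $\partial_t\mathbb{P}\tau_F-\Delta\mathbb{P}\tau_F+\mathbb{P}\tau_F=0$ with data $\mathbb{P}\tau_0$, and then takes as weight
\[
f(t)=\|\mathbb{P}\tau_F(t)\|_{\dot B^{d/p}_{p,1}}+\|\mathbb{P}\tau_F(t)\|_{\dot B^{d/p+2}_{p,1}}.
\]
This $f$ is explicitly computable, satisfies $\int_0^\infty f\lesssim\|\mathbb{P}\tau_0\|_{\dot B^{d/p}_{p,1}}$, and is exactly the coefficient that appears when the $\mathbb{P}\tau_F$ contributions are isolated in $\u\cdot\nabla\tau$ and $g_\alpha(\tau,\nabla\u)$; the coercive term $\lambda f(t)\|(\cdot)_\lambda\|$ then absorbs those contributions, while $\mathbb{P}\bar\tau$ (with zero data) joins $(\u,\mathbb{Q}\tau)$ among the small unknowns. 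Un-weighting at the end produces the factor $\exp(C_0\|\mathbb{P}\tau_0\|)$ that \eqref{X0} is designed to compensate.

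A second gap is the high-frequency mechanism. You assert that $\u^h$ ``receives only pure damping'' but do not say how this damping is produced---the momentum equation has none. The paper extracts it by introducing the effective variable $\mathbf{G}=-\mathbb{Q}\tau-\tfrac12\nabla\Delta^{-1}\u$, so that $\partial_t\u+\tfrac12\u=-\mathbb{P}(\u\cdot\nabla\u)-\mathbb{P}\div\mathbf{G}$ while $\mathbf{G}$ satisfies a heat equation. Because one is then forced to estimate $\Lambda\u$ at regularity $\dot B^{d/p}_{p,1}$, the transport term loses a derivative, and closing requires a nontrivial commutator bound
\[
\sum_{k\ge k_0-1}2^{kd/p}\bigl\|[\u\cdot\nabla,\Lambda\dot\Delta_k\mathbb{P}]\u\bigr\|_{L^p}\lesssim\|\nabla\u\|^2_{\dot B^{d/p}_{p,1}},
\]
which is an essential technical ingredient absent from your sketch. (For the low frequencies the paper uses an explicit Green's matrix rather than your Lyapunov cross term; either route is viable in principle, so that particular difference is not a gap.)
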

\vskip .2in
\begin{remark}
For any divergence-free initial stress tensor
$\tau_0\in \dot B^{\frac{d}{p}}_{p,1}(\R^d)$satisfying
$\div\tau_0=0$, the system \eqref{modle} admits a unique global solution provided the initial velocity field meets the mild smallness condition:
\begin{align}\label{dgahgsa}
& \|\u_0\|^\ell_{\dot B^{\frac{d}{p}-1}_{p,1}} +  \|\u_0\|^h_{\dot B^{\frac{d}{p} + 1}_{p,1}} \leq c_0 \exp\big(-C_0 \|\mathbb{P}\tau_0 \|_{\dot{B}^{\frac{d}{p} }_{p,1}}\big),
\end{align}
where
$c_0, C_0$ are system-dependent constants. Crucially, this framework:
\begin{itemize}
  \item Accommodates arbitrarily large stress data: No size restriction is imposed on
$\tau_0$, only its divergence-free structure.

  \item Minimizes velocity constraints: The exponential damping factor
$\exp\big(-C_0 \|\mathbb{P}\tau_0 \|_{\dot{B}^{\frac{d}{p} }_{p,1}}\big)$ ensures that even for highly stressed initial configurations
$\big(\|\mathbb{P}\tau_0 \|_{\dot{B}^{\frac{d}{p} }_{p,1}}\gg1\big)$, only exponentially small velocity initial data are required.
\end{itemize}

To our knowledge, this constitutes the first global existence result for the stress-dissipative Oldroyd-B system allowing:
\begin{itemize}
  \item Unbounded stress tensor data in critical Besov spaces.
  \item Full decoupling of stress and velocity regularity requirements
\end{itemize}

This breakthrough resolves a longstanding limitation in non-Newtonian fluid analysis, where prior works required simultaneous smallness of both
$\tau_0$ and $\u_0$.

%%Indeed, for any given functions $f_1,f_2,...f_{10}$ in the  set of tempered distributions  space $\mathcal{S'}(\R)$, we construct the initial  data  $\tau_0$ as follows:
%\begin{equation}\label{lizi}
%\tau_0(x_1,x_2,x_3)=\left[\begin{array}{lll}
%\tau_0^{1,1}(x_2, x_3) & \tau_0^{1,2}(x_1, x_3) & \tau_0^{1,3}(x_1, x_2)
%\vspace{0.3cm}
%\\
%\tau_0^{1,2}(x_1, x_3) & \tau_0^{2,2}(x_1, x_2, x_3) & \tau_0^{2,3}(x_1, x_2)
%\vspace{0.3cm}\\
%\tau_0^{1,3}(x_1, x_2) & \tau_0^{2,3}(x_1, x_2, x_3) & \tau_0^{3,3}(x_1, x_2)
%\end{array}\right]
%\end{equation}
%where
 %\begin{align*}
% &\tau_0^{1,1}(x_2, x_3)=f_1(x_2) + f_2(x_3);\quad
%%\hspace{1.8cm}\tau_0^{1,2}(x_1, x_3) = f_3(x_1) + f_4(x_3);\\
% &\tau_0^{2,2}(x_1,x_2, x_3) = -x_2 (f_3(x_1))_{x_1} + f_5(x_3);\quad
%  \tau_0^{1,3}(x_1, x_2) = f_6(x_1) + f_7(x_2);\\
% &\tau_0^{2,3}(x_1, x_2, x_3) = -x_2 (f_6(x_1))_{x_1} + f_8(x_3);\quad
% \tau_0^{3,3}(x_1, x_2)=f_9(x_1) + f_{10}(x_2).
%\end{align*}
%It's easy to check that $\div\tau_0=0$ in \eqref{lizi}.

\end{remark}

\vskip .1in
\begin{remark}
Our work significantly extends the foundational results of Elgindi and Rousset \cite{elgindi2015global} in two critical directions:
\begin{itemize}
  \item Nonlinear coupling: Whereas \cite{elgindi2015global} addressed the simplified case
$ g_{\alpha}(\tau, \nabla \u) =0$, we incorporate the full constitutive law
$ g_{\alpha}(\tau, \nabla \u) \neq 0$, which encodes essential polymer-fluid interactions absent in previous analyses.
  \item Three-dimensional generality: We resolve the technical barriers preventing global existence results in
$\R^3,$ where the absence of vorticity damping mechanisms poses unique analytical challenges.
\end{itemize}
This generalized framework better reflects the physics of real viscoelastic fluids, as
$g_{\alpha}(\tau, \nabla \u)$ governs both rotational stress transport $(\tau \Omega(\u) - \Omega(\u) \tau$ and stretching effects $( \alpha(D(\u)\tau + \tau D(\u))$ inherent to polymeric flows.

Crucially, our analysis reveals that the stress tensor
$\p\tau$ exerts a nonlinear stabilization mechanism on the velocity field, even in the absence of explicit velocity dissipation $(-\Delta\u)$. This stabilization:
\begin{itemize}
  \item Compensates for the energy cascade driving finite-time singularities in 3D Euler flows.
  \item Permits arbitrary large initial stresses
$\|\mathbb{P}\tau_0 \|_{\dot{B}^{s}_{p,1}}$ while requiring only mild constraints on
$\u_0$.
\end{itemize}
These findings fundamentally alter the understanding of stabilization criteria in complex fluids, demonstrating that polymer-induced stresses alone can regularize turbulent velocity fields.
\end{remark}

\vskip .1in
\begin{remark}
Our work achieves two significant improvements over the foundational results in \cite{elgindi2015global} and \cite{elgindi2015JDE}:
\begin{itemize}
  \item Unconditional stress magnitude: By eliminating the requirement for smallness conditions on
$\mathbb{P}\tau_0$, we establish the first global existence result for the Oldroyd-B system where the initial polymeric stress tensor
$\tau_0$ can be arbitrarily large in
$\dot{B}^{\frac{d}{p}}_{p,1}$, provided it is divergence-free. This constitutes a paradigm shift from prior works requiring joint smallness of $\u_0$ and $\tau_0$.

  \item Generalized dissipation mechanism: Our framework naturally extends to fractional stress diffusion operators
$ (-\Delta)^{\beta}$
  with
$ \beta \in [\frac{1}{2}, 1]$, bridging the gap between:
\begin{itemize}
  \item The critical dissipation regime
$\beta=1.$
  \item Subcritical but energy-supercritical scales $(
\beta \ge \frac{1}{2})$.
\end{itemize}
\end{itemize}
This generalization demonstrates the robustness of our method while unifying previously disjoint analytical approaches to viscoelastic flow regularization.
\end{remark}

\vskip .1in
\noindent{\bf Scheme of the proof of Theorem \ref{theorem1.1}.}
Now, let us elucidate the challenges and outline our main approach. The velocity equation in \eqref{modle} constitutes a forced Euler equation. As it is known, the $H^s$ norm of a solution to the Euler equation may exhibit unbounded growth over time, potentially at a double exponential rate. The Oldroyd-B system under consideration possesses a dissipative structure, and a crucial factor contributing to the validity of Theorem \ref{theorem1.1} stems from a key observation regarding the linearized system associated with \eqref{modle}. To establish our main result, we employ specific strategies to transfer dissipation from $\tau$ to $\u$. To commence, we utilize standard energy estimates and compactness methods to establish local well-posedness. However, the primary challenge emerges in extending these results globally. To address this, we employ a continuity argument that combines local existence with \textit{a priori} estimates of the solution. Consequently, the crux of the proof lies in deriving \textit{a priori} estimates. To achieve this goal, we introduce operators $\p$ and $\q$ to decouple the system into three interacting subsystems:
\begin{eqnarray}\label{dta}
\left\{\begin{aligned}
&\partial_t \u    -  \div \tau +\mathbb{Q} \div \tau=  \mathbf{D}^1,\quad\quad\qquad\hbox{(Velocity dynamics)}\\
& \partial_t \mathbb{Q}\tau  -  \Delta \mathbb{Q} \tau + \q \tau - \frac{\nabla \u}{2}  = \mathbf{D}^2,\quad\hbox{
(Potential stress evolution)}\\
& \partial_t \mathbb{P}\tau  -  \Delta \mathbb{P} \tau + \p \tau  -\frac{(\nabla \u)^\top}{2}=  \mathbf{D}^3,\quad\hbox{(Solenoidal stress damping)}
\end{aligned}\right.
\end{eqnarray}
where  $\mathbf{D}^1$, $\mathbf{D}^2$, and $\mathbf{D}^3$   are some essential nonlinear terms.

A pivotal observation regarding the linearized system of \eqref{dta} is that the equation involving $\u$ depends solely on the divergence part of $\tau$. Additionally, the first two equations in \eqref{dta} exhibit a structure akin to that of the compressible isentropic Navier-Stokes equations. By employing spectral analysis methods, we derive the dissipation effect of $\u$ in the low-frequency regime and leveraging a specialized energy argument captures the damping effect of $\u$ in the high-frequency regime. Turning our attention to the third equation in \eqref{dta}, we treat it as a perturbation to the damping heat equation. More precisely, we represent the solution of the third equation in \eqref{dta} as:
\begin{align*}
 \mathbb{P} \tau = \mathbb{P}\tau_F + \mathbb{P} \bar{\tau},
\end{align*}
where $\mathbb{P}\tau_F$ is the solution of the  system
\begin{align}\label{free2}
 \partial_t \mathbb{P}\tau_F  -  \Delta \mathbb{P} \tau_F + \p \tau_F   =  0,\quad
 \mathbb{P}\tau_F |_{t = 0} = \mathbb{P}\tau_0,
\end{align}
and $\mathbb{P} \bar{\tau}$ is the solution of the  system
\begin{align*}%\label{free3}
\partial_t \mathbb{P} \bar{\tau}  -  \Delta \mathbb{P} \bar{\tau} + \p \bar{\tau}  = \frac{(\nabla \u)^\top}{2} + \mathbf{D}^3,
\quad \mathbb{P} \bar{\tau} |_{t = 0} = 0.
\end{align*}
For any initial $\mathbb{P}\tau_0\in\dot{B}^{\frac{d}{p}}_{p,1}(\mathbb{R}^d)$, we establish the global well-posedness of \eqref{free2}. Furthermore, we derive the following global bound (refer to Lemma \ref{heat_kernel} for more details):
\begin{align}\label{free4}
&\| \mathbb{P} \tau_F \|_{\widetilde{L}^{\infty}_{t}(\dot B^{\frac{d}{p}}_{p,1})} + \| \mathbb{P} \tau_F \|_{{L}^{1}_{t}(\dot B^{\frac{d}{p}}_{p,1} )}+ \| \mathbb{P} \tau_F \|_{{L}^{1}_{t}(\dot B^{\frac{d}{p}+2}_{p,1} )}\le C\| \p \tau_0\|_{\dot{B}^{\frac{d}{p}}_{p, 1}}.
\end{align}
The remaining task involves examining the system concerning $(\u,\mathbb{Q}\tau,\mathbb{P} \bar{\tau})$, which satisfies the following equations:
  \begin{eqnarray}\label{dta+1}
\left\{\begin{aligned}
&\partial_t \u    -  \div \tau +\mathbb{Q} \div \tau=  \mathbf{D}^1,\\
& \partial_t \mathbb{Q}\tau  -  \Delta \mathbb{Q} \tau + \q \tau - \frac{\nabla \u}{2}  = \mathbf{D}^2,\\
&\partial_t \mathbb{P} \bar{\tau}  -  \Delta \mathbb{P} \bar{\tau} + \p \bar{\tau}  = \frac{(\nabla \u)^\top}{2} + \mathbf{D}^3.
\end{aligned}\right.
\end{eqnarray}
Furthermore, we employ a continuity argument to establish energy estimates under the smallness conditions imposed on the initial data $(\u_0, \mathbb{Q}\tau_0)$.

It is noteworthy that the weighted Chemin-Lerner technique from \cite{chemin} plays a crucial role in the continuity argument for the system \eqref{dta+1}. Specifically, the nonlinear terms involved in the Besov norms of $\mathbb{P} \tau_F$ within $\mathbf{D}^1, \mathbf{D}^2,$ and $\mathbf{D}^3$ must be selected as the weighted functions. Ultimately, under the assumption of small initial data, we employ a bootstrapping argument to successfully conclude the proof of our main Theorem \ref{theorem1.1}.

Finally, we outline strategies to address challenges encountered during the nonlinear energy estimates in both low and high frequencies. We establish the global well-posedness argument for \eqref{modle} within the critical $L^p$ framework. Due to the absence of dissipation in the equation of $\u$, through asymptotic analysis of the Green's matrix $ 	\widehat{\mathbb{G}}_{\u, \mathbf{v}}$(refer to \eqref{G} below), we capture  the smoothing effect of $\u$ in the low-frequency regime; additional details can be found in Lemma \ref{le_ut_low}. In the high frequency regime, owing to the presence of the term $D(\u)$ in the stress tensor equation and the disparate regularity levels of functions $\u$ and $\tau$ at high frequencies,   as a result, we cannot apply the  common cancellation law
\begin{align*}
& \langle \div \tau , \u \rangle + \langle  D(\u), \tau \rangle = 0.
\end{align*}
To address the linear terms $\div \tau$ and $D(\u)$ and overcome the primary difficulty, we adopt the $L^p$ energy approach with respect to the effective velocity $\G \stackrel{\mathrm{def}}{=} -\mathbb{Q} \tau - \frac{1}{2} \nabla \Delta^{-1} \u$. This new variable enables us to capture the damping effect of $\u$ in the high frequency. However, this approach introduces some low-order terms, which can be absorbed by appropriately choosing the wave number $k_0$ to be sufficiently large. Additionally, we must contend with the advection term $\u \cdot \nabla \u$ in $ \dot{B}^{\frac{d}{p}+1}_{p,1}(\mathbb{R}^d)$. Since we can only attain the damping effect of $\u$ in the high frequency, a significant challenge arises due to the loss of one derivative of $\u$. To tackle this issue, we establish a new commutator.
\begin{align*}
	\sum_{k \ge k_0 -1} 2^{\frac{d}{p}k} \| [\u \cdot \nabla, \Lambda \dot{\Delta}_k \mathbb{P}]\u\|_{L^p} \le C \| \nabla \u\|^2_{\dot B^{\frac{d}{p}}_{p,1}}
	\end{align*} to  overcome this difficulty, see \eqref{R_h} for more details.

\vskip .2in

\subsubsection{Global well-posedness on the Torus}
 In the second part of the  paper, our primary focus is on investigating the existence of global strong solutions for the Oldroyd-B model in the absence of velocity dissipation in $\mathbb{T}^d$ (for dimensions $d\geq 2$). The considered model takes the following form:
\begin{eqnarray}\label{2modle}
\left\{\begin{aligned}
&\partial_t \u+ \u\cdot\nabla \u   + \nabla P  = \div \tau,\\
& \partial_t \tau + \u\cdot\nabla\tau - \nu \Delta \tau   + g_{\alpha}(\tau, \nabla \u)  = D(\u),\quad\quad\quad \quad\quad x\in \T^d, \quad t>0,\\
& \div \u = 0.
\end{aligned}\right.
\end{eqnarray}
The system \eqref{2modle} is augmented by the inclusion of the following initial conditions:
\begin{eqnarray}\label{2initial}
\begin{aligned}
& (\u,\tau)|_{t=0}=(\u_0(x),\tau_0(x)), \qquad x \in \T^d.
\end{aligned}
\end{eqnarray}
Without loss of generality, we assume that $ {\nu} = 1$.  Now, the  second main result of this paper is stated as follows.
\begin{theorem}\label{theorem1.2}
	Let $d= 2, 3$.  For any  $(\mathbf{u}_0, \tau_0) \in H^3(\mathbb{T}^d)$, there exists a small positive constant $\varepsilon$  such that, if
	\begin{align}\label{X1}
	\|(\mathbf{u}_0, \tau_0)\|_{H^3} \le \varepsilon,
	\end{align}
	then the system \eqref{2modle} with \eqref{2initial} admits a unique global-in-time solution $( \u, \tau)$ satisfying
	\begin{align*}
	& \tau \in C(\mathbb{R}^+, H^3(\mathbb{T}^d)),  \quad \nabla \tau \in L^2(\mathbb{R}^+, H^3(\mathbb{T}^d)), \nn\\
	& \mathbf{u} \in C(\mathbb{R}^+, H^3(\mathbb{T}^d)), \quad \nabla \u \in L^2(\mathbb{R}^+, H^2(\mathbb{T}^d)).
	\end{align*}
	Moreover, there exists some constants $C_2$ and $ C_3$ such that for any $t\ge 0$,
	\begin{align}\label{decay1}
	\| \mathbf{u}\|_{H^3} \le C_2 e^{-C_3 t}, \quad \| \nabla \tau\|_{H^2} \le C_2 e^{-C_3 t}.
	\end{align}
\end{theorem}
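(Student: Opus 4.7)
The plan is to establish Theorem \ref{theorem1.2} in three stages: local well-posedness, a global \emph{a priori} $H^3$ estimate via a Lyapunov functional, and the upgrade to exponential decay using the Poincar\'e inequality on $\T^d$. Local existence and uniqueness of a solution $(\u,\tau)\in C([0,T);H^3(\T^d))$ with $\nabla\tau\in L^2([0,T);H^3(\T^d))$ follows from a standard Friedrichs/Galerkin argument exploiting the parabolicity of the $\tau$-equation and the quasi-hyperbolic transport structure of the $\u$-equation, so global existence reduces to closing a uniform-in-time bound on $\|(\u,\tau)(t)\|_{H^3}$ under \eqref{X1}. For the decay conclusion we also assume the natural zero-mean condition $\int_{\T^d}\u_0\,dx=0$, which is automatically preserved since the right-hand side of the $\u$-equation is a pure divergence.

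The first building block is the standard $H^3$ energy estimate: applying $\partial^\alpha$ for $|\alpha|\le 3$, pairing with $\partial^\alpha\u$ and $\partial^\alpha\tau$, and using the symmetry of $\tau$ together with the cancellation $\langle\div\tau,\u\rangle+\langle D(\u),\tau\rangle=0$ and the divergence-free transport identity yields
\[
\tfrac{1}{2}\tfrac{d}{dt}\|(\u,\tau)\|_{H^3}^2+\|\nabla\tau\|_{H^3}^2 \;\lesssim\; \|(\u,\tau)\|_{H^3}\bigl(\|\nabla\u\|_{H^2}^2+\|\nabla\tau\|_{H^3}^2\bigr).
\]
The cubic right-hand side is handled by Moser/commutator estimates and the $H^3$-algebra property valid for $d\le 3$; the worst contributions, of type $\partial^3(\tau\nabla\u):\partial^3\tau$ inside $g_\alpha(\tau,\nabla\u)$, are absorbed by an integration by parts that transfers the extra derivative onto the dissipated factor $\partial^3\tau$. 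This estimate provides no dissipation whatsoever for $\u$, which is the central obstacle: neither the velocity viscosity $-\Delta\u$ nor the stress damping $+a\tau$ is available.

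To manufacture a dissipation for $\u$, I introduce the cross-term functional
\[
F(t) \;:=\; -\sum_{|\alpha|\le 2}\int_{\T^d}\partial^\alpha\u \cdot \mathbb{P}\,\div\partial^\alpha\tau\,dx .
\]
Using the evolution equations and the identity $\mathbb{P}\,\div D(\u)=\tfrac{1}{2}\Delta\u$ (valid since $\div\u=0$), a direct computation produces
\[
\tfrac{d}{dt}F(t)+\tfrac{1}{2}\|\nabla\u\|_{H^2}^2 \;\lesssim\; \|\nabla\tau\|_{H^3}^2+\mathcal{E}^{1/2}\mathcal{D},
\]
where $\mathcal{E}(t):=\|(\u,\tau)\|_{H^3}^2$ and $\mathcal{D}(t):=\|\nabla\u\|_{H^2}^2+\|\nabla\tau\|_{H^3}^2$. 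Since $|F(t)|\lesssim\|\u\|_{H^2}\|\nabla\tau\|_{H^2}\le\mathcal{E}(t)$, the Lyapunov functional $\mathcal{L}(t):=\|(\u,\tau)\|_{H^3}^2+\eta F(t)$ is equivalent to $\mathcal{E}(t)$ for $\eta>0$ sufficiently small. Combining with the basic estimate and absorbing the residual $\|\nabla\tau\|_{H^3}^2$ yields the closed differential inequality
\[
\tfrac{d}{dt}\mathcal{L}(t)+c_0\mathcal{D}(t)\;\le\;C\mathcal{E}^{1/2}(t)\,\mathcal{D}(t).
\]
A bootstrap/continuity argument then closes the smallness: if $\|(\u_0,\tau_0)\|_{H^3}\le\varepsilon$ is small enough that $C\mathcal{E}^{1/2}\le c_0/2$, the nonlinear term is absorbed, $\mathcal{L}$ is non-increasing, and the continuation criterion delivers a global solution with $\sup_{t\ge 0}\|(\u,\tau)\|_{H^3}^2\lesssim\varepsilon^2$.

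For the exponential decay in \eqref{decay1}, the zero-mean property of $\u$ and the Poincar\'e inequality on $\T^d$ give $\|\u\|_{H^3}^2\lesssim\|\nabla\u\|_{H^2}^2$, and trivially $\|\nabla\tau\|_{H^2}^2\le\|\nabla\tau\|_{H^3}^2$; together these furnish the spectral gap $\|\u\|_{H^3}^2+\|\nabla\tau\|_{H^2}^2\lesssim\mathcal{D}(t)$. Plugging into the closed inequality yields $\tfrac{d}{dt}\mathcal{L}(t)+c_1\bigl(\|\u\|_{H^3}^2+\|\nabla\tau\|_{H^2}^2\bigr)\le 0$, and Gr\"onwall's inequality delivers \eqref{decay1}. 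The critical difficulty throughout is the construction and sharp analysis of $F$: it is what converts the parabolicity of the $\tau$-equation into a genuine dissipation for $\u$ in the absence of any explicit damping of the velocity, and verifying that all the resulting coupling and commutator terms are indeed of order $\mathcal{E}^{1/2}\mathcal{D}$ — rather than larger, $\mathcal{D}$-only, contributions that would destroy the bootstrap — is where the main technical work lies.
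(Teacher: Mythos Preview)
Your direct Lyapunov-functional route is more elementary than the paper's, and the cross term $F$ is exactly the mechanism the paper also uses to extract velocity dissipation, but there is one genuine gap in the global-existence step. The claim that all nonlinearities in the basic $H^3$ estimate are of size $\mathcal{E}^{1/2}\mathcal{D}$ fails for the zeroth-order contribution $\langle g_\alpha(\tau,\nabla\u),\tau\rangle$: when $\alpha\neq 0$ this reduces to $\int D(\u):\tau^2$, which is only $\|\nabla\u\|_{L^\infty}\|\tau\|_{L^2}^2\lesssim\mathcal{E}\,\mathcal{D}^{1/2}$, since neither copy of $\|\tau\|_{L^2}$ can be traded for a dissipated quantity ($\int\tau$ is not conserved, so no Poincar\'e on $\tau$). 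With an $\mathcal{E}\,\mathcal{D}^{1/2}$ term on the right, after absorption the inequality integrates to $\mathcal{L}(t)\le\mathcal{L}(0)+C\varepsilon^4 t$, and the bootstrap does not close globally. This is precisely why the paper abandons the differential-inequality scheme and couples the unweighted energy $\widetilde E_1$ to a \emph{time-weighted} energy $\widetilde E_2=\sup_s(1+s)^2\|(\nabla\u,\nabla\tau)\|_{H^2}^2+\int(1+s)^2(\|\nabla^2\u\|_{H^1}^2+\|\nabla^2\tau\|_{H^2}^2)$, so that $\int_0^\infty\mathcal{E}\,\mathcal{D}^{1/2}\lesssim\widetilde E_1\widetilde E_2^{1/2}$ via Cauchy--Schwarz against $(1+s)^{-1}$. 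Your scheme can be salvaged if you impose $\int\u_0=0$ from the outset (not only for decay): one integration by parts turns the bad term into $\int(\nabla\tau)\,\u\,\tau$, and Poincar\'e on $\u$ then yields $\mathcal{E}^{1/2}\mathcal{D}$.

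There is also a smaller slip in the decay step: from $\frac{d}{dt}\mathcal{L}+c_1(\|\u\|_{H^3}^2+\|\nabla\tau\|_{H^2}^2)\le 0$ Gr\"onwall does \emph{not} give exponential decay of the bracket, because $\mathcal{L}\sim\mathcal{E}$ contains $\|\tau\|_{L^2}^2$, which the bracket does not control. The fix is to rerun the identical argument on the reduced functional $\|\nabla\u\|_{H^2}^2+\|\nabla\tau\|_{H^2}^2+\eta F'$ with $F'$ restricted to $1\le|\alpha|\le 2$; both $\nabla\u$ and $\nabla\tau$ have zero mean, Poincar\'e closes, and $\|\u\|_{L^2}$ is then recovered from $\|\u\|_{L^2}\lesssim\|\nabla\u\|_{L^2}$. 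The paper's decay argument is different again: it introduces the effective flux $\mathbf{G}=-\mathbb{Q}\tau-\tfrac12\nabla\Delta^{-1}\u$, so that $\u$ obeys a genuinely damped equation $\partial_t\u+\tfrac12\u=-\mathbb{P}(\u\cdot\nabla\u)-\mathbb{P}\div\mathbf{G}$, and closes a Lyapunov inequality on $\|(\nabla\u,\nabla\tau,\nabla\mathbf{G})\|_{H^2}^2$, recovering $\|\u\|_{L^2}$ afterwards from the damping and Gr\"onwall.
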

\begin{remark}
It is noteworthy that our analysis does not require the solution's integral mean to vanish (i.e., we do not impose the condition $ \int_{\T^d} \u_0(x) \, dx = 0$). This relaxation implies that the
 $L^2$ decay of the solution
$\u$ can still be established through
$L^2$-energy estimates without invoking the Poincar\'e's inequality, as the zero-mean constraint typically essential for applying Poincar\'e-type arguments becomes unnecessary in our framework.
\end{remark}

\begin{remark}
	The compressible case of the aforementioned system will be addressed in a subsequent paper.
\end{remark}

\vskip .1in
\subsection{Organization of the paper}
The remainder of this paper is organized as follows.
 Section 2 offers a concise review of foundational concepts, including the Littlewood-Paley decomposition, Besov spaces, and associated analytical frameworks. Section 3 focuses on the proof of Theorem \ref{theorem1.1}, systematically organized into five subsections:
\begin{enumerate}
  \item { Estimates for $ \mathbb{P}\tau_F$:} Deriving bounds for the projected stress term.
  \item { Estimates for$\mathbb{P}\bar{\tau}$:} Analyzing the modified stress component.
  \item { Low-frequency analysis:} Establishing decay properties for the solution pair
$ (\u,\tau)$.
  \item { High-frequency analysis:} Developing complementary estimates for high-frequency regimes using analogous energy arguments.
  \item { Synthesis:} Finalizing the proof through a continuity argument.
\end{enumerate}
The subsequent sections extend these results to Theorem \ref{theorem1.2}. Section 4 constructs the global existence theory through four methodical steps:
\begin{enumerate}
  \item Deriving the dissipation of the viscoelastic stress tensor.
  \item Deriving the dissipation of the  velocity field.
  \item Establishing the time-weighted energy estimates.
  \item  Finalizing the proof through a continuity argument.
\end{enumerate}
Section 5 rigorously investigates temporal decay dynamics.

\vskip .1in
\subsection{Notations}
Throughout the paper, $C > 0$ stands for a generic harmless ``constant''. For brevity,
we sometime write $f\lesssim g$ instead of $f \le Cg$.
 Let $A$, $B$ be two operators, we denote $[A, B] = AB - BA$, the commutator
between $A$ and $B$. Denote $\langle f,g\rangle$ the $L^2(\R^d)$ inner product of $f$ and $g$.  For $X$ a Banach space and $I$ an interval of $\mathbb{R}$, for any $f,g,h\in X$, we agree that
$\left\|\left(f,g\right)\right\| _{X}\stackrel{\mathrm{def}}{=} \left\|f\right\| _{X}
+\left\|g\right\|_{X}$
and denote by $C(I; X)$ the set of
continuous functions on $I$ with values in $X$.

\vskip .5in
\section{Preliminaries}

\subsection{Littlewood-Paley decomposition and Besov spaces}
Let us briefly recall the
Littlewood-Paley decomposition and Besov spaces for convenience. More details may be
found for example in Chap. 2 and Chap. 3 of \cite{bcd}.
\begin{definition}
\label{def2.1}
Considering two smooth functions $\varphi$ and $\chi$ on $ \R$ with the supports $supp \varphi \subset [ \frac{3}{4} ,\frac{8}{3} ]$ and  $supp \chi \subset [0, \frac{4}{3}]$ such that
\begin{align*}
& \sum_{k \in \Z} \varphi (2^{-k}\xi) = 1 \quad for \quad \xi > 0 \quad and \quad \chi (\xi) \stackrel{\mathrm{def}}{=} 1-\sum_{k \ge 0} \varphi (2^{-k}\xi) \quad for \quad \xi \in \R.
\end{align*}
Then we define homogeneous dyadic blocks $ \dot{\Delta}$
\begin{align*}
& \dot{\Delta}_k f = \mathscr{F}^{-1} (\varphi (2^{-k}|\xi|)\hat{f}) \quad and \quad \dot{S}_{k}f = \mathscr{F}^{-1} (\chi (2^{-k}|\xi|)\hat{f}).
\end{align*}
If A(D) is a 0 order Fourier multiplier, then we have
\begin{align*}
& \| \dot{\Delta}_k (A(D))f \|_{L^p} \le C\|  \ddk f \|_{L^p},  \qquad \forall \ p \in [1, \infty].
\end{align*}
\end{definition}

\begin{definition}
\label{def2.2}
Let $p,r \in [1, \infty]$, $ s \in \R$ and $ f \in \mathcal{S}'(\R^d)$. We define following Besov norm by
\begin{align*}
& \| f\|_{\dot{B}^s_{p,r}} \stackrel{\mathrm{def}}{=} \big\| (2^{ks}\| \dot{\Delta}_k f\|_{L^p})_k \big\|_{\ell^r(\Z)}
\end{align*}
and the Besov space as follows
\begin{align*}
& \dot{B}^s_{p,r}(\R^d) \stackrel{\mathrm{def}}{=}  \Big\{ f \in \mathcal{S}_{h}'(\R^d),  \big| \| f\|_{\dot{B}^s_{p,r}}<\infty  \Big\},
\end{align*}
where $\mathcal{S}_{h}'(\R^d)$ denotes $ f \in \mathcal{S}'(\R^d)$  and \ $ \lim_{k \rightarrow\infty} \| \dot{S}_{k}f\|_{L^{\infty}} = 0$.
\end{definition}

In this paper, we use the ``time-space" Besov spaces or Chemin-Lerner space introduced by Chemin and Lerner.
\begin{definition}
\label{chemin}
Let $s \in \R $ , and $0 < T \le +\infty$, we define
\begin{align*}
& \| f\|_{\widetilde{L}^{q}_{T}(\dot B^{s}_{p,r})} \stackrel{\mathrm{def}}{=}   \big\|2^{ks} \| \dot{\Delta}_k f\|_{L^q(0,T;L^p)}   \big\|_{\ell ^r}
\end{align*}
for $p,q \in [1, \infty]$ and with the standard modification for $p,q = \infty$.
\end{definition}
By Minkowski's inequality, we have the following inclusions between the Chemin-Lerner space $ \widetilde{L}^{q}_{T}(\dot B^{s}_{p,r})$ and the Bochner space $ L^{q}_{T}(\dot B^{s}_{p,r})$:
\begin{align*}
& \| f\|_{\widetilde{L}^{q}_{T}(\dot B^{s}_{p,r})} \le \| f\|_{L^{q}_{T}(\dot B^{s}_{p,r})} \quad \text{if} \ q \le r,\qquad \| f\|_{L^{q}_{T}(\dot B^{s}_{p,r})} \le \| f\|_{\widetilde{L}^{q}_{T}(\dot B^{s}_{p,r})} \quad \text{if} \ q \ge r.
\end{align*}

In order to prove the Theorem \ref{theorem1.1}, we also need to introduce the following weighted Chemin-Lerner-type space from \cite{chemin}.
\begin{definition}
	\label{Weight-Chemin}
	Let $ g(t) \in L^1_{loc}(\R^+), g(t) \ge 0$, we define
	\begin{align}
		\| f\|_{\title{L}^q_{T,g}(\dot{B}^s_{p,1})} = \sum_{k\in\Z} 2^{ks}\left(\int_{0}^{T} g(t)\| \dot{\Delta}_k f\|^q_{L^q} \, dt\right)^{\frac{1}{q}}
	\end{align}
	for $ s \in \R, p \in [1, \infty], q \in [1, \infty)$.
\end{definition}

\subsection{Analysis tools in Besov spaces}
Let us first recall classical Bernstein's lemma of Besov spaces.
\begin{lemma}\label{bernstein}
Let $\mathcal{B}$ be a ball and $\mathcal{C}$ a ring of $\mathbb{R}^d$. A constant $C$ exists so that for any positive real number $\lambda$, any
non-negative integer k, any smooth homogeneous function $\sigma$ of degree m, and any couple of real numbers $(p, q)$ with
$1\le p \le q\le\infty$, there hold
\begin{align*}
&\mathrm{Supp} \,\hat{u}\subset\lambda \mathcal{B}\Rightarrow\sup_{|\alpha|=k}\|{\partial^{\alpha}f}\|_{L^p}\le C^{k+1}\lambda^{k+d(\frac1p-\frac1q)}\|{f}\|_{L^p},\\
&\mathrm{Supp} \,\hat{f}\subset\lambda \mathcal{C}\Rightarrow C^{-k-1}\lambda^k\|{f}\|_{L^p}\le\sup_{|\alpha|=k}\|{\partial^{\alpha}f}\|_{L^p}
\le C^{k+1}\lambda^{k}\|{f}\|_{L^p},\\
&\mathrm{Supp} \,\hat{f}\subset\lambda \mathcal{C}\Rightarrow \|{\sigma(D)f}\|_{L^p}\le C_{\sigma,m}\lambda^{m+d(\frac1p-\frac1q)}\|{f}\|_{L^p},
\end{align*}
where $\hat{f}$ denotes the Fourier transform of $f$.
\end{lemma}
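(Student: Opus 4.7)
The plan is to prove Bernstein's lemma by Fourier-analytic convolution methods, reducing each assertion to an application of Young's convolution inequality after exploiting the spectral localisation via a fixed cutoff function and an appropriate rescaling.

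First I would pick once and for all a smooth cutoff $\phi\in C_c^\infty(\R^d)$ with $\phi\equiv 1$ on a neighbourhood of the ball $\mathcal{B}$ (respectively on a neighbourhood of the annulus $\mathcal{C}$ for the second and third assertions). If $\mathrm{Supp}\,\hat f\subset\lambda\mathcal{B}$, then $\hat f(\xi)=\phi(\xi/\lambda)\hat f(\xi)$, whence $f=h_\lambda\ast f$ with $h_\lambda(x)=\lambda^d\check\phi(\lambda x)$, where $\check\phi=\mathcal{F}^{-1}\phi$ belongs to the Schwartz class. Differentiating under the convolution yields $\partial^\alpha f=\lambda^{d+|\alpha|}(\partial^\alpha\check\phi)(\lambda\cdot)\ast f$. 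Young's inequality with $\tfrac{1}{q}+1=\tfrac{1}{r}+\tfrac{1}{p}$ together with the dilation identity $\|g(\lambda\cdot)\|_{L^r}=\lambda^{-d/r}\|g\|_{L^r}$ then gives
\begin{equation*}
\|\partial^\alpha f\|_{L^q}\lesssim \lambda^{|\alpha|+d(\frac{1}{p}-\frac{1}{q})}\|\partial^\alpha\check\phi\|_{L^r}\|f\|_{L^p},
\end{equation*}
which is exactly the first estimate with $C^{k+1}$ absorbing the combinatorial dependence on $|\alpha|=k$ through standard derivative bounds on the Schwartz function $\check\phi$.

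For the second (reverse) inequality, the key trick is that on the annulus $\mathcal{C}$ the function $\xi\mapsto |\xi|^{-2k}\xi^\alpha$ is smooth and compactly supported when multiplied by $\phi$. More precisely, for each multi-index $\alpha$ with $|\alpha|=k$ one writes $\hat f(\xi)=\sum_{|\alpha|=k}\psi_\alpha(\xi/\lambda)\,\lambda^{-k}\widehat{\partial^\alpha f}(\xi)$ for a suitable family $\psi_\alpha\in C_c^\infty$, so that $f$ is expressed as a finite sum of convolutions of $\partial^\alpha f$ with rescaled Schwartz kernels; Young's inequality with $p=q$ then produces $\|f\|_{L^p}\lesssim \lambda^{-k}\sup_{|\alpha|=k}\|\partial^\alpha f\|_{L^p}$, which rearranges to the stated bound. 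The third assertion (the $0$-order-shifted version with a homogeneous symbol $\sigma$ of degree $m$) reduces to the first: on the annulus $\lambda\mathcal{C}$ the symbol $\sigma(\xi)$ equals $\lambda^m \sigma(\xi/\lambda)\cdot\chi(\xi/\lambda)$ for a suitable cutoff $\chi$ supported on a slightly larger annulus, and the Fourier multiplier associated with $\sigma(\cdot)\chi$ is convolution with a fixed $L^1$ kernel; rescaling gives the factor $\lambda^{m+d(\frac{1}{p}-\frac{1}{q})}$.

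The only genuinely delicate point is tracking the dependence of the constant on $k$ so that it is at most geometric, i.e.\ $C^{k+1}$ rather than $k!$. This requires choosing the cutoff $\phi$ supported in a fixed annulus independent of $k$ and using that derivatives of a fixed Schwartz function $\check\phi$ satisfy $\|\partial^\alpha\check\phi\|_{L^1}\le C^{|\alpha|+1}$ by integration by parts against the Fourier side — not just any polynomial growth in $k$. Once this uniform control is secured, the three inequalities follow by combining spectral localisation, scaling, and Young's inequality as described, and the resulting constant is independent of $\lambda$, $f$, and the particular multi-index $\alpha$ with $|\alpha|=k$.
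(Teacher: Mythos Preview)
The paper does not give a proof of this lemma; it is simply stated as the classical Bernstein inequality and implicitly referred to the textbook \cite{bcd}. Your outline is the standard Fourier-analytic argument (cutoff, rescaling, Young's inequality) and is correct in substance, so there is no alternative approach in the paper to compare against.

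One small point worth tightening: your justification of the geometric constant $C^{k+1}$ via ``$\|\partial^\alpha\check\phi\|_{L^1}\le C^{|\alpha|+1}$ by integration by parts against the Fourier side'' is shaky as written. Integration by parts on the Fourier side to control the $L^1$ norm amounts to estimating $\|(1-\Delta_\xi)^N(\xi^\alpha\phi)\|_{L^1}$, and differentiating $\xi^\alpha$ can introduce factorial factors in $|\alpha|$. The clean way to obtain $C^{k+1}$ is to prove the case $k=1$ once (a fixed constant $C$) and then iterate, using that each $\partial_j f$ still has Fourier support in $\lambda\mathcal{B}$ (or $\lambda\mathcal{C}$); this immediately gives $(C\lambda)^k$ for the derivative gain, and a single additional application handles the $L^p\to L^q$ embedding. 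With that adjustment your argument is complete.
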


The following like-Bernstein inequality will be used frequently.
\begin{lemma}\label{like-bernstein}
If supp$ \hat{f} \subset \big\{ \xi \in \R^d : R_1 \lambda \le |\xi| \le R_2 \lambda\big\}$, then there exists C depending only on $ d$, $ R_1$ and $R_2$ such that for all $1<p<\infty$,
\begin{align*}
& C \lambda^2 (\frac{p-1}{p}) \int_{\R^d} |f|^p dx \le (p-1) \int_{\R^d}|\nabla f|^2 |f|^{p-2} dx = -\int_{\R^d}\Delta f |f|^{p-2}f dx.
\end{align*}
\end{lemma}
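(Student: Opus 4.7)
The identity is the straightforward part. Using the chain rule $\nabla(|f|^{p-2}f)=(p-1)|f|^{p-2}\nabla f$ (justified for $p>1$ by approximating the $C^1$ function $t\mapsto t|t|^{p-2}$) and integrating by parts, one obtains
\[
-\int_{\mathbb{R}^d}\Delta f\cdot|f|^{p-2}f\,dx=\int_{\mathbb{R}^d}\nabla f\cdot\nabla(|f|^{p-2}f)\,dx=(p-1)\int_{\mathbb{R}^d}|\nabla f|^2|f|^{p-2}\,dx.
\]
This simultaneously displays the middle and rightmost quantities as equal and manifestly non-negative, so the content of the lemma is the lower bound on this expression.

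To obtain the lower bound, the plan is to exploit the spectral localization of $f$ through the heat semigroup $u(t,x)=e^{t\Delta}f(x)$. First I would establish the decay estimate
\[
\|e^{t\Delta}f\|_{L^p}\le K\,e^{-c\lambda^2 t}\|f\|_{L^p},\qquad t\ge 0,
\]
with $K,c>0$ depending only on $d,R_1,R_2$. The idea is to choose a smooth cutoff $\phi$ equal to $1$ on the Fourier support of $f$ and compactly supported in a slightly larger annulus bounded away from the origin; then $e^{t\Delta}f=\mathcal{F}^{-1}\bigl(\phi(\xi/\lambda)e^{-t|\xi|^2}\bigr)*f$, and the rescaling $\xi=\lambda\eta$ reduces the $L^1$-norm estimate of the convolution kernel to a uniform bound on the Schwartz norms of $\eta\mapsto\phi(\eta)e^{-s|\eta|^2}$ for $s=t\lambda^2\ge 0$, with the exponential gain coming from the strictly positive lower bound on $|\eta|$ on the support of $\phi$.

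Differentiating $t\mapsto\|u(t)\|_{L^p}^p$ and integrating by parts exactly as in the equality step yields the dissipation identity
\[
\|f\|_{L^p}^p-\|u(T)\|_{L^p}^p=p(p-1)\int_0^T\!\!\int|\nabla u|^2|u|^{p-2}\,dx\,dt,
\]
and choosing $T=T_0/\lambda^2$ with $T_0$ large enough that $K^p e^{-cpT_0}\le\tfrac12$ forces the left-hand side to dominate $\tfrac12\|f\|_{L^p}^p$. The main obstacle is transferring this integrated bound into the pointwise inequality at $t=0$. I plan to show that $t\mapsto\int|\nabla u(t)|^2|u(t)|^{p-2}\,dx$ is non-increasing, equivalently that $t\mapsto\|u(t)\|_{L^p}^p$ is convex; this is the technical heart of the argument, proved by a second differentiation in which $\partial_tu=\Delta u$ and further integration by parts recasts $\frac{d^2}{dt^2}\|u\|_{L^p}^p$ as a sum of non-negative Bochner-type quadratic terms. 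Granted this monotonicity, the dissipation integrand at any $t\in[0,T_0/\lambda^2]$ is bounded above by its value at $t=0$, and the integrated bound collapses to $\tfrac12\|f\|_{L^p}^p\le p(p-1)(T_0/\lambda^2)\int|\nabla f|^2|f|^{p-2}\,dx$, which rearranges to the claim with $C=1/(2T_0)$.
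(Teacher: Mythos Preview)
The paper does not prove this lemma; it is stated without proof in Section~2 as a standard preliminary (the ``generalized Bernstein inequality,'' usually attributed to Danchin/Planchon and recorded for instance in Bahouri--Chemin--Danchin). So there is no proof in the paper to compare against.

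On the merits of your argument: the identity step and the heat-semigroup/dissipation setup are correct, and your reduction to the monotonicity of $h(t)=\int|\nabla u(t)|^2|u(t)|^{p-2}\,dx$ (equivalently, convexity of $t\mapsto\|e^{t\Delta}f\|_{L^p}^p$) is a natural route. The genuine gap is precisely at that convexity step, which you flag as the ``technical heart'' but do not actually carry out. If one performs the second differentiation and integrates by parts, what emerges (for $u>0$, say) is
\[
\frac{1}{p(p-1)}\frac{d^2}{dt^2}\|u\|_{L^p}^p
= 2\!\int|D^2u|^2u^{p-2} + 4(p-2)\!\int(\nabla u)^{\top}D^2u\,\nabla u\;u^{p-3} + (p-2)(p-3)\!\int|\nabla u|^4u^{p-4},
\]
and the middle integral has indefinite sign. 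For $1<p\le 2$ the crude bound $|(\nabla u)^{\top}D^2u\,\nabla u|\le|D^2u|\,|\nabla u|^2$ followed by Cauchy--Schwarz does close the estimate (the discriminant of the resulting quadratic form in $(\int|D^2u|^2u^{p-2})^{1/2}$ and $(\int|\nabla u|^4u^{p-4})^{1/2}$ equals $8(p-2)(p-1)<0$), but for $p>2$ that same discriminant is positive and the bound no longer forces non-negativity. Hence your assertion that $g''$ reduces to ``a sum of non-negative Bochner-type quadratic terms'' is not justified in the range $p>2$, and the proposal is incomplete there. The monotonicity you invoke may ultimately be provable, but it is not the one-line integration-by-parts fact you describe, and the standard references establish the lemma by routes that do not rely on it.
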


The following embedding inequality and interpolation inequality are also often used in this paper.
\begin{lemma}\label{embedding}
Let $ 1\le p, r, r_1, r_2 \le \infty$.
\begin{itemize}

  \item
  Complex interpolation: if $ f \in \dot{B}^{s_1}_{p, r_1} \cap \dot{B}^{s_2}_{p, r_1}(\R^d)$ and $ s_1 \neq s_2$, then $ f \in \dot{B}^{\ta s_1 + (1 - \ta)s_2}_{p, r}(\R^d)$ for all $ \ta \in (0, 1)$ and
\begin{align*}
\|f \|_{\dot{B}^{\ta s_1 + (1 - \ta)s_2}_{p, r}}\le C \| f\|^{\ta}_{\dot{B}^{s_1}_{p, r_1}} \|f \|^{1-\ta}_{\dot{B}^{s_2}_{p, r_1}}
\end{align*}
with $ \frac{1}{r} = \frac{\ta}{r_1} + \frac{1-\ta}{r_2}$.

  \item
Real interpolation: if $ f \in \dot{B}^{s_1}_{p, \infty} \cap \dot{B}^{s_2}_{p, \infty}(\R^d)$ and $ s_1 < s_2$, then $ f \in \dot{B}^{\ta s_1 + (1 - \ta)s_2}_{p, 1}(\R^d)$ for all $ \ta \in (0, 1)$ and
\begin{align*}
\|f \|_{\dot{B}^{\ta s_1 + (1 - \ta)s_2}_{p, 1}} \le \frac{C}{\ta(1-\ta)(s_2-s_1)} \| f\|^{\ta}_{\dot{B}^{s_1}_{p, \infty}} \|f \|^{1-\ta}_{\dot{B}^{s_2}_{p, \infty}}
\end{align*}
\item
Embedding: if $ s \in \R$, $ 1 \le p_1 \le p_2 \le \infty$ and $ 1 \le r_1 \le r_2 \le \infty$, then we have the continuous embedding $\dot{B}^s_{p_1,r_1} (\R^d)\hookrightarrow \dot{B}^{s-d(\frac{1}{p_1} - \frac{1}{p_2})}_{p_2,r_2}(\R^d)$.
\end{itemize}
\end{lemma}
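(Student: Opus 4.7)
The plan is to prove each bullet of the lemma separately, reducing every statement to a manipulation of the defining sequence $(2^{ks}\|\dot{\Delta}_k f\|_{L^p})_k$ combined with one elementary tool: Hölder for complex interpolation, a threshold split for real interpolation, and Bernstein (Lemma \ref{bernstein}) for the embedding. None of the three parts is genuinely deep; the goal is bookkeeping and getting the sharp constant in the real-interpolation bound.

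For the complex interpolation bound, I would start from the pointwise-in-$k$ factorisation
\begin{equation*}
2^{k(\theta s_1+(1-\theta)s_2)}\|\dot{\Delta}_k f\|_{L^p}
=\bigl(2^{ks_1}\|\dot{\Delta}_k f\|_{L^p}\bigr)^{\theta}\bigl(2^{ks_2}\|\dot{\Delta}_k f\|_{L^p}\bigr)^{1-\theta},
\end{equation*}
and then apply Hölder's inequality in $\ell^{r}(\Z)$ with the conjugate exponents $r/(\theta r_1)$ and $r/((1-\theta)r_2)$, which is legitimate precisely because $\tfrac{1}{r}=\tfrac{\theta}{r_1}+\tfrac{1-\theta}{r_2}$. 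Summing then gives the claimed inequality with a universal constant $C$.

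For the real interpolation statement, the trick is to split the dyadic sum at an integer $N$ to be optimised. Writing $a_k\stackrel{\mathrm{def}}{=}2^{k(\theta s_1+(1-\theta)s_2)}\|\dot{\Delta}_k f\|_{L^p}$ and bounding each factor in $\ell^{\infty}$, I get
\begin{equation*}
\sum_{k\le N}a_k\le \|f\|_{\dot{B}^{s_2}_{p,\infty}}\sum_{k\le N}2^{(1-\theta)(s_2-s_1)k}\cdot 2^{-(1-\theta)(s_2-s_1)k}\cdots,
\end{equation*}
so that after collecting the geometric series in the low-frequency regime (using $s_1<s_2$ so $2^{(s_1-s_2)k}$ decays as $k\to +\infty$) and in the high-frequency regime I obtain a bound of the form $C\bigl(2^{N\theta(s_1-s_2)}\|f\|_{\dot{B}^{s_2}_{p,\infty}}+2^{N(1-\theta)(s_2-s_1)}\|f\|_{\dot{B}^{s_1}_{p,\infty}}\bigr)$ divided by $(1-2^{-(s_2-s_1)\theta})(1-2^{-(s_2-s_1)(1-\theta)})$. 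Choosing $N$ so that the two terms balance gives the product form $\|f\|_{\dot{B}^{s_1}_{p,\infty}}^{\theta}\|f\|_{\dot{B}^{s_2}_{p,\infty}}^{1-\theta}$, and expanding $1-2^{-x}\ge c\,x$ for small $x$ produces the announced $\frac{1}{\theta(1-\theta)(s_2-s_1)}$ singular factor. This is the one place that requires a little care, and I expect it to be the main technical obstacle, since the naive argument loses one of the two reciprocal factors.

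For the embedding, I would apply the first implication of Bernstein's lemma to each dyadic block: since $\mathrm{supp}\,\widehat{\dot{\Delta}_k f}$ lies in an annulus of size $2^k$, Lemma \ref{bernstein} yields
\begin{equation*}
\|\dot{\Delta}_k f\|_{L^{p_2}}\le C\,2^{kd\bigl(\frac{1}{p_1}-\frac{1}{p_2}\bigr)}\|\dot{\Delta}_k f\|_{L^{p_1}}.
\end{equation*}
Multiplying by $2^{k(s-d(1/p_1-1/p_2))}$ reduces the $\dot{B}^{s-d(1/p_1-1/p_2)}_{p_2,r_2}$-norm to $\ell^{r_2}$ of $(2^{ks}\|\dot{\Delta}_k f\|_{L^{p_1}})_k$, and the standard inclusion $\ell^{r_1}\hookrightarrow \ell^{r_2}$ valid for $r_1\le r_2$ finishes the proof. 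Throughout, I would remark that only the definition of $\mathcal{S}'_h$ and the elementary dyadic structure are used, so the arguments extend verbatim to the inhomogeneous scale if needed.
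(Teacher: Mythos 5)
Your proposal is correct and is exactly the standard argument from Bahouri--Chemin--Danchin; the paper itself offers no proof of this lemma (it is quoted as a known result), so there is nothing to diverge from. All three parts are handled the right way: the pointwise factorisation plus H\"older in $\ell^r$ with exponents $r_1/\theta$ and $r_2/(1-\theta)$ (note you are implicitly correcting the paper's typo, which writes $r_1$ twice in the hypothesis while the exponent relation involves $r_2$), the threshold split with balancing for the real interpolation, and Bernstein plus $\ell^{r_1}\hookrightarrow\ell^{r_2}$ for the embedding. One intermediate display is garbled: in the low-frequency sum $\sum_{k\le N}a_k$ you pair the block with $\|f\|_{\dot B^{s_2}_{p,\infty}}$, which would give $a_k\le 2^{-k\theta(s_2-s_1)}\|f\|_{\dot B^{s_2}_{p,\infty}}$, a series that \emph{diverges} as $k\to-\infty$; the $\dot B^{s_1}_{p,\infty}$ norm must control the $k\le N$ range and the $\dot B^{s_2}_{p,\infty}$ norm the $k>N$ range. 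Your final collected bound $C\bigl(2^{N\theta(s_1-s_2)}\|f\|_{\dot B^{s_2}_{p,\infty}}+2^{N(1-\theta)(s_2-s_1)}\|f\|_{\dot B^{s_1}_{p,\infty}}\bigr)$ has the correct pairing, and choosing $N=\lfloor\log_2(\|f\|_{\dot B^{s_2}}/\|f\|_{\dot B^{s_1}})/(s_2-s_1)\rfloor$ balances the two terms with no loss from integrality, after which $1-2^{-x}\gtrsim\min(x,1)$ yields the factor $\tfrac{1}{\theta(1-\theta)(s_2-s_1)}$ as you say. So: fix the one display, but the proof is sound.
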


Next we recall a few nonlinear estimates in Besov spaces, we need para-differential decomposition of Bony in the homogeneous context:
\begin{align}
\label{bony}
& fg = \dot{T}_f g + \dot{T}_g f + \dot{R}(f,g),
\end{align}
where
\begin{align*}
& \dot{T}_f g \stackrel{\mathrm{def}}{=} \sum_{k\in \Z} \dot{S}_{k-1}f \dot{\Delta}_k g,\qquad \dot{R}(f,g) \stackrel{\mathrm{def}}{=} \sum_{k\in \Z} \dot{\Delta}_k f \tilde{\dot{\Delta}}_k g, \qquad \tilde{\dot{\Delta}}_k g \stackrel{\mathrm{def}}{=} \sum_{|k-k'| \le 1} \dot{\Delta}_{k'}g.
\end{align*}
The following lemma gives some classical properties of the paraproduct $\dot{T}$ and the remainder $\dot{R}$ operators.
\begin{lemma}(see \cite{bcd},\cite{zhaixiaoping2025})
\label{le2.5}
For all $s,s_1,s_2 \in \R$, $\sigma \ge 0$, and $1 \le p, p_1, p_2 \le \infty$, the paraproduct $\dot{T}$ is a bilinear, continuous operator from $ \dot B^{-\sigma}_{p_1,1} \times \dot B^{s}_{p_2,1}$ to $ \dot B^{s -\sigma}_{p,1}$ with $ \frac{1}{p} = \frac{1}{p_1} + \frac{1}{p_2}$. The remainder $\dot{R}$ is bilinear continuous from $ \dot B^{s_1}_{p_1,1} \times \dot B^{s_2}_{p_2,1}$ to $ \dot B^{s_1 + s_2}_{p,1}$ with $ s_1 + s_2 \ge 0$, and $ \frac{1}{p} = \frac{1}{p_1} + \frac{1}{p_2}$.
\end{lemma}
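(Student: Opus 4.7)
The plan is to combine Bony's para-differential decomposition with elementary Littlewood-Paley bookkeeping: both claims reduce to Hölder's inequality applied to each dyadic block, the spectral support of each summand controlling how many neighbouring blocks it can contribute to, and the resulting numerical sums being handled by geometric series and the $\ell^1$ summability built into the Besov norms $\dot B^{\,\cdot}_{\cdot,1}$.

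For the paraproduct $\dot T_f g=\sum_k \dot S_{k-1}f\,\dot\Delta_k g$, the first observation is that each summand has Fourier support in an annulus of scale $2^k$, because $\dot S_{k-1}f$ is Fourier-supported in a ball of radius $\sim 2^{k-1}$ while $\dot\Delta_k g$ lives in an annulus of scale $2^k$. Hence $\dot\Delta_j(\dot T_f g)=\sum_{|k-j|\le N_0}\dot\Delta_j(\dot S_{k-1}f\,\dot\Delta_k g)$ for a universal integer $N_0$. Hölder's inequality with $\frac1p=\frac1{p_1}+\frac1{p_2}$ and the $L^p$-boundedness of $\dot\Delta_j$ give $\|\dot\Delta_j(\dot T_f g)\|_{L^p}\lesssim \sum_{|k-j|\le N_0}\|\dot S_{k-1}f\|_{L^{p_1}}\|\dot\Delta_k g\|_{L^{p_2}}$. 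The auxiliary estimate $\|\dot S_{k-1}f\|_{L^{p_1}}\lesssim 2^{k\sigma}\|f\|_{\dot B^{-\sigma}_{p_1,1}}$ for $\sigma\ge 0$ follows by writing $\|\dot S_{k-1}f\|_{L^{p_1}}\le\sum_{j\le k-2}2^{j\sigma}\bigl(2^{-j\sigma}\|\dot\Delta_j f\|_{L^{p_1}}\bigr)$ and bounding the factor $2^{j\sigma}$ by $2^{k\sigma}$ (trivially when $\sigma=0$, and via a convergent geometric series when $\sigma>0$). Multiplying by $2^{j(s-\sigma)}$, using $|k-j|\le N_0$ to replace $2^{j(s-\sigma)}2^{k\sigma}$ by $2^{ks}$ up to a constant, and summing in $j$ concludes the paraproduct bound.

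For the remainder $\dot R(f,g)=\sum_k \dot\Delta_k f\,\tilde{\dot\Delta}_k g$, the Fourier support of each summand lies in a ball of radius $\sim 2^k$, so $\dot\Delta_j\dot R(f,g)$ receives contributions only from $k\ge j-N_0$. Hölder's inequality yields $\|\dot\Delta_j\dot R(f,g)\|_{L^p}\lesssim\sum_{k\ge j-N_0}\|\dot\Delta_k f\|_{L^{p_1}}\|\tilde{\dot\Delta}_k g\|_{L^{p_2}}$. Multiplying by $2^{j(s_1+s_2)}$, summing in $j$ and interchanging the order of summation yields $\sum_k \|\dot\Delta_k f\|_{L^{p_1}}\|\tilde{\dot\Delta}_k g\|_{L^{p_2}}\sum_{j\le k+N_0}2^{j(s_1+s_2)}$. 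When $s_1+s_2>0$, the inner geometric sum is controlled by $C\,2^{k(s_1+s_2)}$ and the resulting expression is estimated via $\sum_k a_k b_k\le \bigl(\sum_k a_k\bigr)\bigl(\sup_k b_k\bigr)$ by $\|f\|_{\dot B^{s_1}_{p_1,1}}\|g\|_{\dot B^{s_2}_{p_2,\infty}}\le \|f\|_{\dot B^{s_1}_{p_1,1}}\|g\|_{\dot B^{s_2}_{p_2,1}}$.

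The main obstacle lies at the endpoint $s_1+s_2=0$, where the geometric factor is trivial and $\sum_{j\le k+N_0}1$ diverges; the hypothesis $s_1+s_2\ge 0$ in the statement must therefore be read in practice as $s_1+s_2>0$ (or, alternatively, the target index $1$ must be relaxed to $\infty$), consistently with the standard formulation in \cite{bcd}. Apart from this well-known endpoint caveat, the argument amounts to a routine application of Bernstein-type inequalities combined with $\ell^1$ summation, and no new ideas are needed beyond those sketched above. Convergence of the series defining $\dot T$ and $\dot R$ in $\mathcal{S}_h'(\R^d)$ under the stated Besov hypotheses follows from Lemma \ref{bernstein} and the classical Littlewood-Paley convergence framework recalled in Chapter~2 of \cite{bcd}.
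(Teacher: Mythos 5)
Your proof is correct and is precisely the standard Littlewood--Paley argument from Bahouri--Chemin--Danchin, which is all the paper relies on here: Lemma \ref{le2.5} is stated with a citation and no proof is given in the text. Your endpoint remark is also well taken — at $s_1+s_2=0$ the remainder only maps into $\dot B^{0}_{p,\infty}$, so the hypothesis $s_1+s_2\ge 0$ in the statement should indeed be read as $s_1+s_2>0$ for the conclusion in $\dot B^{s_1+s_2}_{p,1}$ as written (the paper only ever invokes the lemma away from this endpoint, so nothing downstream is affected).
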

To deal with the nonlinear terms in this paper, we also need the
following product estimates in Besov spaces.
\begin{lemma}(see \cite{bcd})
\label{le2.6}
Let $ 1 \le p,q \le \infty$, $ s_1 \le \frac{d}{q}, s_2 \le d \min \Big\{ \frac{1}{p} ,\frac{1}{q}\Big\}$ and $ \frac{1}{r} = \frac{1}{r_1} + \frac{1}{r_2} \le 1$.
If $ s_1 + s_2 > d \max\Big\{ 0, \frac{1}{p} + \frac{1}{q} -1\Big\}$, then for $\forall (f, g) \in \dot B^{s_1}_{q,r_1}(\R^d) \times \dot B^{s_2}_{p,r_2}(\R^d)$, there  holds
\begin{align*}
\| fg\|_{\dot{B}^{s_1 + s_2 - \frac{d}{q}}_{p, r}} \le C \| f\|_{\dot{B}^{s_1}_{q, r_1}} \| g\|_{\dot{B}^{s_2}_{p, r_2}}.
\end{align*}
In addition, when $r =1$ and  $ s_1 + s_2 \ge 0$, one has
\begin{align*}
\| fg\|_{\dot{B}^{s_1 + s_2 - \frac{d}{p}}_{p, 1}} \le C \| f\|_{\dot{B}^{s_1}_{p, 1}} \| g\|_{\dot{B}^{s_2}_{p, 1}}.
\end{align*}
\end{lemma}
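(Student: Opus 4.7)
The plan is to establish Theorem \ref{theorem1.2} by coupling a standard local well-posedness in $H^3(\T^d)$ with a uniform a priori energy estimate closed via a continuity argument, and then to upgrade the global bound to exponential decay. Because the velocity equation of \eqref{2modle} carries no parabolic term, the central analytic task is to transfer the smoothing available for $\tau$ (through $-\Delta\tau$) into a coercive dissipation for $\u$. The scheme has three ingredients: (i) a baseline $H^3$ energy identity that yields dissipation of $\nabla\tau$ in $H^3$; (ii) a hypocoercive cross functional at derivative orders $\le 2$ that generates dissipation of $\nabla\u$ in $H^2$; and (iii) cubic nonlinear estimates, based on the algebra property of $H^3(\T^d)$ for $d\le 3$, which close the system under the smallness \eqref{X1}.

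For step (i), I would apply $\partial^k$ with $|k|\le 3$ to \eqref{2modle} and test with $\partial^k\u$, $\partial^k\tau$. The leading linear coupling cancels because $\tau$ is symmetric and $\Omega(\u)$ skew, giving $\langle\div\tau,\u\rangle+\langle D(\u),\tau\rangle=0$; the rotational contribution $\langle\tau\Omega-\Omega\tau,\tau\rangle$ vanishes by $\mathrm{tr}(AB)=\mathrm{tr}(BA)$; the stretching piece $\alpha(D(\u)\tau+\tau D(\u))$ together with the transport commutators $[\partial^k,\u\cdot\nabla]\u$ and $[\partial^k,\u\cdot\nabla]\tau$ produce only cubic remainders controlled by Kato--Ponce/Moser estimates. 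The outcome is
\begin{align*}
\tfrac12\tfrac{d}{dt}\|(\u,\tau)\|_{H^3}^2+\|\nabla\tau\|_{H^3}^2\lesssim\|(\u,\tau)\|_{H^3}\bigl(\|\nabla\u\|_{H^2}^2+\|\nabla\tau\|_{H^3}^2\bigr).
\end{align*}

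For step (ii), following the effective-unknown philosophy already used in the $\R^d$ part of the paper, I would introduce the Lyapunov cross term $\mathcal{L}(t):=-\eta\sum_{|k|\le 2}\langle\partial^k\u,\,\partial^k\p\div\tau\rangle$ with $\eta>0$ small. The $\u$-equation gives $\langle\partial_t\u,\p\div\tau\rangle=\|\p\div\tau\|_{L^2}^2-\langle\p(\u\cdot\nabla\u),\div\tau\rangle$; the $\tau$-equation combined with the identity $\p\div D(\u)=\tfrac12\Delta\u$, valid because $\div D(\u)=\tfrac12\Delta\u$ on the solenoidal sector, yields $\langle\u,\p\div\partial_t\tau\rangle=-\tfrac12\|\nabla\u\|_{L^2}^2$ plus lower-order linear and cubic terms. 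Summing over $|k|\le 2$ produces
\begin{align*}
\tfrac{d}{dt}\mathcal{L}+\eta\|\nabla\u\|_{H^2}^2\le 2\eta\|\p\div\tau\|_{H^2}^2+C\|(\u,\tau)\|_{H^3}\,\mathcal{D}(t),\qquad \mathcal{D}(t):=\|\nabla\tau\|_{H^3}^2+\|\nabla\u\|_{H^2}^2,
\end{align*}
in which the bad term $\|\p\div\tau\|_{H^2}^2\le\|\nabla\tau\|_{H^3}^2$ is absorbed into the stress dissipation from step (i) by taking $\eta$ small.

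Setting $\mathcal{E}(t):=\|(\u,\tau)\|_{H^3}^2+2\mathcal{L}(t)$, which is equivalent to $\|(\u,\tau)\|_{H^3}^2$ for $\eta$ small, the sum of (i) and (ii) gives $\frac{d}{dt}\mathcal{E}+c_1\mathcal{D}\le C\sqrt{\mathcal{E}}\,\mathcal{D}$; under \eqref{X1} the right-hand side is absorbed and a continuity argument promotes local existence to a global bound $\mathcal{E}(t)\le C\varepsilon^2$, from which the stated space-time integrability of $\nabla\tau$ and $\nabla\u$ follows. For the exponential decay \eqref{decay1}, a Poincaré-type inequality on $\T^d$, used in the mean-respecting way described in Remark 1.3, provides $\mathcal{D}(t)\gtrsim\mathcal{E}(t)$, and Gr\"onwall yields $\mathcal{E}(t)\lesssim\mathcal{E}(0)e^{-C_3 t}$. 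I expect the sharpest technical point to be the order-$2$ cross term: the contribution of $\p\div\Delta\tau$ in $\langle\partial^2\u,\partial^2\p\div\partial_t\tau\rangle$ costs one more derivative of $\tau$ than is naturally controlled by $\|\nabla\tau\|_{H^3}$, and the commutator $[\partial^k,\u\cdot\nabla]\u$ sits at the borderline of the available $\|\nabla\u\|_{H^2}$ dissipation; handling both forces a careful Gagliardo--Nirenberg/Moser bookkeeping in which the smallness hypothesis \eqref{X1} is used essentially to absorb the resulting cubic remainders into $\mathcal{D}$, particularly in $d=3$.
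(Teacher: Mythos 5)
Your proposal does not address the statement at hand. The statement to be proved is Lemma \ref{le2.6}, a product law in homogeneous Besov spaces: under the index conditions $s_1\le \frac dq$, $s_2\le d\min\{\frac1p,\frac1q\}$, $s_1+s_2>d\max\{0,\frac1p+\frac1q-1\}$, one must show
$\| fg\|_{\dot{B}^{s_1 + s_2 - \frac{d}{q}}_{p, r}} \le C \| f\|_{\dot{B}^{s_1}_{q, r_1}} \| g\|_{\dot{B}^{s_2}_{p, r_2}}$,
together with the endpoint variant for $r=1$, $s_1+s_2\ge 0$. What you have written instead is a proof sketch of Theorem \ref{theorem1.2} (global well-posedness and exponential decay on $\T^d$): energy identities in $H^3$, a hypocoercive cross functional $\mathcal{L}(t)$, Poincar\'e and Gr\"onwall. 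None of that bears on the bilinear estimate in the lemma, so as a proof of Lemma \ref{le2.6} the proposal is vacuous.

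For the record, the expected argument (the paper itself simply cites Bahouri--Chemin--Danchin and gives no proof) runs through Bony's decomposition \eqref{bony}, $fg=\dot T_f g+\dot T_g f+\dot R(f,g)$, combined with Lemma \ref{le2.5}: one bounds $\dot T_f g$ using the embedding $\dot B^{s_1}_{q,r_1}\hookrightarrow \dot B^{s_1-\frac dq}_{\infty,r_1}$ (here $s_1\le\frac dq$ guarantees the paraproduct indices are admissible), bounds $\dot T_g f$ symmetrically using $s_2\le d\min\{\frac1p,\frac1q\}$, and bounds the remainder $\dot R(f,g)$ using the strict inequality $s_1+s_2>d\max\{0,\frac1p+\frac1q-1\}$ to ensure the sum defining $\dot R$ converges in the target space; the second inequality of the lemma is the case $p=q$ with the borderline $s_1+s_2\ge0$ handled by the $\ell^1$ summation. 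If you intend to submit a proof of Lemma \ref{le2.6}, it must follow this paraproduct route (or an equivalent one); the energy-method material you wrote belongs to Section 4 and should be resubmitted against Theorem \ref{theorem1.2} instead.
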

%\begin{lemma}\label{le2.7}
%Let $ d \ge 2$ and $ 2\leq p \leq \min \Big\{4, \frac{2d}{d-2}\Big\}$, additionally, $ p \neq 4 $ if $ d = 2$. For any $ f \in \dot{B}^{\frac{d}{p}}_{p,1}(\R^d)$, $ g^\ell \in \dot{B}^{\frac{d}{2}-1}_{2,1}(\R^d)$ and $ g^h \in \dot{B}^{\frac{d}{p}-1}_{p,1}(\R^d)$, then we have
%\begin{align}
%\| (fg)^{\ell}\|_{\dot{B}^{\frac{d}{2}-1}_{2,1}} \le C \Big(\| g\|^\ell_{\dot{B}^{\frac{d}{2}-1}_{2,1}} + \| g\|^h_{\dot{B}^{\frac{d}{p}-1}_{p,1}}\Big)\| f\|_{\dot{B}^{\frac{d}{p}}_{p,1}}.
%\end{align}
%\end{lemma}

Finally, we introduce a classical commutator's estimate.
\begin{lemma}(see \cite{bcd})\label{commutator}
Let $ 1 \le p \le \infty$, $ -d \min \Big\{ \frac{1}{p}, 1- \frac{1}{p}\Big\} < s \le \frac{d}{p} +1$, for any $ g \in \dot{B}^s_{p,1}(\R^d)$ and $ \nabla f \in \dot{B}^{\frac{d}{p}}_{p,1}(\R^d)$, then we have
\begin{align*}
& \big\| [\dot{\Delta}_k , f \cdot \nabla]g\big\|_{L^p} \le C d_k 2^{-ks} \| \nabla f\|_{\dot{B}^{\frac{d}{p}}_{p,1}} \| g\|_{\dot{B}^s_{p,1}}.
\end{align*}
Where $ (d_k)_{k \in \Z}$ denotes a sequence such that $ \| (d_k)\|_{\ell^1} \le 1$.
\end{lemma}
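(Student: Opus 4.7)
The plan is to follow the classical Bony-decomposition strategy. Writing the commutator componentwise as $[\dot\Delta_k,f\cdot\nabla]g=[\dot\Delta_k,f^i]\partial_i g$ (with implicit summation over $i$) and applying the paraproduct-remainder decomposition \eqref{bony} to both $f^i\partial_i g$ and $f^i\partial_i\dot\Delta_k g$, the difference splits into three structurally distinct pieces:
\begin{align*}
I_k &= [\dot\Delta_k,\dot T_{f^i}]\partial_i g,\\
II_k &= \dot\Delta_k\dot T_{\partial_i g}f^i - \dot T_{\partial_i\dot\Delta_k g}f^i,\\
III_k &= \dot\Delta_k\dot R(f^i,\partial_i g) - \dot R(f^i,\partial_i\dot\Delta_k g).
\end{align*}
I would bound each of these in $L^p$ separately and then assemble the three $\ell^1$-summable sequences into a single $(d_k)$ with the common prefactor $2^{-ks}\|\nabla f\|_{\dot B^{d/p}_{p,1}}\|g\|_{\dot B^s_{p,1}}$.

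For $I_k$, quasi-orthogonality of Littlewood--Paley blocks restricts the paraproduct sum to $|k'-k|\le N_0$, giving $I_k=\sum_{|k'-k|\le N_0}[\dot\Delta_k,\dot S_{k'-1}f^i]\,\partial_i\dot\Delta_{k'}g$. Representing $\dot\Delta_k$ as convolution with the Schwartz kernel $2^{kd}h(2^k\cdot)$ and using a first-order Taylor expansion of $\dot S_{k'-1}f^i$ yields the standard commutator bound $\|[\dot\Delta_k,\dot S_{k'-1}f^i]u\|_{L^p}\lesssim 2^{-k}\|\nabla\dot S_{k'-1}f\|_{L^\infty}\|u\|_{L^p}$. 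Combining with the embedding $\dot B^{d/p}_{p,1}\hookrightarrow L^\infty$ (so that $\|\nabla\dot S_{k'-1}f\|_{L^\infty}\lesssim\|\nabla f\|_{\dot B^{d/p}_{p,1}}$), Bernstein's lemma applied to $\partial_i\dot\Delta_{k'}g$, and summing the $O(1)$ nearby frequencies, one obtains $\|I_k\|_{L^p}\lesssim c^{(1)}_k 2^{-ks}\|\nabla f\|_{\dot B^{d/p}_{p,1}}\|g\|_{\dot B^s_{p,1}}$ with $(c^{(1)}_k)\in\ell^1$.

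For $II_k$, each of the two paraproducts is spectrally localized on an annulus of size $2^k$, so Lemma \ref{le2.5} applies once one redistributes a derivative from $\partial_i g$ onto $f^i$ so that the resulting factor involves $\|\nabla f\|_{\dot B^{d/p}_{p,1}}$; the upper bound $s\le d/p+1$ enters here to guarantee that $\dot S_{k'-1}\partial_i g$ can be controlled in $L^\infty$ (or, at the endpoint, in $\dot B^{d/p}_{p,1}$ via embedding) with a prefactor compatible with $2^{-ks}$. The remainder piece $III_k$ is the most delicate and I expect it to be the principal obstacle: the lower bound $s>-d\min(1/p,1-1/p)$ is precisely the sharp threshold ensuring that the two high-high sums defining $\dot R(f^i,\partial_i g)$ and $\dot R(f^i,\partial_i\dot\Delta_k g)$ are individually $L^p$-convergent via Lemma \ref{le2.6}, while the constraint $s\le d/p+1$ keeps their telescoping difference summable with the sharp factor $2^{-ks}$. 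After careful bookkeeping of the indices and extraction of a common $2^{-ks}$ prefactor from the three pieces, the three $\ell^1$-summable sequences combine into a single $(d_k)$ with $\|d_k\|_{\ell^1}\le 1$, completing the proof.
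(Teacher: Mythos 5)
Your outline is correct and is essentially the standard argument: the paper does not prove this lemma but simply cites \cite{bcd}, and what you describe (Bony decomposition of the commutator into the paraproduct--commutator term handled by kernel representation plus first-order Taylor expansion, the two transposed-paraproduct terms where $s\le \frac{d}{p}+1$ enters, and the remainder terms where $s>-d\min\{\frac1p,1-\frac1p\}$ is needed) is precisely the proof of the corresponding commutator lemma in that reference. The only mild imprecision is that the second remainder sum $\dot R(f^i,\partial_i\dot\Delta_k g)$ is in fact a finite sum over $|k'-k|\lesssim 1$ rather than a genuinely infinite high-high sum, but this does not affect the argument.
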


\vskip .1in

\section{The proof of Theorem \ref{theorem1.1}}
We are going to  prove Theorem \ref{theorem1.1} by using energy argument. For clarity, the proof is divided into  two main steps. The first step is to establish the \textit{a priori} estimates of low and high frequency, and the second step is to prove Theorem \ref{theorem1.1} by using the continuous argument. To begin with, we give some notations. Let $\Lambda\stackrel{\mathrm{def}}{=}\sqrt{-\Delta}$ and
\begin{align*}
  \delta \stackrel{\mathrm{def}}{=} \Lambda^{-1} \div \mathbb{Q}\tau,\quad \mathbb{P} \bar{\tau}\stackrel{\mathrm{def}}{=}\mathbb{P} \tau-\mathbb{P} \tau_F,
\end{align*}
where  $\mathbb{P} \tau_F$ is the  global solution of the following equation $$\partial_t \mathbb{P}\tau_F  -  \Delta \mathbb{P} \tau_F + \p \tau_F   = 0 $$
with initial data $\mathbb{P}\tau_0$.

We also choose the weighted function
\begin{align*}
f(t) \stackrel{\mathrm{def}}{=} \| \mathbb{P} \tau_F\|_{\dot{B}^{\frac{d}{p}+2}_{p,1}}  + \| \mathbb{P} \tau_F\|_{\dot{B}^{\frac{d}{p}}_{p,1}}
\end{align*}
and, for some $\lambda > 0$,  define
\begin{align}
	\u_{\lambda} \stackrel{\mathrm{def}}{=} \u \cdot \exp\bigg\{- \lambda \int_{0}^{t} f(t')dt'\bigg\}.
\end{align}
Similar notations for $ \bar{\tau}_{\lambda}, \delta_{\lambda},  \G_{\lambda}$.
To simply the notations in the energy argument, we also define the following equality.
\begin{align*}%\label{}
 \mathcal E_\infty(t)\stackrel{\mathrm{def}}{=}&\| \u \|^\ell_{\widetilde{L}^{\infty}_t (\dot B^{\frac{d}{p} -1}_{p,1})} + \| \u \|^h_{\widetilde{L}^{\infty}_t (\dot B^{\frac{d}{p} + 1 }_{p,1})},\nonumber\\
\mathcal E_1^\lambda(t)\stackrel{\mathrm{def}}{=}&\| \u_{\lambda} \|^\ell_{{L}^{1}_t ({\dot B^{\frac{d}{p} + 1}_{p,1}})} + \| \u_{\lambda} \|^h_{{L}^{1}_t ({\dot B^{\frac{d}{p} + 1}_{p,1}})} + \| \mathbb{Q} \tau_{\lambda} \|^\ell_{{L}^{1}_t(\dot B^{\frac{d}{p} }_{p,1})}  \nn\\
&+ \| \mathbb{Q} \tau_{\lambda} \|^h_{{L}^{1}_t(\dot B^{\frac{d}{p} +2 }_{p,1})}  + \| \mathbb{P} \bar{\tau}_{\lambda} \|_{{L}^{1}_t(\dot B^{\frac{d}{p}  }_{p,1})} + \| \mathbb{P} \bar{\tau}_{\lambda} \|_{{L}^{1}_t(\dot B^{\frac{d}{p} +2 }_{p,1})}.
\end{align*}

In order to make full use of the structure of the system, we first applying the operators $\p, \q$ to
the first two equations of \eqref{modle} and then rewrite it into the following form
\begin{eqnarray}\label{reformulate}
\left\{\begin{aligned}
&\partial_t \u    - \mathbb{P} \div \tau =  \mathbf{N}^1,\\
& \partial_t \mathbb{Q}\tau  -  \Delta \mathbb{Q} \tau + \q \tau - \frac{\nabla \u}{2}  =  \mathbf{N}^2,\\
& \partial_t \mathbb{P}\tau  -  \Delta \mathbb{P} \tau + \p \tau  =  \mathbf{N}^3,
\end{aligned}\right.
\end{eqnarray}
where
\begin{align*}
&\mathbf{N}^1 = -\mathbb{P}(\u\cdot\nabla \u);\nn\\
&\mathbf{N}^2 = - \mathbb{Q}(\u\cdot\nabla\tau) - \mathbb{Q}(g_{\alpha}(\tau, \nabla \u));\nn\\
&\mathbf{N}^3 = \frac{(\nabla \u)^{\top}}{2}  - \mathbb{P}(\u\cdot\nabla\tau) - \mathbb{P}(g_{\alpha}(\tau, \nabla \u)).
\end{align*}

\subsection{Estimates for the incompressible part $ \mathbb{P}\tau_F$}\label{sub_pt}
In the first subsection, we shall solve the global wellposedness of the  following equation
\begin{eqnarray}\label{ptf}
\left\{\begin{aligned}
& \partial_t \mathbb{P}\tau_F  -  \Delta \mathbb{P} \tau_F + \p \tau_F   =  0\\
& \mathbb{P}\tau_F |_{t = 0} = \mathbb{P}\tau_0.
\end{aligned}\right.
\end{eqnarray}
The main result of this section is as follows.
\begin{lemma}\label{heat_kernel}
	Let $  s \in \R, T >0, 1 \le \sigma, p \le \infty$, and $ 1 \le q,r \le \infty$. $ \p \tau_F$ satisfies the equation \eqref{ptf}, then there holds
	\begin{align}\label{ptf_estimate}
	\| \p \tau_F\|_{\widetilde{\title{L}}^{q}_{T}(\dot{B}^{s + \frac{2}{q}}_{p, \sigma})} + \| \p \tau_F\|_{\widetilde{\title{L}}^{r}_{T}(\dot{B}^{s}_{p, \sigma})}  \le C \| \p \tau_0\|_{\dot{B}^{s}_{p, \sigma}}.
	\end{align}
	\begin{proof}
		Applying $ \ddk$ to \eqref{ptf} and multiplying by $ p | \dot{\Delta}_k \mathbb{P} \tau_F |^{p-2} \cdot \dot{\Delta}_k \mathbb{P} \tau_F$, then by Lemma \ref{like-bernstein}, we can get
		\begin{align*}
			\frac{d}{dt} \| \ddk \p \tau_F\|^p_{L^p} + c_p 2^{2k} \| \ddk \p \tau_F\|^p_{L^p} + p \| \ddk \p \tau_F\|^p_{L^p} \le 0.
		\end{align*}
		By simultaneously multiplying both sides of the equation above by $ \frac{1}{p \| \ddk \p \tau_F\|^{p-1}_{L^p}}$, we obtain
		\begin{align*}
			\frac{d}{dt} \| \ddk \p \tau_F\|_{L^p} + c_p 2^{2k} \| \ddk \p \tau_F\|_{L^p} +  \| \ddk \p \tau_F\|_{L^p} \le 0.
		\end{align*}
		By multiplying both sides of the equation above by $ e^{2^{2k}c_p t'}$, and then integrating over the interval from $0$ to $t$, we can obtain
		\begin{align}\label{ptf_1}
			\| \ddk \p \tau_F\|_{L^p} \lesssim e^{-2^{2k}c_p t} \| \ddk \p \tau_0\|_{L^p}.
		\end{align}
		Then, taking the $L^q$ norm of \eqref{ptf_1} with respect to $t$, we multiply both sides by $2^{ks + \frac{2k}{q}}$, there holds
		\begin{align*}
			2^{ks + \frac{2k}{q}} \| \ddk \p \tau_F\|_{L^q_T(L^p_x)} \lesssim 2^{ks} \| \ddk \p \tau_0\|_{L^p}.
		\end{align*}
		Next, taking the $ \ell^{\sigma}$ norm of the above equation with respect to $k$, we have
		\begin{align}\label{ptf2}
			\| \p \tau_F\|_{\widetilde{\title{L}}^{q}_{T}(\dot{B}^{s + \frac{2}{q}}_{p, \sigma})}  \lesssim \| \p \tau_0\|_{\dot{B}^{s}_{p, \sigma}}.
		\end{align}
		On the other hand, since $ 	\frac{d}{dt} \| \ddk \p \tau_F\|_{L^p} +  \| \ddk \p \tau_F\|_{L^p} \le 0$,  by a similar process based on equation \eqref{ptf2}, one can get that
		\begin{align}\label{ptf3}
			\| \p \tau_F\|_{\widetilde{\title{L}}^{r}_{T}(\dot{B}^{s}_{p, \sigma})}  \lesssim \| \p \tau_0\|_{\dot{B}^{s}_{p, \sigma}}.
		\end{align}
		The combination of \eqref{ptf2} and \eqref{ptf3} gives rise to \eqref{ptf_estimate}. Consequently, we complete the proof of the lemma.
	\end{proof}
\end{lemma}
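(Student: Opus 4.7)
\textbf{Proof proposal for Lemma \ref{heat_kernel}.} The plan is to run a Littlewood-Paley block-by-block energy estimate on \eqref{ptf}, solve the resulting scalar ODE, and then reassemble the two mixed-norm bounds. First I would apply the dyadic operator $\dot\Delta_k$ to \eqref{ptf}, multiply the resulting equation by $p|\dot\Delta_k \mathbb{P}\tau_F|^{p-2}\,\dot\Delta_k \mathbb{P}\tau_F$ and integrate in $x$. Since $\mathbb{P}$ is a Fourier multiplier of order zero that commutes with $\dot\Delta_k$, the damping term $\mathbb{P}\tau_F$ contributes exactly $p\|\dot\Delta_k \mathbb{P}\tau_F\|_{L^p}^p$, while the Laplacian term is handled through the Bernstein-type inequality of Lemma \ref{like-bernstein} and yields a lower bound of the form $c_p 2^{2k}\|\dot\Delta_k \mathbb{P}\tau_F\|_{L^p}^p$ (which uses the spectral localization in an annulus and requires $1<p<\infty$; the endpoint cases are recovered by approximation or by direct Fourier computation since the heat semigroup commutes with $\mathbb{P}$).

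Dividing by $\|\dot\Delta_k \mathbb{P}\tau_F\|_{L^p}^{p-1}$ converts the inequality into the linear ODE
\begin{align*}
\frac{d}{dt}\|\dot\Delta_k \mathbb{P}\tau_F\|_{L^p} + \bigl(c_p 2^{2k}+1\bigr)\|\dot\Delta_k \mathbb{P}\tau_F\|_{L^p} \le 0,
\end{align*}
and Gronwall (equivalently, multiplication by the integrating factor $e^{(c_p 2^{2k}+1)t}$) gives the pointwise-in-$t$ bound
\begin{align*}
\|\dot\Delta_k \mathbb{P}\tau_F(t)\|_{L^p} \lesssim e^{-(c_p 2^{2k}+1)t}\|\dot\Delta_k \mathbb{P}\tau_0\|_{L^p}.
\end{align*}

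From this single pointwise inequality I would extract the two target norms in parallel. Taking the $L^q(0,T)$ norm in time of the right-hand side produces the factor $\bigl(\int_0^T e^{-q c_p 2^{2k} t}\,dt\bigr)^{1/q} \lesssim 2^{-2k/q}$, so that after multiplying by $2^{k(s+2/q)}$ the factor of $2^{2k/q}$ is absorbed and only $2^{ks}$ remains in front of $\|\dot\Delta_k \mathbb{P}\tau_0\|_{L^p}$; taking the $\ell^\sigma$ norm in $k$ then delivers the first half of \eqref{ptf_estimate}. For the second half, I simply drop the $c_p 2^{2k}$ term and keep the $+1$ damping, which gives $\|\dot\Delta_k \mathbb{P}\tau_F(t)\|_{L^p}\lesssim e^{-t}\|\dot\Delta_k \mathbb{P}\tau_0\|_{L^p}$; the $L^r_T$ norm of $e^{-t}$ is bounded uniformly in $T$, and another $\ell^\sigma$ summation produces the bound in $\widetilde L^r_T(\dot B^s_{p,\sigma})$.

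There is really no serious obstacle here; the statement is a standard maximal regularity estimate for the damped heat semigroup, adapted to Chemin-Lerner spaces. The only two minor points to be careful about are: (i) ensuring that the constant implicit in Lemma \ref{like-bernstein} is independent of $k$ (which it is, because the lemma is phrased in a scale-invariant way for functions spectrally supported in $2^k\mathcal C$), and (ii) that the damping $+1$ and the dissipative $+c_p 2^{2k}$ can be used \emph{separately} in the two estimates, which justifies why one gains two derivatives in $L^1_T$ (from the Laplacian) while simultaneously getting full integrability in $L^\infty_T$ (from the $O(1)$ damping), yielding in particular the specific case $\|\mathbb{P}\tau_F\|_{\widetilde L^\infty_T(\dot B^{d/p}_{p,1})}+\|\mathbb{P}\tau_F\|_{L^1_T(\dot B^{d/p}_{p,1})}+\|\mathbb{P}\tau_F\|_{L^1_T(\dot B^{d/p+2}_{p,1})}\lesssim \|\mathbb{P}\tau_0\|_{\dot B^{d/p}_{p,1}}$ advertised in \eqref{free4}.
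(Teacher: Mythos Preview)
Your proposal is correct and follows essentially the same route as the paper: localize with $\dot\Delta_k$, run the $L^p$ energy estimate using Lemma~\ref{like-bernstein} to extract the $c_p2^{2k}$ gain from $-\Delta$, solve the resulting scalar ODE, and then use the heat decay $e^{-c_p2^{2k}t}$ for the $\widetilde L^q_T(\dot B^{s+2/q}_{p,\sigma})$ bound and the damping decay $e^{-t}$ for the $\widetilde L^r_T(\dot B^{s}_{p,\sigma})$ bound. Your additional remarks on the endpoint cases $p\in\{1,\infty\}$ and on the $k$-independence of the Bernstein constant are welcome clarifications not made explicit in the paper.
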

\subsection{Estimates for the incompressible part $ \mathbb{P} \bar{\tau}$}\label{sub_pt1}
On the basis of superposition principle, $ \mathbb{P} \tau = \mathbb{P}\tau_F + \mathbb{P} \bar{\tau}$, one can deduce that $\mathbb{P} \bar{\tau}$  satisfies the following equation
 \begin{eqnarray}\label{ptbar}
\left\{\begin{aligned}
& \partial_t \mathbb{P} \bar{\tau}  -  \Delta \mathbb{P} \bar{\tau} + \p \bar{\tau}  =  \mathbf{N}^3,\\
& \mathbb{P} \bar{\tau} |_{t = 0} = 0.
\end{aligned}\right.
\end{eqnarray}
 In this subsection, we are concerned with the
estimates of  $ \mathbb{P} \bar{\tau}$. More precisely, we have the following lemma.
\begin{lemma}\label{le_pt}
	Under the condition in Theorem \ref{theorem1.1}, there exists a constant C such that
	\begin{align}\label{pt}
	&\| \mathbb{P} \bar{\tau}_{\lambda} \|_{\widetilde{L}^{\infty}_{t}(\dot B^{\frac{d}{p}}_{p,1})} + \lambda \| \mathbb{P} \bar{\tau}_{\lambda} \|_{{L}^{1}_{t, f}(\dot B^{\frac{d}{p}}_{p,1})} +  \| \mathbb{P} \bar{\tau}_{\lambda} \|_{{L}^{1}_{t}(\dot B^{\frac{d}{p} }_{p,1} )} + \| \mathbb{P} \bar{\tau}_{\lambda} \|_{{L}^{1}_{t}(\dot B^{\frac{d}{p}+2}_{p,1} )}
	   \nn\\
	 & \quad  \le C\Big(\| \u_{\lambda} \|^\ell_{{L}^{1}_{t}(\dot B^{\frac{d}{p}+1}_{p, 1})} + \| \u_{\lambda} \|^h_{{L}^{1}_{t}(\dot B^{\frac{d}{p}+1}_{p,1})}+ \| \u_{\lambda} \|^\ell_{{L}^{1}_{t, f}(\dot B^{\frac{d}{p} - 1}_{p,1})} +   \| \u_{\lambda} \|^h_{{L}^{1}_{t, f}(\dot B^{\frac{d}{p} + 1}_{p,1})}\Big)  + C\ea\eb.
	\end{align}
\end{lemma}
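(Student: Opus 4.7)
The plan is to perform a dyadic $L^p$ energy estimate on \eqref{ptbar}, convert it into a scalar ODE for each Littlewood--Paley block, and then use the weighted Chemin--Lerner framework of Definition \ref{Weight-Chemin} to control the source $\mathbf{N}^3$. Concretely, apply $\dot\Delta_k$ to \eqref{ptbar}, pair with $|\dot\Delta_k\mathbb{P}\bar{\tau}|^{p-2}\dot\Delta_k\mathbb{P}\bar{\tau}$ and divide by $\|\dot\Delta_k\mathbb{P}\bar{\tau}\|_{L^p}^{p-1}$ exactly as in Lemma \ref{heat_kernel}; Lemma \ref{like-bernstein} together with the linear damping $+\mathbb{P}\bar{\tau}$ in \eqref{ptbar} produces
\begin{equation*}
\tfrac{d}{dt}\|\dot\Delta_k\mathbb{P}\bar{\tau}\|_{L^p}+(c_p 2^{2k}+1)\|\dot\Delta_k\mathbb{P}\bar{\tau}\|_{L^p}\le\|\dot\Delta_k\mathbf{N}^3\|_{L^p}.
\end{equation*}
Multiplying by $\exp(-\lambda\int_0^t f)$ turns this into
\begin{equation*}
\tfrac{d}{dt}\|\dot\Delta_k\mathbb{P}\bar{\tau}_\lambda\|_{L^p}+(c_p 2^{2k}+1)\|\dot\Delta_k\mathbb{P}\bar{\tau}_\lambda\|_{L^p}+\lambda f(t)\|\dot\Delta_k\mathbb{P}\bar{\tau}_\lambda\|_{L^p}\le\|\dot\Delta_k\mathbf{N}^3_\lambda\|_{L^p}.
\end{equation*}
Since $\mathbb{P}\bar{\tau}|_{t=0}=0$, integrating in $t$, multiplying by $2^{kd/p}$ and summing over $k\in\Z$ delivers all four quantities on the left of \eqref{pt} simultaneously (the factor $c_p 2^{2k}$ produces the $L^1_t(\dot B^{d/p+2}_{p,1})$-norm, the $+1$ produces the $L^1_t(\dot B^{d/p}_{p,1})$-norm, the $\lambda f(t)$ produces the weighted $L^1_{t,f}(\dot B^{d/p}_{p,1})$-norm, and the integrated time-derivative produces the $\widetilde L^\infty_t(\dot B^{d/p}_{p,1})$-norm), each bounded by $C\|\mathbf{N}^3_\lambda\|_{L^1_t(\dot B^{d/p}_{p,1})}$. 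The problem is thus reduced to estimating this source.

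For the source I would split $\mathbf{N}^3=\tfrac12(\nabla\u)^\top-\mathbb{P}(\u\cdot\nabla\tau)-\mathbb{P}(g_\alpha(\tau,\nabla\u))$ and further decompose $\tau=\mathbb{P}\tau_F+\mathbb{P}\bar{\tau}+\mathbb{Q}\tau$. The linear piece $\tfrac12(\nabla\u_\lambda)^\top$ contributes $\|\u_\lambda\|_{L^1_t(\dot B^{d/p+1}_{p,1})}$, which after a low/high split at $k_0$ reproduces the first two terms on the right of \eqref{pt}. The bilinear pieces not involving $\mathbb{P}\tau_F$, namely $\u\cdot\nabla\mathbb{P}\bar{\tau}$, $\u\cdot\nabla\mathbb{Q}\tau$ and the analogous contributions inside $g_\alpha$, are genuinely quadratic in the unknown; by the product rule of Lemma \ref{le2.6} applied with one factor placed in an unweighted $L^\infty_t$-based norm (controlled by $\ea$) and the other in a $\lambda$-weighted $L^1_t$-based norm (controlled by $\eb$), each of them is absorbed into $C\ea\eb$.

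The heart of the proof lies in the three pieces carrying the possibly large free stress $\mathbb{P}\tau_F$, namely $\u_\lambda\cdot\nabla\mathbb{P}\tau_F$ and the two terms $\mathbb{P}\tau_F\,\nabla\u_\lambda$, $\nabla\u_\lambda\,\mathbb{P}\tau_F$ inside $g_\alpha$. For these I apply Lemma \ref{le2.6} to get, e.g.,
\begin{equation*}
\|\u_\lambda\cdot\nabla\mathbb{P}\tau_F\|_{\dot B^{d/p}_{p,1}}\lesssim\|\u_\lambda\|_{\dot B^{d/p}_{p,1}}\|\mathbb{P}\tau_F\|_{\dot B^{d/p+1}_{p,1}},
\end{equation*}
and then use the interpolation $\|\mathbb{P}\tau_F\|_{\dot B^{d/p+1}_{p,1}}\lesssim\|\mathbb{P}\tau_F\|_{\dot B^{d/p}_{p,1}}^{1/2}\|\mathbb{P}\tau_F\|_{\dot B^{d/p+2}_{p,1}}^{1/2}\lesssim f(t)$ from Lemma \ref{embedding}. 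Splitting $\u_\lambda$ at frequency $k_0$ via the elementary bounds $\|\u_\lambda^\ell\|_{\dot B^{d/p}_{p,1}}\lesssim 2^{k_0}\|\u_\lambda^\ell\|_{\dot B^{d/p-1}_{p,1}}$ and $\|\u_\lambda^h\|_{\dot B^{d/p}_{p,1}}\lesssim 2^{-k_0}\|\u_\lambda^h\|_{\dot B^{d/p+1}_{p,1}}$, the time integral of $\|\u_\lambda\|_{\dot B^{d/p}_{p,1}}f(t)$ reproduces exactly the last two terms $\|\u_\lambda\|^\ell_{L^1_{t,f}(\dot B^{d/p-1}_{p,1})}+\|\u_\lambda\|^h_{L^1_{t,f}(\dot B^{d/p+1}_{p,1})}$ on the right of \eqref{pt}. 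The two $g_\alpha$-pieces carrying $\mathbb{P}\tau_F$ are symmetric.

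The main obstacle is precisely this last step: coupling the possibly large $\mathbb{P}\tau_F$ with $\u_\lambda$ without generating an uncontrolled prefactor $\|\mathbb{P}\tau_F\|_{\dot B^{d/p}_{p,1}}$. The Chemin--Lerner weight $\exp(-\lambda\int_0^t f)$ is engineered so that $f(t)$, which is linear in $\mathbb{P}\tau_F$ and has finite $L^1_t$-norm thanks to Lemma \ref{heat_kernel}, enters as an integration weight inside the norm of $\u_\lambda$; this rearrangement is what will ultimately let the exponential factor $\exp(-C_0\|\mathbb{P}\tau_0\|_{\dot B^{d/p}_{p,1}})$ in hypothesis \eqref{X0} be propagated through the later bootstrap argument.
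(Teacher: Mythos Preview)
Your proposal is correct and follows essentially the same route as the paper: a localized $L^p$ energy estimate on \eqref{ptbar} combined with Lemma \ref{like-bernstein} gives the scalar ODE inequality, the weight $\exp(-\lambda\int_0^t f)$ is inserted exactly as you describe, and after summing over $k$ the analysis reduces to bounding $\|\mathbf{N}^3_\lambda\|_{L^1_t(\dot B^{d/p}_{p,1})}$ by decomposing $\tau=\mathbb{P}\tau_F+\mathbb{P}\bar{\tau}+\mathbb{Q}\tau$, with the $\mathbb{P}\tau_F$--pieces producing the weighted terms $\|\u_\lambda\|^\ell_{L^1_{t,f}}+\|\u_\lambda\|^h_{L^1_{t,f}}$ and the remaining bilinear pieces landing in $\ea\eb$. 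The only cosmetic difference is that the paper multiplies the equation by the weight \emph{before} performing the $L^p$ pairing (your order is reversed but equivalent), and for the $g_\alpha$--term with $\mathbb{P}\tau_F$ the paper uses $\|\mathbb{P}\tau_F\|_{\dot B^{d/p}_{p,1}}\le f(t)$ directly rather than interpolating; neither affects the argument.
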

	\begin{proof}
		Applying $ \dot{\Delta}_k$ to \eqref{ptbar} and then multiplying the resulting equation by $ \exp\big\{- \lambda \int_{0}^{t} f(t')dt'\big\}$ imply that
		\begin{align}\label{ptbark}
		\partial_t \dot{\Delta}_k \mathbb{P} \bar{\tau}_{\lambda} + \lambda f(t) \dot{\Delta}_k \mathbb{P} \bar{\tau}_{\lambda} - \Delta \dot{\Delta}_k \mathbb{P} \bar{\tau}_{\lambda} + \ddk \p \bar{\tau} = \dot{\Delta}_k \mathbf{N}^3_{\lambda}.
		\end{align}
		Multiplying \eqref{ptbark} by $ p | \dot{\Delta}_k \mathbb{P} \bar{\tau}_{\lambda} |^{p-2} \cdot \dot{\Delta}_k \mathbb{P} \bar{\tau}_{\lambda}$ and integrating over $ \R^d$, on the basis of Lemma \ref{like-bernstein}, we have
		\begin{align}\label{ptk}
		&\frac{d}{dt} \| \dot{\Delta}_k \mathbb{P} \bar{\tau}_{\lambda}\|^p_{L^p} + p\lambda f(t) \| \dot{\Delta}_k \mathbb{P} \bar{\tau}_{\lambda}\|^p_{L^p} + 2^{2k} p \| \dot{\Delta}_k \mathbb{P} \bar{\tau}_{\lambda}\|^p_{L^p}  + \| \ddk \p\bar{\tau}\|^p_{L^p}\nn\\
 &\quad\lesssim \langle \dot{\Delta}_k \mathbf{N}^3_{\lambda}, p | \dot{\Delta}_k \mathbb{P} \bar{\tau}_{\lambda} |^{p-2} \cdot \dot{\Delta}_k \mathbb{P} \bar{\tau}_{\lambda}\rangle.
		\end{align}
		Thanks to the H\"oder inequality, there holds
		\begin{align}\label{ptbar_k}
		\frac{d}{dt} \| \dot{\Delta}_k \mathbb{P} \bar{\tau}_{\lambda}\|_{L^p} + \lambda f(t)\| \dot{\Delta}_k \mathbb{P} \bar{\tau}_{\lambda}\|_{L^p} + 2^{2k}  \| \dot{\Delta}_k \mathbb{P} \bar{\tau}_{\lambda}\|_{L^p} + \| \ddk \p\bar{\tau}\|_{L^p}\lesssim  \|\dot{\Delta}_k \mathbf{N}^3_{\lambda}\|_{L^p}.
		\end{align}
		Summing by frequency division and then integrating from $ 0$ to $ t$, we have
		\begin{align}\label{pt_lambda}
		\|\mathbb{P} \bar{\tau}_{\lambda}\|_{\widetilde{L}^{\infty}_{t}(\dot B^{\frac{d}{p}}_{p,1})} + \lambda \| \mathbb{P} \bar{\tau}_{\lambda} \|_{{L}^{1}_{t, f}(\dot B^{\frac{d}{p}}_{p,1})} +  \|\mathbb{P} \bar{\tau}_{\lambda}\|_{{L}^{1}_{t}(\dot B^{\frac{d}{p} }_{p,1} )}  + \|\mathbb{P} \bar{\tau}_{\lambda}\|_{{L}^{1}_{t}(\dot B^{\frac{d}{p} + 2}_{p,1} )} \lesssim \| \mathbf{N}^3_{\lambda} \|_{{L}^{1}_{t}(\dot B^{\frac{d}{p}}_{p,1})}.
		\end{align}
		Next, we bound terms in $ \mathbf{N}^3_{\lambda}$, For the first term of $ \mathbf{N}^3_{\lambda}$, applying Lemma \ref{embedding} gives
		\begin{align}\label{h31_lambda}
		\| \u_{\lambda} \|_{{L}^{1}_{t}(\dot B^{\frac{d}{p}+1}_{p,1})} &\le C\big(\| \u_{\lambda} \|^\ell_{{L}^{1}_{t}(\dot B^{\frac{d}{p}+1}_{p,1})} + \| \u_{\lambda} \|^h_{{L}^{1}_{t}(\dot B^{\frac{d}{p}+1}_{p,1})}\big).
		\end{align}
		For the second term and third term of $ \mathbf{N}^3_{\lambda}$, by using Lemmas \ref{le2.6} and  \ref{embedding}, we have
		\begin{align*}%\label{h32_lambda}
			\| \u \cdot \nabla \tau\|_{\dot B^{\frac{d}{p}}_{p,1}} &\lesssim \| \u \|_{\dot B^{\frac{d}{p} }_{p,1}} \| \tau\|_{\dot B^{\frac{d}{p}+1}_{p,1}}\nn\\
			& \lesssim (\| \u \|^\ell_{\dot B^{\frac{d}{p}-1 }_{p,1}} + \| \u \|^h_{\dot B^{\frac{d}{p}  +1}_{p,1}}) (\| \mathbb{Q} \tau \|^\ell_{\dot B^{\frac{d}{p}  }_{p,1}} + \| \mathbb{Q} \tau \|^h_{\dot B^{\frac{d}{p} +2 }_{p,1}} + \| \mathbb{P} \tau_F \|_{\dot B^{\frac{d}{p} +1 }_{p,1}} + \| \mathbb{P} \bar{\tau} \|_{\dot B^{\frac{d}{p}+1}_{p,1}})\nn\\
			&\lesssim (\| \u \|^\ell_{\dot B^{\frac{d}{p}-1 }_{p,1}} + \| \u \|^h_{\dot B^{\frac{d}{p}  +1}_{p,1}})( \| (\mathbb{P} \tau_F \|_{\dot B^{\frac{d}{p} }_{p,1}} + \| \mathbb{P} \tau_F \|_{\dot B^{\frac{d}{p}+2}_{p,1}})\nn\\
			&\quad+(\| \u \|^\ell_{\dot B^{\frac{d}{p}-1 }_{p,1}} + \| \u \|^h_{\dot B^{\frac{d}{p}  +1}_{p,1}})(\| \mathbb{Q} \tau \|^\ell_{\dot B^{\frac{d}{p}  }_{p,1}} + \| \mathbb{Q} \tau \|^h_{\dot B^{\frac{d}{p} +2 }_{p,1}} + \|  \p \bar{\tau} \|_{\dot B^{\frac{d}{p} }_{p,1}} + \| \mathbb{P} \bar{\tau} \|_{\dot B^{\frac{d}{p}+2}_{p,1}}),
		\end{align*}
		and
		\begin{align*}%\label{h33_lambda}
			\| g_{\alpha} (\tau, \nabla \u)\|_{\dot B^{\frac{d}{p}}_{p,1}} & \lesssim \| \u \|_{\dot B^{\frac{d}{p}+1 }_{p,1}} \| \tau\|_{\dot B^{\frac{d}{p}}_{p,1}}\nn\\
			& \lesssim (\| \u \|^\ell_{\dot B^{\frac{d}{p} -1}_{p,1}} + \| \u \|^h_{\dot B^{\frac{d}{p} +1 }_{p,1}}) (\| \mathbb{Q} \tau \|^\ell_{\dot B^{\frac{d}{p}  }_{p,1}} + \| \mathbb{Q} \tau \|^h_{\dot B^{\frac{d}{p} +2 }_{p,1}} + \| \mathbb{P} \tau_F \|_{\dot B^{\frac{d}{p}  }_{p,1}} + \| \mathbb{P} \bar{\tau} \|_{\dot B^{\frac{d}{p}}_{p,1}}).
		\end{align*}

Hence, according to the  definition of the weighted Chemin-Lerner norm, we further get
\begin{align}\label{jiaquanguji}
&\int_{0}^{t} \| (\u \cdot \nabla \tau)_{\lambda}\|_{\dot B^{\frac{d}{p}}_{p,1}} \,dt'
+\int_{0}^{t} \| (g_{\alpha} (\tau, \nabla \u))_{\lambda}\|_{\dot B^{\frac{d}{p}}_{p,1}} \,dt'
\nn\\
&\quad\lesssim\int_{0}^{t} (\| \u_{\lambda} \|^\ell_{\dot B^{\frac{d}{p}-1 }_{p,1}} + \| \u_{\lambda} \|^h_{\dot B^{\frac{d}{p}  +1}_{p,1}})( \| (\mathbb{P} \tau_F \|_{\dot B^{\frac{d}{p} }_{p,1}} + \| \mathbb{P} \tau_F \|_{\dot B^{\frac{d}{p}+2}_{p,1}})\,dt' \nn\\
&\qquad+\int_{0}^{t} (\| \u \|^\ell_{\dot B^{\frac{d}{p}-1 }_{p,1}} + \| \u \|^h_{\dot B^{\frac{d}{p}  +1}_{p,1}})(\| \mathbb{Q} \tau_{\lambda} \|^\ell_{\dot B^{\frac{d}{p}  }_{p,1}} + \| \mathbb{Q} \tau_{\lambda} \|^h_{\dot B^{\frac{d}{p} +2 }_{p,1}} + \|  \p \bar{\tau}_{\lambda} \|_{\dot B^{\frac{d}{p} }_{p,1}} + \| \mathbb{P} \bar{\tau}_{\lambda} \|_{\dot B^{\frac{d}{p}+2}_{p,1}})\,dt' \nn\\
&\quad\lesssim
 \| \u_{\lambda} \|^\ell_{{L}^{1}_{t, f}(\dot B^{\frac{d}{p} - 1}_{p,1})} +   \| \u_{\lambda} \|^h_{{L}^{1}_{t, f}(\dot B^{\frac{d}{p} + 1}_{p,1})}  + \ea\eb.
\end{align}

Inserting \eqref{h31_lambda} and \eqref{jiaquanguji} into \eqref{pt_lambda}, we can derive  \eqref{pt}. Consequently, we complete the proof of the lemma.
	\end{proof}

\subsection{Estimates of $(\u_{\lambda}, \delta_{\lambda})$ in Low-frequency}\label{sublow_fre}
In this subsection, we are concerned with the
estimates of $ (\u_{\lambda}, \delta_{\lambda})$ in low frequency.
First,   setting $$ \mathbf{v} = \Lambda^{-1} \p \div \tau,$$
 and then
 applying operator $ \Lambda^{-1} \p \div$ to the second equation of \eqref{modle},  we obtain the following new system:
\begin{align}\label{linear1}
\left\{\begin{aligned}
&\partial_t \u- \Lambda \mathbf{v} = \mathbf{H},\\
& \partial_t \mathbf{v}  + \Lambda^2 \mathbf{v}  + \mathbf{v}   + \frac{1}{2} \Lambda \u = \mathbf{M},\\
& \div \u = 0,
\end{aligned}\right.
\end{align}
where $$ \mathbf{H} = -\p (\u \cdot \nabla \u) \quad \text{and} \quad \mathbf{M} = -\Lambda^{-1} \p \div (\u \cdot \nabla \tau + g_{\alpha} (\tau, \nabla \u)).$$
Next, considering the linearized equations of  \eqref{linear1}, we have
\begin{align*}
\left\{\begin{aligned}
&\partial_t \u- \Lambda \mathbf{v} = 0,\\
& \partial_t \mathbf{v}  + \Lambda^2 \mathbf{v}  + \mathbf{v}   + \frac{1}{2} \Lambda \u = 0.
\end{aligned}\right.
\end{align*}
By applying the Fourier transform to the aforementioned linearized equations, we obtain
\begin{align}\label{linear}
	\frac{d}{d t}\binom{\widehat{\u}}{\widehat{\mathbf{v}}}=\mathbb{A}(\xi)\binom{\widehat{\u}}{\widehat{\mathbf{v}}} \quad \text { with } \quad \mathbb{A}(\xi)=\left(\begin{array}{cc}
	0  \mathbb{I}_d  &  |\xi|\mathbb{I}_d \\
	-\frac{1}{2}|\xi| \mathbb{I}_d & -\left(|\xi|^2+1\right) \mathbb{I}_d
	\end{array}\right) .
\end{align}
Moreover, a simple computation implies that
system \eqref{linear} has the following two eigenvalues $$
	\lambda_{ \pm}=\frac{-\left(|\xi|^2+1\right) \pm \sqrt{\left(|\xi|^2+1\right)^2-2 |\xi|^2}}{2} .
	$$
\begin{lemma}\label{low_fre}
	
	Let $\widehat{\mathbb{G}}_{\u, \mathbf{v}}(t, \xi)=e^{\mathbb{A}(\xi)t}$ be the Green's matrix for System \eqref{linear}. Then the solution of System \eqref{linear} with initial data $\left.(\widehat{\u}, \widehat{\mathbf{v}})\right|_{t=0}=\left(\widehat{\u}_0, \widehat{\mathbf{v}}_0\right)$ has the following exact representation:
	\begin{align}\label{linear2}
		\binom{\widehat{\u}}{\widehat{\mathbf{v}}}=\widehat{\mathbb{G}}_{\u, \mathbf{v}}(t, \xi)\binom{\widehat{\u}_0}{\widehat{\mathbf{v}}_0},
	\end{align}
	where
\begin{align}\label{G}
	\widehat{\mathbb{G}}_{\u, \mathbf{v}}(t, \xi)=\left(\begin{array}{cc}
	\mathcal{G}_3(t, \xi) \mathbb{I}_d & |\xi| \mathcal{G}_1(t, \xi) \mathbb{I}_d \\
	-\frac{1}{2}|\xi| \mathcal{G}_1(t, \xi) \mathbb{I}_d & \mathcal{G}_2(t, \xi) \mathbb{I}_d
	\end{array}\right)
\end{align}
	with
	$$
	\mathcal{G}_1(t, \xi)=\frac{e^{\lambda_{+} t}-e^{\lambda_{-} t}}{\lambda_{+}-\lambda_{-}}, \quad \mathcal{G}_2(t, \xi)=\frac{\lambda_{+} e^{\lambda_{+} t}-\lambda_{-} e^{\lambda_{-} t}}{\lambda_{+}-\lambda_{-}} , \quad\mathcal{G}_3(t, \xi)=\frac{\lambda_{+} e^{\lambda_{-} t}-\lambda_{-} e^{\lambda_{+} t}}{\lambda_{+}-\lambda_{-}}.
	$$
	
 Furthermore, let $\mathbb{C}$ be a ring centered at $0$ in $\mathbb{R}^d$, and if supp $\widehat{\phi} \subseteq$ $\theta \mathbb{C}$ for some constant $\theta>0$. For any $R>0$, there exist positive constants $C$ and $c$ depending on  $d$, such that if $\theta \leq R$, then for any $1 \leq p \leq 2d$,
 \begin{align}\label{low}
 	\left\| \check{\mathcal{G}}_i \ast \phi\right\|_{L^p} \leq C e^{-c \theta^2 t}\|\phi\|_{L^p}  \quad \text{for} \quad i = 1,2,3.
 \end{align}
 \begin{proof}
 	The proof of \eqref{linear2} is straightforward, while the detailed proof of \eqref{low} can be found in \cite{zi2024JDE}.
 \end{proof}
\end{lemma}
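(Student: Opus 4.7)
The lemma has two parts: the explicit Green's matrix representation \eqref{linear2}--\eqref{G} and the low-frequency $L^p$ decay bound \eqref{low}. For the first part, I would exploit the block structure of $\mathbb{A}(\xi)$: every entry is a scalar multiple of $\mathbb{I}_d$, so the system decouples componentwise into copies of the $2\times 2$ ODE governed by $A(\xi)=\begin{pmatrix} 0 & |\xi| \\ -|\xi|/2 & -(|\xi|^2+1)\end{pmatrix}$. Its characteristic polynomial is $\lambda^2+(|\xi|^2+1)\lambda + |\xi|^2/2$, whose roots are the stated $\lambda_\pm$. The discriminant $(|\xi|^2+1)^2 - 2|\xi|^2 = 1+|\xi|^4\ge 1$ is strictly positive, so $\lambda_+\neq\lambda_-$ and Sylvester's formula
$$e^{A(\xi)t} \;=\; \frac{\lambda_+ e^{\lambda_- t}-\lambda_- e^{\lambda_+ t}}{\lambda_+-\lambda_-}\,\mathbb{I}_2 \;+\; \frac{e^{\lambda_+ t}-e^{\lambda_- t}}{\lambda_+-\lambda_-}\,A(\xi)$$
applies. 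Substituting the entries of $A(\xi)$ delivers exactly the four symbols $\mathcal{G}_1,\mathcal{G}_2,\mathcal{G}_3$ appearing in \eqref{G}.

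For the decay estimate, I would first extract the exponential factor from a low-frequency expansion of the eigenvalues. Writing $\sqrt{(|\xi|^2+1)^2 - 2|\xi|^2}=\sqrt{1+|\xi|^4}=1+\tfrac12|\xi|^4+O(|\xi|^8)$ one gets
$$\lambda_+(\xi) = -\tfrac12|\xi|^2 + O(|\xi|^4), \qquad \lambda_-(\xi) = -1 + O(|\xi|^2),$$
so on the bounded region $\{|\xi|\le R\}$ there exist $c,c'>0$ depending only on $R$ and $d$ with $\mathrm{Re}\,\lambda_+\le -c|\xi|^2$ and $\mathrm{Re}\,\lambda_-\le -c'$. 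Because $\lambda_+-\lambda_-=\sqrt{1+|\xi|^4}\ge 1$, the denominators in the definitions of $\mathcal{G}_1,\mathcal{G}_2,\mathcal{G}_3$ never degenerate on this region, so each symbol admits the pointwise bound $|\mathcal{G}_i(t,\xi)|\le Ce^{-c|\xi|^2 t}$ for $|\xi|\le R$. Since $\mathrm{supp}\,\widehat\phi\subseteq\theta\mathcal{C}$ with $\theta\le R$ forces $|\xi|\gtrsim\theta$ on the spectrum of $\phi$, the pointwise bound converts into the required $Ce^{-c\theta^2 t}$ factor on that support.

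The remaining — and principal — obstacle is to upgrade the pointwise symbol control into a uniform $L^p\to L^p$ estimate for $1\le p\le 2d$. The natural route is to introduce a smooth cutoff $\widetilde\psi$ with $\widetilde\psi\equiv 1$ on $\mathcal{C}$ and compactly supported in a slightly larger annulus, so that
$$\check{\mathcal{G}}_i\ast\phi \;=\; \mathcal{F}^{-1}\!\bigl(\mathcal{G}_i(t,\cdot)\widetilde\psi(\cdot/\theta)\bigr)\ast\phi,$$
and then apply Young's convolution inequality, reducing the task to bounding the $L^1$ norm of the truncated kernel by $Ce^{-c\theta^2 t}$. After the rescaling $\xi\mapsto\theta\xi$ one has to estimate derivatives of $\mathcal{G}_i(t,\theta\cdot)\widetilde\psi$ uniformly in $\theta$ and $t$: the smoothness budget is constrained by the factor $|\xi|$ appearing in the off-diagonal entries of $\widehat{\mathbb{G}}_{\u,\v}$ and by the dimension, and this trade-off is precisely what produces the range $1\le p\le 2d$. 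This technical kernel computation is carried out in detail by Zi \cite{zi2024JDE}, whose argument I would invoke; the spectral and multiplier set-up above makes it applicable to the present decoupled system \eqref{linear}.
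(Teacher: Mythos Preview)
Your proposal is correct and follows the same approach as the paper: the paper dismisses \eqref{linear2} as ``straightforward'' and refers to \cite{zi2024JDE} for \eqref{low}, while you supply the Sylvester-formula computation behind the first part and sketch the spectral and multiplier set-up before likewise invoking \cite{zi2024JDE} for the kernel estimate. Your added detail is accurate and fills in what the paper leaves implicit.
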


Next, we use Lemma \ref{low_fre} to obtain the dissipation estimate for the low-frequency component of $(\u_{\lambda}, \delta_{\lambda})$. More precisely, we have the following lemma.
\begin{lemma}\label{le_ut_low}
Under the condition in Theorem \ref{theorem1.1}, there exists a constant $ C$ such that
\begin{align}\label{ut_low}
& \| \u_{\lambda}\|^\ell_{\widetilde{L}^{\infty}_{t}(\dot B^{\frac{d}{p} - 1}_{p,1})} + \|  \delta_{\lambda}\|^\ell_{\widetilde{L}^{\infty}_{t}(\dot B^{\frac{d}{p}}_{p,1})} + \lambda \| \u_{\lambda} \|^\ell_{{L}^{1}_{t, f}(\dot B^{\frac{d}{p} - 1}_{p,1})} + \lambda \| \delta_{\lambda} \|^\ell_{{L}^{1}_{t, f}(\dot B^{\frac{d}{p} }_{p,1})} +  \| \u_{\lambda}\|^\ell_{{L}^{1}_{t}(\dot B^{\frac{d}{p} + 1}_{p,1} )}+  \| \delta_{\lambda}\|^\ell_{{L}^{1}_{t}(\dot B^{\frac{d}{p} }_{p,1} )}  \nn\\
 &\quad\le C \Big(\| \u_0 \|^\ell_{\dot{B}^{\frac{d}{p}-1}_{p,1}} + \|  \delta_0 \|^\ell_{\dot{B}^{\frac{d}{p}}_{p,1}} +  \| \u_{\lambda} \|^\ell_{{L}^{1}_{t, f}(\dot B^{\frac{d}{p} - 1}_{p,1})} +  \| \u_{\lambda} \|^h_{{L}^{1}_{t, f}(\dot B^{\frac{d}{p} + 1}_{p,1})}\Big) +C\ea\eb.
\end{align}
\begin{proof}
	First, we localize System \eqref{linear1} as follow:
	\begin{align}\label{linear3}
	\left\{\begin{aligned}
	&\partial_t \dot{\Delta}_k \u- \Lambda \dot{\Delta}_k \mathbf{v} = \dot{\Delta}_k \mathbf{H},\\
	& \partial_t \dot{\Delta}_k \mathbf{v}  + \Lambda^2 \dot{\Delta}_k \mathbf{v}  + \dot{\Delta}_k \mathbf{v}   + \frac{1}{2} \Lambda \dot{\Delta}_k \u = \dot{\Delta}_k \mathbf{M},\\
	& (\dot{\Delta}_k \u, \dot{\Delta}_k \tau_0)|_{t = 0} = (\dot{\Delta}_k \u_0, \dot{\Delta}_k \tau_0).
	\end{aligned}\right.
	\end{align}
	Based on \eqref{linear2} in Lemma \ref{low_fre}, we can derive the solution expression for the linearized equations. Applying the Duhamel principle, we obtain:
	\begin{align}\label{low1}
			\dot{\Delta}_k \u= & \check{\mathcal{G}}_3 \ast \dot{\Delta}_k \u_0+  \Lambda \check{\mathcal{G}_1} \ast  \dot{\Delta}_k \mathbf{v}_0 \nn\\
		& +\int_0^t \check{\mathcal{G}_3}\left(t-t^{\prime}, \cdot \right) \ast \dot{\Delta}_k \mathbf{H} \, d t^{\prime}+  \int_0^t \Lambda \check{\mathcal{G}_1}\left(t-t^{\prime}, \cdot\right) \ast \dot{\Delta}_k \mathbf{M} \, d t^{\prime}.
	\end{align}
	Next, multiplying the above equation by $ e^{-\lambda \int_{0}^{t} f(t') \, d t'}$ yields that:
\begin{align}\label{equ_l}
	 \dot{\Delta}_k \u_{\lambda} = & e^{-\lambda \int_{0}^{t} f(t') \, d t'} \check{\mathcal{G}}_3 \ast \dot{\Delta}_k \u_0 +  e^{-\lambda \int_{0}^{t} f(t') \, d t'} \Lambda \check{\mathcal{G}_1} \ast  \dot{\Delta}_k \mathbf{v}_0 \nn\\
	 & + e^{-\lambda \int_{0}^{t} f(t') \, d t'} \Big[ \int_0^t \check{\mathcal{G}_3}\left(t-s, \cdot \right) \ast \dot{\Delta}_k \mathbf{H} \, d s+   \int_0^t \Lambda \check{\mathcal{G}_1}\left(t-s, \cdot\right) \ast \dot{\Delta}_k \mathbf{M} \, d s \Big].
\end{align}
Subsequently, we define  $U_k^\lambda (t)$ as follows:
\begin{align}\label{U}
	\mathbf{U}_k^\lambda (t) = \left \| \dot{\Delta}_k \u_{\lambda} \right\|_{L_t^{\infty}\left(L^p\right)} + \left \| \lambda f(t) \|\dot{\Delta}_k \u_{\lambda}\|_{L^p} \right\|_{L_t^{1}} + 2^{2 k}\left\|\dot{\Delta}_k \u_{\lambda}\right\|_{L_l^1\left(L^p\right)}.
\end{align}

For the first term in \eqref{U}, taking the $L^p$ norm with respect to the spatial variable $x$ and the supremum with respect to the time variable $t$, and then using  \eqref{low} of Lemma \ref{low_fre}, we obtain:
\begin{align}\label{linear4}
	 \| \dot{\Delta}_k \u_{\lambda} \|_{L_t^{\infty}\left(L^p\right)} & \lesssim   \| \dot{\Delta}_k \u_{0} \|_{L^p} + 2^k \| \dot{\Delta}_k \mathbf{v}_{0} \|_{L^p}\nn\\
	 & \quad + \int_0^t e^{-c\cdot 2^{2k}(t-s)} e^{-\lambda \int_{0}^{t} f(t') \, d t'} \cdot e^{\lambda \int_{0}^{s} f(s) \, d s} \cdot \|\dot{\Delta}_k \mathbf{H}_{\lambda}\|_{L^p} \, d s\nn\\
	 & \quad + \int_0^t 2^k e^{-c\cdot 2^{2k}(t-s)} e^{-\lambda \int_{0}^{t} f(t') \, d t'} \cdot e^{\lambda \int_{0}^{s} f(s) \, d s} \cdot \|\dot{\Delta}_k \mathbf{M}_{\lambda}\|_{L^p} \, d s \nn\\
	 & \lesssim \| \dot{\Delta}_k \u_{0} \|_{L^p} + 2^k \| \dot{\Delta}_k \mathbf{v}_{0} \|_{L^p} + \| \dot{\Delta}_k \mathbf{H}_{\lambda} \|_{L_t^{1}\left(L^p\right)} + 2^k \| \dot{\Delta}_k \mathbf{M}_{\lambda} \|_{L_t^{1}\left(L^p\right)}.
\end{align}
For the second term in  \eqref{U}, similarly, one can arrive at
\begin{align}\label{linear5}
	\left \| \lambda f(t) \|\dot{\Delta}_k \u_{\lambda}\|_{L^p} \right\|_{L_t^{1}} &\lesssim  \int_0^t \lambda f(s) e^{-\lambda \int_{0}^{s} f(t') \, d t'} e^{-c\cdot 2^{2k}s} \| \dot{\Delta}_k \u_{0} \|_{L^p} \, ds \nn\\
	& \quad + \int_0^t \lambda f(s) e^{-\lambda \int_{0}^{s} f(t') \, d t'} \cdot 2^k \cdot e^{-c\cdot 2^{2k}s} \| \dot{\Delta}_k \mathbf{v}_{0} \|_{L^p} \, ds \nn\\
	& \quad + \int_0^t \lambda f(s) e^{-\lambda \int_{0}^{s} f(t') \, d t'} \int_{0}^{s} e^{-c\cdot 2^{2k}(s-s')} \| \dot{\Delta}_k \mathbf{H}(s') \|_{L^p} \, ds' \, ds \nn\\
	& \quad  + \int_0^t \lambda f(s) e^{-\lambda \int_{0}^{s} f(t') \, d t'} \int_{0}^{s} 2^k \cdot e^{-c\cdot 2^{2k}(s-s')} \| \dot{\Delta}_k \mathbf{M}(s') \|_{L^p} \, ds' \, ds \nn\\
	& \stackrel{\mathrm{def}}{=} \sum_{j =1}^{4} I_j.
\end{align}
For the terms $I_1$ and $I_2$, we can directly obtain:
\begin{align}\label{I12}
	I_1 + I_2  &\lesssim \int_0^t \lambda f(s) e^{-\lambda \int_{0}^{s} f(t') \, d t'}   \, ds \Big(\| \dot{\Delta}_k \u_{0} \|_{L^p} + 2^k \| \dot{\Delta}_k \mathbf{v}_{0} \|_{L^p}\Big)\nn\\
	& \lesssim \| \dot{\Delta}_k \u_{0} \|_{L^p} + 2^k \| \dot{\Delta}_k \mathbf{v}_{0} \|_{L^p}
\end{align}
where we have used the fact $$ \int_0^t \lambda f(s) e^{-\lambda \int_{0}^{s} f(t') \, d t'}   \, ds \le 2.$$
For the third term $I_3$, by exchanging the order of integration, one can arrive at
\begin{align*}
	I_3 &\lesssim \int_0^t \int_{0}^{s} \lambda f(s) e^{-\lambda \int_{0}^{s} f(t') \, d t'}  \| \dot{\Delta}_k \mathbf{H}(s') \|_{L^p} \, ds' \, ds\nn\\
	& = \int_0^t \int_{0}^{s} \lambda f(s) e^{-\lambda \int_{s'}^{s} f(t') \, d t'}  \| \dot{\Delta}_k \mathbf{H}_{\lambda} \|_{L^p} \, ds' \, ds\nn\\
	& = \int_0^t \int_{s'}^{t} \lambda f(s) e^{-\lambda \int_{s'}^{s} f(t') \, d t'}  \| \dot{\Delta}_k \mathbf{H}_{\lambda} \|_{L^p}  \, ds \, ds'\nn\\
	& \lesssim \| \dot{\Delta}_k \mathbf{H}_{\lambda} \|_{L^1_t \left( L^p\right)}.
\end{align*}
Similarly, we can also obtain the estimate of $I_4$ as follows
\begin{align}\label{I4}
	I_4  \lesssim 2^k \| \dot{\Delta}_k \mathbf{M}_{\lambda} \|_{L^1_t \left( L^p\right)}.
\end{align}
Substituting \eqref{I12}--\eqref{I4} into \eqref{linear5} gives rise to
\begin{align}\label{linear6}
	\left \| \lambda f(t) \|\dot{\Delta}_k \u_{\lambda}\|_{L^p} \right\|_{L_t^{1}} \lesssim \| \dot{\Delta}_k \u_{0} \|_{L^p} + 2^k \| \dot{\Delta}_k \mathbf{v}_{0} \|_{L^p} + \| \dot{\Delta}_k \mathbf{H}_{\lambda} \|_{L^1_t \left( L^p\right)} + 2^k \| \dot{\Delta}_k \mathbf{M}_{\lambda} \|_{L^1_t \left( L^p\right)}.
\end{align}
For the last term in  \eqref{U}, similarly, one can arrive at
\begin{align}\label{linear7}
	2^{2 k}\left\|\dot{\Delta}_k \u_{\lambda}\right\|_{L_l^1\left(L^p\right)}  &\lesssim  \int_0^t 2^{2k} e^{-c\cdot 2^{2k}s} \| \dot{\Delta}_k \u_{0} \|_{L^p} \, ds \nn\\
	& \quad + \int_0^t   2^{3k}  \cdot e^{-c\cdot 2^{2k}s} \| \dot{\Delta}_k \mathbf{v}_{0} \|_{L^p} \, ds \nn\\
	& \quad + 2^{2k} \int_0^t  e^{-\lambda \int_{0}^{s} f(t') \, d t'} \int_{0}^{s} e^{-c\cdot 2^{2k}(s-s')} \| \dot{\Delta}_k \mathbf{H}(s') \|_{L^p} \, ds' \, ds \nn\\
	& \quad  + 2^{3k} \int_0^t  e^{-\lambda \int_{0}^{s} f(t') \, d t'} \int_{0}^{s}  \cdot e^{-c\cdot 2^{2k}(s-s')} \| \dot{\Delta}_k \mathbf{M}(s') \|_{L^p} \, ds' \, ds \nn\\
	& \lesssim \| \dot{\Delta}_k \u_{0} \|_{L^p} + 2^k \| \dot{\Delta}_k \mathbf{v}_{0} \|_{L^p} + \| \dot{\Delta}_k \mathbf{H}_{\lambda} \|_{L^1_t \left( L^p\right)} + 2^k \| \dot{\Delta}_k \mathbf{M}_{\lambda} \|_{L^1_t \left( L^p\right)}
\end{align}
where we have used the fact $$ \int_{0}^{t} 2^{2k} e^{-c\cdot 2^{2k}s} \, ds \le C.$$
Inserting \eqref{linear4}, \eqref{linear6} and \eqref{linear7} into \eqref{U}, we obtain:
\begin{align}\label{low2}
	\mathbf{U}_k^\lambda (t) \lesssim \| \dot{\Delta}_k \u_{0} \|_{L^p} + 2^k \| \dot{\Delta}_k \mathbf{v}_{0} \|_{L^p} + \| \dot{\Delta}_k \mathbf{H}_{\lambda} \|_{L^1_t \left( L^p\right)} + 2^k \| \dot{\Delta}_k \mathbf{M}_{\lambda} \|_{L^1_t \left( L^p\right)}.
\end{align}
Multiplying the above equation by $ 2^{(\frac{d}{p}-1)k}$ and then summing over $k$ for $k \le k_0$ yields that
\begin{align}\label{low3}
		&\|\u_{\lambda}\|^\ell_{\widetilde{L}^\infty_t(\dot{B}^{\frac{d}{p}-1}_{p,1})} + \lambda \| \u_{\lambda} \|^\ell_{{L}^{1}_{t, f}(\dot B^{\frac{d}{p} - 1}_{p,1})} + \int_{0}^{t} \| \u_{\lambda}\|^\ell_{\dot B^{\frac{d}{p} + 1}_{p,1}} \,dt' \nn\\
	 & \quad \lesssim  \| \u_0 \|^\ell_{\dot{B}^{\frac{d}{p}-1}_{p,1}} + \| \mathbf{v}_0 \|^\ell_{\dot{B}^{\frac{d}{p}}_{p,1}} + \int_{0}^{t} \| \mathbf{H}_{\lambda}\|^\ell_{\dot B^{\frac{d}{p} - 1}_{p,1}} \,dt'  + \int_{0}^{t} \| \mathbf{M}_{\lambda}\|^\ell_{\dot B^{\frac{d}{p}}_{p,1}} \,dt'.
\end{align}

On the other hand, we consider the compressible part of $\tau$, applying operator $\la^{-1} \div$ to the second equation of \eqref{reformulate} and using the definition $\delta = \la^{-1} \div \q \tau$ give rise to
\begin{align}\label{eq_delta}
	\partial_t \delta - \Delta  \delta +\delta + \frac{1}{2} \Lambda \u  =   \Lambda^{-1} \div \mathbf{N}^2.
\end{align}
Applying $\ddk\exp\{- \lambda \int_{0}^{t} f(t')dt'\}$ to the above equation
and multiplying  \eqref{du_k} by $  | \dot{\Delta}_k \delta_{\lambda} |^{p-2} \cdot \dot{\Delta}_k \delta_{\lambda}$  and integration by parts over $ \R^d$, one can get
\begin{align}\label{deltak_l}
		&\frac{1}{p} \frac{d}{dt} \| \dot{\Delta}_k \delta_{\lambda}\|^p_{L^p} + \lambda f(t) \| \dot{\Delta}_k \delta_{\lambda}\|^p_{L^p} + \| \nabla \dot{\Delta}_k \delta_{\lambda} \|^p_{L^p} + \| \ddk \delta_{\lambda}\|^p_{L^p} \nn\\
		&\quad = - \frac{1}{2} \int_{\R^d}   \dot{\Delta}_k \Lambda \u_{\lambda} \cdot |\dot{\Delta}_k \delta_{\lambda}|^{p-2} \cdot \dot{\Delta}_k \delta_{\lambda} \,dx  + \int_{\R^d}   \Lambda^{-1} \div \dot{\Delta}_k \mathbf{N}^2_{\lambda} \cdot |\dot{\Delta}_k \delta_{\lambda}|^{p-2} \cdot \dot{\Delta}_k \delta_{\lambda} \,dx.
\end{align}
Using H\"older's inequality, we can obtain
\begin{align}\label{low4}
	\frac{d}{dt} \| \dot{\Delta}_k \delta_{\lambda}\|_{L^p} + \lambda f(t) \| \dot{\Delta}_k \delta_{\lambda}\|_{L^p} + 2^{2k} \| \dot{\Delta}_k \delta_{\lambda} \|_{L^p} + \| \ddk \delta_{\lambda}\|_{L^p}  \lesssim    2^k \| \dot{\Delta}_k \u_{\lambda} \|_{L^p} +   \|\dot{\Delta}_k \mathbf{N}^2_{\lambda}\|_{L^p}.
\end{align}
Multiplying the above equation by $ 2^{\frac{d}{p}k}$,  taking the summation of $k \le k_0$,  and then integrating from $0$ to $t$ yields
\begin{align}\label{low5}
		&\|\delta_{\lambda}\|^\ell_{\widetilde{L}^\infty_t(\dot{B}^{\frac{d}{p}}_{p,1})} + \lambda \| \delta_{\lambda} \|^\ell_{{L}^{1}_{t, f}(\dot B^{\frac{d}{p}}_{p,1})} + \int_{0}^{t} \| \delta_{\lambda}\|^\ell_{\dot B^{\frac{d}{p} + 2}_{p,1}} + \| \delta_{\lambda}\|^\ell_{\dot B^{\frac{d}{p}}_{p,1}} \,dt' \nn\\
	& \quad \lesssim  \| \delta_0 \|^\ell_{\dot{B}^{\frac{d}{p}}_{p,1}} + \int_{0}^{t} \| \u_{\lambda}\|^\ell_{\dot B^{\frac{d}{p} + 1}_{p,1}} \,dt'  + \int_{0}^{t} \| \mathbf{N}^2_{\lambda}\|^\ell_{\dot B^{\frac{d}{p}}_{p,1}} \,dt'.
\end{align}
Multiplying \eqref{low3} by an appropriate constant $C$ and adding it to \eqref{low5}, one can arrive at
\begin{align}\label{udelta_l}
	&\| \u_{\lambda}  \|^\ell_{\widetilde{L}^\infty_t(\dot{B}^{\frac{d}{p}-1}_{p,1})} + \|\delta_{\lambda}\|^\ell_{\widetilde{L}^\infty_t(\dot{B}^{\frac{d}{p}}_{p,1})} + \lambda \Big(\| \u_{\lambda}\|^\ell_{{L}^{1}_{t, f}(\dot B^{\frac{d}{p} - 1}_{p,1})} +  \| \delta_{\lambda} \|^\ell_{{L}^{1}_{t, f}(\dot B^{\frac{d}{p}}_{p,1})}\Big) + \int_{0}^{t}( \| \u_{\lambda}\|^\ell_{\dot B^{\frac{d}{p} + 1}_{p,1}} + \|  \delta_{\lambda}\|^\ell_{\dot B^{\frac{d}{p}}_{p,1}}) \,dt'\nn\\
	&\quad \lesssim  \| \u_0 \|^\ell_{\dot{B}^{\frac{d}{p}-1}_{p,1}} + \| \delta_0 \|^\ell_{\dot{B}^{\frac{d}{p}}_{p,1}} + \int_{0}^{t} \| \mathbf{N}^1_{\lambda}\|^\ell_{\dot B^{\frac{d}{p} - 1}_{p,1}} \,dt' + \int_{0}^{t} \| \mathbf{N}^2_{\lambda}\|^\ell_{\dot B^{\frac{d}{p}}_{p,1}} \,dt'
\end{align}
where we have used the fact $ \| \Lambda \mathbf{M}_{\lambda}\|^\ell_{\dot B^{\frac{d}{p}}_{p,1}} \lesssim \|  \mathbf{N}^2_{\lambda}\|^\ell_{\dot B^{\frac{d}{p} }_{p,1}}$ and $ \| \mathbf{v}_0 \|^\ell_{\dot{B}^{\frac{d}{p}}_{p,1}} \le \| \delta_0 \|^\ell_{\dot{B}^{\frac{d}{p}}_{p,1}}.$

Next we need to deal with the nonlinear terms in \eqref{udelta_l}. At first, applying Lemma \ref{le2.6} and Lemma \ref{embedding} gives rise to
\begin{align}\label{udu_l}
	\| \u \cdot \nabla \u\|^\ell_{\dot B^{\frac{d}{p} - 1}_{p,1}} &\lesssim (\|\nabla \u \|^\ell_{\dot B^{\frac{d}{p} - 1}_{p,1}} + \|\nabla \u \|^h_{\dot B^{\frac{d}{p} - 1}_{p,1}}) \| \u\|_{\dot B^{\frac{d}{p}}_{p,1}}\nn\\
	  &\lesssim   \| \u \|_{\dot B^{\frac{d}{p} }_{p,1}}^2
	 \lesssim \| \u \|^\ell_{\dot B^{\frac{d}{p} -1}_{p,1}} \| \u \|^\ell_{\dot B^{\frac{d}{p} +1}_{p,1}} + \| \u \|^h_{\dot B^{\frac{d}{p} +1 }_{p,1}} \| \u \|^h_{\dot B^{\frac{d}{p} + 1}_{p,1}}
\end{align}
which implies
\begin{align}\label{h1_l}
	\| \mathbf{N}^1_{\lambda} \|^\ell_{{L}^{1}_t ({\dot B^{\frac{d}{p} - 1}_{p,1}})} \lesssim \| \u\|^\ell_{\widetilde{L}^{\infty}_t ({\dot B^{\frac{d}{p} - 1}_{p,1}})} \| \u_{\lambda} \|^\ell_{{L}^{1}_t ({\dot B^{\frac{d}{p} + 1}_{p,1}})} + \| \u\|^h_{\widetilde{L}^{\infty}_t ({\dot B^{\frac{d}{p} + 1}_{p,1}})} \| \u_{\lambda} \|^h_{{L}^{1}_t ({\dot B^{\frac{d}{p} + 1}_{p,1}})}.
\end{align}
Noticing the fact that operator $\la^{-1}\div $ is a zero order Fourier multiplier, we infer from Lemma \ref{le2.6} again that
\begin{align*}%\label{udt_l}
	&\| \Lambda^{-1} \div (\u \cdot \nabla \tau )\|^\ell_{\dot B^{\frac{d}{p} - 1}_{p,1}}
\nn\\&\quad\lesssim (\| \u \|^\ell_{\dot B^{\frac{d}{p} -1}_{p,1}} + \| \u \|^h_{\dot B^{\frac{d}{p} + 1 }_{p,1}}) \| \tau \|_{\dot B^{\frac{d}{p} +1 }_{p,1}}\nn\\
	& \quad\lesssim (\| \u \|^\ell_{\dot B^{\frac{d}{p} -1}_{p,1}} + \| \u \|^h_{\dot B^{\frac{d}{p} + 1 }_{p,1}}) (\| \mathbb{Q} \tau \|^\ell_{\dot B^{\frac{d}{p} +1 }_{p,1}} + \| \mathbb{Q} \tau \|^h_{\dot B^{\frac{d}{p} +2 }_{p,1}} + \| \mathbb{P} \tau_F \|_{\dot B^{\frac{d}{p} +1 }_{p,1}} + \| \mathbb{P} \bar{\tau} \|_{\dot B^{\frac{d}{p} +1 }_{p,1}})\nn\\
	& \quad\lesssim  (\| \u \|^\ell_{\dot B^{\frac{d}{p} -1}_{p,1}} + \| \u \|^h_{\dot B^{\frac{d}{p} + 1 }_{p,1}}) \| \mathbb{P} \tau_F \|_{\dot B^{\frac{d}{p} +1 }_{p,1}}\nn\\
	& \qquad +(\| \u \|^\ell_{\dot B^{\frac{d}{p} -1}_{p,1}} + \| \u \|^h_{\dot B^{\frac{d}{p} + 1 }_{p,1}}) (\| \mathbb{Q} \tau \|^\ell_{\dot B^{\frac{d}{p} }_{p,1}} + \| \mathbb{Q} \tau \|^h_{\dot B^{\frac{d}{p} +2 }_{p,1}}  + \| \mathbb{P} \bar{\tau} \|_{\dot B^{\frac{d}{p} +1 }_{p,1}}) \nn\\
&\quad\lesssim (\| \u \|^\ell_{\dot B^{\frac{d}{p} -1}_{p,1}} + \| \u \|^h_{\dot B^{\frac{d}{p} + 1 }_{p,1}})( \| \mathbb{P} \tau_F \|_{\dot B^{\frac{d}{p} }_{p,1}} + \| \mathbb{P} \tau_F \|_{\dot B^{\frac{d}{p}+2}_{p,1}})\nn\\
			&\qquad+(\| \u \|^\ell_{\dot B^{\frac{d}{p} -1}_{p,1}} + \| \u \|^h_{\dot B^{\frac{d}{p} + 1 }_{p,1}}) (\| \mathbb{Q} \tau \|^\ell_{\dot B^{\frac{d}{p}  }_{p,1}} + \| \mathbb{Q} \tau \|^h_{\dot B^{\frac{d}{p} +2 }_{p,1}} + \|  \p \bar{\tau} \|_{\dot B^{\frac{d}{p} }_{p,1}} + \| \mathbb{P} \bar{\tau} \|_{\dot B^{\frac{d}{p}+2}_{p,1}}).
\end{align*}

Hence, we get by a similar derivation of \eqref{jiaquanguji} that
\begin{align}\label{udtlambda_l}
	&\int_{0}^{t} \| \Lambda^{-1} \div (\u \cdot \nabla \tau )_{\lambda}\|^\ell_{\dot B^{\frac{d}{p} - 1}_{p,1}} \,dt' \lesssim \| \u_{\lambda} \|^\ell_{{L}^{1}_{t, f}(\dot B^{\frac{d}{p} - 1}_{p,1})} + \| \u_{\lambda} \|^h_{{L}^{1}_{t, f}(\dot B^{\frac{d}{p} + 1}_{p,1})}+\ea\eb.
\end{align}
For the last term in $\la^{-1} \div \mathbf{N}^2_{\lambda}$, we infer from Lemma \ref{le2.6} that
\begin{align*}%\label{tdu_l}
	&\| \Lambda^{-1} \div g_{\alpha}(\tau ,\nabla \u )\|^\ell_{\dot B^{\frac{d}{p} }_{p,1}} \nn\\
	&\quad \lesssim \| \Lambda^{-1} \div g_{\alpha}(\tau ,\nabla \u )\|^\ell_{\dot B^{\frac{d}{p} - 1}_{p,1}} \nn\\
	& \quad\lesssim (\| \u \|^\ell_{\dot B^{\frac{d}{p}-1 }_{p,1}} + \| \u \|^h_{\dot B^{\frac{d}{p}+1  }_{p,1}}) (\| \mathbb{Q} \tau \|^\ell_{\dot B^{\frac{d}{p}  }_{p,1}} + \| \mathbb{Q} \tau \|^h_{\dot B^{\frac{d}{p}  +2}_{p,1}} + \| \mathbb{P} \tau_F \|_{\dot B^{\frac{d}{p}  }_{p,1}} + \| \mathbb{P} \bar{\tau} \|_{\dot B^{\frac{d}{p}}_{p,1}})
\end{align*}
from which and \eqref{jiaquanguji}, one has
\begin{align}\label{tdu_ll}
	&\int_{0}^{t} \| \Lambda^{-1} \div g_{\alpha}(\tau ,\nabla \u )_{\lambda}\|^\ell_{\dot B^{\frac{d}{p} }_{p,1}} \,dt' \lesssim \| \u_{\lambda} \|^\ell_{{L}^{1}_{t, f}(\dot B^{\frac{d}{p} - 1}_{p,1})} + \| \u_{\lambda} \|^h_{{L}^{1}_{t, f}(\dot B^{\frac{d}{p} + 1}_{p,1})}+\ea\eb.
\end{align}
Combining \eqref{udtlambda_l} and \eqref{tdu_ll} together , we can get
\begin{align}\label{h2_l}
	& \int_{0}^{t} \| \Lambda^{-1} \div \mathbf{N}^2_{\lambda} \|^\ell_{\dot B^{\frac{d}{p}}_{p,1}} \,dt' \lesssim   \| \u_{\lambda} \|^\ell_{{L}^{1}_{t, f}(\dot B^{\frac{d}{p} - 1}_{p,1})}  +  \| \u_{\lambda} \|^h_{{L}^{1}_{t, f}(\dot B^{\frac{d}{p} + 1}_{p,1})} +\ea\eb.
\end{align}
Inserting \eqref{h1_l} and \eqref{h2_l} into \eqref{udelta_l}, we can get \eqref{ut_low}. Consequently, we complete the proof of the lemma.
\end{proof}
\end{lemma}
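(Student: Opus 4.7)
My plan is to combine a spectral representation of the linearized $(\u,\v)$-system with a standard parabolic $L^p$ argument for $\delta$, and then to close the nonlinear terms using Besov product laws while absorbing the contribution of $\p\tau_F$ into the weight $f(t)$.

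\textbf{Velocity via Green's matrix.} First I would localize System \eqref{linear1} at frequency $k$, apply Duhamel's formula with the Green's matrix of Lemma \ref{low_fre}, and multiply by $\exp\bigl\{-\lambda\int_0^t f(t')\,dt'\bigr\}$ to rewrite $\dot{\Delta}_k\u_\lambda$ as the sum of a linear contribution of $(\u_0,\v_0)$ convolved with $\check{\mathcal G}_3$ and $\la\check{\mathcal G}_1$, plus a nonlinear contribution with sources $\dot{\Delta}_k\mathbf H_\lambda$ and $\dot{\Delta}_k\mathbf M_\lambda$. In the low-frequency zone $k\le k_0$, the decay $\|\check{\mathcal G}_i\ast\dot{\Delta}_k h\|_{L^p}\lesssim e^{-c2^{2k}t}\|\dot{\Delta}_k h\|_{L^p}$ from Lemma \ref{low_fre} (valid for $1\le p\le 2d$) allows me to bound simultaneously $\|\dot{\Delta}_k\u_\lambda\|_{L^\infty_t L^p}$, $\|\lambda f\,\dot{\Delta}_k\u_\lambda\|_{L^1_t L^p}$, and $2^{2k}\|\dot{\Delta}_k\u_\lambda\|_{L^1_t L^p}$ by the initial data plus $\|\dot{\Delta}_k\mathbf H_\lambda\|_{L^1_t L^p}+2^k\|\dot{\Delta}_k\mathbf M_\lambda\|_{L^1_t L^p}$. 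The crucial algebraic identity is $\int_0^t\lambda f(s)\exp\bigl(-\lambda\int_0^s f(t')\,dt'\bigr)\,ds\le 2$, which combined with Fubini's theorem transfers the weight $\lambda f$ onto the source integrand for the $L^1_{t,f}$-piece. Summing with $2^{(\frac{d}{p}-1)k}$ over $k\le k_0$ delivers the velocity half of \eqref{ut_low}.

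\textbf{Divergence $\delta$ via damped heat equation.} Applying $\la^{-1}\div$ to the $\q\tau$-equation in \eqref{reformulate} yields the damped parabolic equation \eqref{eq_delta} for $\delta$ with forcing $-\tfrac12\la\u+\la^{-1}\div\mathbf N^2$. Localizing, multiplying by the weight and then by $|\dot{\Delta}_k\delta_\lambda|^{p-2}\dot{\Delta}_k\delta_\lambda$, and invoking Lemma \ref{like-bernstein} to handle the dissipation, I get a differential inequality that after integration bounds $\|\dot{\Delta}_k\delta_\lambda\|_{L^p}$ by $\|\dot{\Delta}_k\delta_0\|_{L^p}+2^k\|\dot{\Delta}_k\u_\lambda\|_{L^1_t L^p}+\|\dot{\Delta}_k\mathbf N^2_\lambda\|_{L^1_t L^p}$. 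Summing with $2^{\frac{d}{p}k}$ over $k\le k_0$ and then taking a suitable linear combination with the velocity bound from the previous step allows the cross-term $\|\u_\lambda\|^\ell_{L^1_t(\dot B^{\frac{d}{p}+1}_{p,1})}$ to be absorbed by the left-hand side.

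\textbf{Nonlinear closure.} Using Lemma \ref{le2.6}, Lemma \ref{embedding}, and the splitting $\|\u\|_{\dot B^{\frac{d}{p}}_{p,1}}\lesssim \|\u\|^\ell_{\dot B^{\frac{d}{p}-1}_{p,1}}+\|\u\|^h_{\dot B^{\frac{d}{p}+1}_{p,1}}$, I would bound $\|\u\cdot\nabla\u\|_{\dot B^{\frac{d}{p}-1}_{p,1}}\lesssim\|\u\|_{\dot B^{\frac{d}{p}}_{p,1}}^2$, contributing $\ea\eb$. For the stress products hidden inside $\la^{-1}\div\mathbf N^2$, I would decompose $\p\tau=\p\tau_F+\p\bar\tau$ and factor the pieces containing $\|\p\tau_F\|_{\dot B^{\frac{d}{p}}_{p,1}}+\|\p\tau_F\|_{\dot B^{\frac{d}{p}+2}_{p,1}}=f(t)$; by the very definition of the weighted Chemin-Lerner norm these produce $\|\u_\lambda\|^\ell_{L^1_{t,f}(\dot B^{\frac{d}{p}-1}_{p,1})}+\|\u_\lambda\|^h_{L^1_{t,f}(\dot B^{\frac{d}{p}+1}_{p,1})}$, while the remaining factors involving $\q\tau_\lambda$ and $\p\bar\tau_\lambda$ give another $\ea\eb$ contribution. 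Substitution then closes \eqref{ut_low}.

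\textbf{Main obstacle.} The hardest step is the Green's-matrix argument: carrying the exponential weight through the Duhamel convolution and generating an $L^1_{t,f}$ dissipation term rather than an $L^\infty_t$ factor of $\u$. Swapping the order of integration together with the bound $\int_0^t\lambda f(s)\exp\bigl(-\lambda\int_0^s f(t')\,dt'\bigr)\,ds\le 2$ is the crucial mechanism; without it one picks up an uncontrolled $\ea$ factor on the right-hand side. A secondary technical point is that the range $1\le p\le 2d$ is inherited exactly from the Green's-kernel $L^p$-bound in Lemma \ref{low_fre}.
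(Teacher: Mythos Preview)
Your proposal is correct and follows essentially the same approach as the paper: Green's-matrix/Duhamel representation with the kernel decay of Lemma~\ref{low_fre} for $\u_\lambda$ (bounding the three pieces $L^\infty_t$, $L^1_{t,f}$, $2^{2k}L^1_t$ via the identity $\int_0^t\lambda f(s)e^{-\lambda\int_0^s f}\,ds\le 2$ and Fubini), a direct $L^p$ energy argument on the damped heat equation for $\delta_\lambda$, a linear combination to absorb the cross term, and closure of the nonlinearities by product laws with the $\p\tau=\p\tau_F+\p\bar\tau$ splitting feeding the $\p\tau_F$ contribution into the weight $f$. You have also correctly flagged the Fubini/weight-transfer step and the $1\le p\le 2d$ restriction as the key technical points.
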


Next we shall
establish the high-frequency estimates of $( \u_{\lambda}, \delta_{\lambda}) $.
\subsection{Estimates  of $( \u_{\lambda}, \delta_{\lambda}) $ in high-frequency}\label{sub_high} In this subsection, since the equation of velocity lacks dissipation, we aim to find the damping effect of the velocity field in high frequency.
\begin{lemma}\label{le_dut_h}
	Under the condition in Theorem \ref{theorem1.1}, there exists a constant $ C$ such that
	\begin{align}\label{ut_h}
	&\| (\Lambda \u_{\lambda}, \delta_{\lambda})\|^h_{\widetilde{L}^{\infty}_{t}(\dot B^{\frac{d}{p}}_{p,1})}   +  \lambda \|(\Lambda \u_{\lambda}, \delta_{\lambda}) \|^h_{{L}^{1}_{t, f}(\dot B^{\frac{d}{p}}_{p,1})} + \| \u_{\lambda} \|^h_{{L}^{1}_{t}(\dot B^{\frac{d}{p}+1}_{p,1})} + \|  \delta_{\lambda} \|^h_{{L}^{1}_{t}(\dot B^{\frac{d}{p} + 2}_{p,1})}\nn\\
	&\quad\le  \| (\Lambda \mathbf{u}_0,\mathbb{Q}\tau_0)\|^h_{\dot B^{\frac{d}{p}}_{p,1}}  + C \| \u_{\lambda} \|^\ell_{{L}^{1}_{t, f}(\dot B^{\frac{d}{p} - 1}_{p,1})} + C \| \Lambda\u_{\lambda} \|^h_{{L}^{1}_{t, f}(\dot B^{\frac{d}{p} }_{p,1})} + C\ea\eb.
	\end{align}
\begin{proof}
	In order to find the damping effect of the velocity field in high frequency, we need to introduce the new unknown $$ \G \stackrel{\mathrm{def}}{=} -\mathbb{Q} \tau - \frac{1}{2} \nabla \Delta^{-1} \u.$$
It is direct to see that $\u$ satisfy the following damping equation
	\begin{align}
	&\partial_t \u + \frac{1}{2} \u = -\mathbb{P} (\u \cdot \nabla \u) - \mathbb{P} \div \G.
	\end{align}
	Applying the operator $ \dot{\Delta}_k \Lambda $  to the above equation and multiplying $ \exp\{- \lambda \int_{0}^{t} f(t')dt'\}$, ($ \lambda$ will be chosen later),  we have
	\begin{align}\label{du_k}
		\partial_t \dot{\Delta}_k \Lambda \u_{\lambda} + \frac{1}{2} \Lambda \u_{\lambda} = ([\u \cdot \nabla, \Lambda \dot{\Delta}_k \mathbb{P}] \u)_{\lambda} - \u \cdot \nabla \Lambda \dot{\Delta}_k \u_{\lambda}  - \dot{\Delta}_k \Lambda \mathbb{P} \div \G_{\lambda}.
	\end{align}
Multiplying  \eqref{du_k} by $  | \dot{\Delta}_k \Lambda \u_{\lambda} |^{p-2} \cdot \dot{\Delta}_k \Lambda \u_{\lambda}$  and integration by parts over $ \R^d$,  we have
	\begin{align}\label{du}
		&\frac{1}{p} \frac{d}{dt} \| \dot{\Delta}_k \Lambda \u_{\lambda}\|^p_{L^p} + \lambda f(t) \| \dot{\Delta}_k \Lambda \u_{\lambda}\|^p_{L^p} + \frac{1}{2} \| \dot{\Delta}_k \Lambda \u_{\lambda}\|^p_{L^p} \nn\\
		&\quad\le \|(([\u \cdot \nabla, \Lambda \dot{\Delta}_k \mathbb{P}] \u)_{\lambda},  \dot{\Delta}_k \Lambda \mathbb{P} \div \G_{\lambda}) \|_{L^p} \| \dot{\Delta}_k \Lambda \u_{\lambda}\|^{p-1}_{L^p}
	\end{align}
	where we have used the fact
	\begin{align*}
		\int_{\R^d} \u \cdot \nabla \Lambda \dot{\Delta}_k \u_{\lambda}  | \dot{\Delta}_k \Lambda \u_{\lambda} |^{p-2} \cdot \dot{\Delta}_k \Lambda \u_{\lambda} \,dx
 =& \frac{1}{p} \int_{\R^d}  \u \cdot \nabla |\dot{\Delta}_k \Lambda \u_{\lambda} |^pdx\nn\\
  = & -\frac{1}{p} \int_{\R^d} \div \u |\dot{\Delta}_k \Lambda \u_{\lambda} |^pdx=0.
	\end{align*}

Thanks to the H\"oler inequality, we can infer from \eqref{du} that
	\begin{align}\label{duk}
		&\frac{1}{p} \frac{d}{dt} \| \dot{\Delta}_k \Lambda \u_{\lambda}\|_{L^p} + \lambda f(t) \| \dot{\Delta}_k \Lambda \u_{\lambda}\|_{L^p} + \frac{1}{2} \| \dot{\Delta}_k \Lambda \u_{\lambda}\|_{L^p}\nn\\
 &\quad\le \|(([\u \cdot \nabla, \Lambda \dot{\Delta}_k \mathbb{P}] \u)_{\lambda},  \dot{\Delta}_k \Lambda \mathbb{P} \div \G_{\lambda}) \|_{L^p}.
	\end{align}
In terms of $\u$, the function $\G$ satisfies the following  heat type equation
\begin{align}\label{gfg}
\partial_t \G - \Delta \G =  \nabla \Delta^{-1} \Big(-\frac{1}{2} \mathbf{N}^1  + \frac{1}{2} \mathbb{P} \div \G + \frac{1}{4} \u\Big) - \mathbb{Q} \mathbf{N}^2 + \q \tau.
\end{align}
Applying $\ddk\exp\{- \lambda \int_{0}^{t} f(t')dt'\}$ to the above equation leads to
\begin{align}\label{w_k}
		\partial_t \dot{\Delta}_k \G_{\lambda} - \Delta \dot{\Delta}_k \G_{\lambda} =  \dot{\Delta}_k \nabla \Delta^{-1} (-\frac{1}{2} \mathbf{N}^1  + \frac{1}{2} \mathbb{P} \div \G + \frac{1}{4} \u)_{\lambda}  - \dot{\Delta}_k \mathbb{Q} \mathbf{N}^2_{\lambda} + \ddk \q \tau_{\lambda}.
	\end{align}

Thus, we get   by a similar derivation of \eqref{ptbar_k} that
	\begin{align}\label{wk}
		&\frac{1}{p} \frac{d}{dt} \| \dot{\Delta}_k \G_{\lambda}\|_{L^p} + \lambda f(t) \| \dot{\Delta}_k \G_{\lambda}\|_{L^p} + 2^{2k} \| \dot{\Delta}_k \G_{\lambda}\|_{L^p} \nn\\
		&\quad\lesssim 2^{-k} \| \dot{\Delta}_k \mathbf{N}^1_{\lambda} \|_{L^p} + 2^{-k} \| \dot{\Delta}_k \u_{\lambda}\|_{L^p} + \| \dot{\Delta}_k \G_{\lambda} \|_{L^p} + \| \dot{\Delta}_k \mathbf{N}^2_{\lambda} \|_{L^p} + \| \ddk \q \tau_{\lambda}\|_{L^p}.
	\end{align}
Let $\beta>0$ be a small fixed constant which  will be determined later, multiplying \eqref{duk} by $  2\beta$ and then adding to \eqref{wk}, we can get
	\begin{align}\label{duw_k}
		&\frac{1}{p} \frac{d}{dt} ( 2\beta \| \dot{\Delta}_k \Lambda \u_{\lambda}\|_{L^p} + \| \dot{\Delta}_k \G_{\lambda}\|_{L^p} ) +  \beta \|\Lambda \dot{\Delta}_k  \u_{\lambda}\|_{L^p} + 2^{2k} \| \dot{\Delta}_k \G_{\lambda}\|_{L^p} \nn\\
		&\qquad +  \lambda f(t) (2\beta \| \dot{\Delta}_k \Lambda \u_{\lambda}\|_{L^p} + \| \dot{\Delta}_k \G_{\lambda}\|_{L^p})\nn\\
 &\quad\le C(\beta 2^{2k}+1) \| \dot{\Delta}_k \G_{\lambda}\|_{L^p} + C 2^{-k} \| \dot{\Delta}_k  \u_{\lambda}\|_{L^p} \nn\\
		&\quad \quad + C(2^{-k} \| \dot{\Delta}_k \mathbf{N}^1_{\lambda} \|_{L^p}  + \| \dot{\Delta}_k \mathbf{N}^2_{\lambda} \|_{L^p} + \| ([\u \cdot \nabla, \Lambda \dot{\Delta}_k \mathbb{P}] \u)_{\lambda}\|_{L^p} + \| \ddk \q \tau_{\lambda}\|_{L^p}).
	\end{align}
	It is easy to see that
	\begin{align*}
		C2^{-k} \| \dot{\Delta}_k \u_{\lambda} \|_{L^p} \le C2^{-2k_0} \| \Lambda
		 \dot{\Delta}_k \u_{\lambda} \|_{L^p}       \qquad   \text{for all}\  k\ge k_0 -1.
	\end{align*}

	Choosing $ k_0$ is large enough and $ \beta$ is sufficiently small such that $ \frac{\beta}{2} \ge C2^{-2k_0}$ and  $ \frac{1}{2} \ge C\beta $, we infer from \eqref{duw_k} that
	\begin{align}\label{duwk}
		&\frac{1}{p} \frac{d}{dt} ( 2\beta \| \dot{\Delta}_k \Lambda \u_{\lambda}\|_{L^p} + \| \dot{\Delta}_k \G_{\lambda}\|_{L^p} )  +  \beta \|\Lambda \dot{\Delta}_k  \u_{\lambda}\|_{L^p} + 2^{2k} \| \dot{\Delta}_k \G_{\lambda}\|_{L^p}\nn\\
&\qquad+  \lambda f(t) (2\beta \| \dot{\Delta}_k \Lambda \u_{\lambda}\|_{L^p} + \| \dot{\Delta}_k \G_{\lambda}\|_{L^p})\nn\\
		&\quad \le   C(2^{-k} \| \dot{\Delta}_k \mathbf{N}^1_{\lambda} \|_{L^p}  + \| \dot{\Delta}_k \mathbf{N}^2_{\lambda} \|_{L^p} + \| ([\u \cdot \nabla, \Lambda \dot{\Delta}_k \mathbb{P}] \u)_{\lambda}\|_{L^p} +\| \ddk \q \tau_{\lambda}\|_{L^p}).
	\end{align}
	On the one hand, there holds
	\begin{align*}
		2\beta \| \dot{\Delta}_k \Lambda \u_{\lambda}\|_{L^p} + \| \dot{\Delta}_k \G_{\lambda}\|_{L^p} \lesssim \| \dot{\Delta}_k \Lambda \u_{\lambda}\|_{L^p} + \| \dot{\Delta}_k \mathbb{Q}\tau_{\lambda}\|_{L^p}.
	\end{align*}
	On the other hand, due to $ \frac{\beta}{2} \ge C2^{-2k_0}$, we have
	 \begin{align*}
	 	2\beta \| \dot{\Delta}_k \Lambda \u_{\lambda}\|_{L^p} + \| \dot{\Delta}_k \G_{\lambda}\|_{L^p} \gtrsim  \| \dot{\Delta}_k \Lambda \u_{\lambda}\|_{L^p} + \| \dot{\Delta}_k \mathbb{Q}\tau_{\lambda}\|_{L^p}.
	 \end{align*}

	Hence, by using the fact $ \|\dot{\Delta}_k \delta \|_{L^q} \approx \|\dot{\Delta}_k \mathbb{Q} \tau \|_{L^q} (q>1)$, we can get
$$ 2\beta \| \dot{\Delta}_k \Lambda \u_{\lambda}\|_{L^p} + \| \dot{\Delta}_k \G_{\lambda}\|_{L^p} \approx \| \dot{\Delta}_k \Lambda \u_{\lambda}\|_{L^p} + \| \dot{\Delta}_k \delta_{\lambda}\|_{L^p},$$
	 from which and multiplying the equation \eqref{duwk} by $ 2^{\frac{d}{p}k}$, taking the summation of $ k \ge k_0 $, and then integrating over $ [0, t]$, we can finally  get
	\begin{align}\label{le_uqt_h}
		&\| (\Lambda \u_{\lambda}, \delta_{\lambda})\|^h_{\widetilde{L}^{\infty}_{t}(\dot B^{\frac{d}{p}}_{p,1})} +   \lambda \| (\Lambda \u_{\lambda} , \delta_{\lambda} )\|^h_{{L}^{1}_{t, f}(\dot B^{\frac{d}{p}}_{p,1})}   + \| \u_{\lambda} \|^h_{{L}^{1}_{t}(\dot B^{\frac{d}{p}+1}_{p,1})} + \|  \delta_{\lambda} \|^h_{{L}^{1}_{t}(\dot B^{\frac{d}{p} + 2}_{p,1})} \nn\\
		&\quad\le C \| (\Lambda \mathbf{u}_0,\mathbb{Q}\tau_0)\|^h_{\dot B^{\frac{d}{p}}_{p,1}} + C\| \mathbf{N}^1_{\lambda} \|^h_{\tilde{L}^1_{t} (\dot{B}^{\frac{d}{p} -1}_{p,1} )}   \nn\\
		&\qquad +C \| \mathbf{N}^2_{\lambda} \|^h_{\tilde{L}^1_{t} (\dot{B}^{\frac{d}{p}}_{p,1} )}+ C \int_{0}^{t} \sum_{k \ge k_0 -1} 2^{\frac{d}{p}k} \| ([\u \cdot \nabla, \Lambda \dot{\Delta}_k \mathbb{P}]\u)_{\lambda}\|_{L^p} \,dt'.
	\end{align}
 To bound the last term in \eqref{le_uqt_h}, we claim the following important
commutator estimate, which we shall postpone its proof at the end of this lemma.

\noindent { $\mathrm{\mathbf{Claim:}}$}
Let $ k_0$ be a fixed positive constant and $ p$ satisfy $ 1 \le p \le 2d$, there holds
	\begin{align}\label{R_h}
	\sum_{k \ge k_0 -1} 2^{\frac{d}{p}k} \| [\u \cdot \nabla, \Lambda \dot{\Delta}_k \mathbb{P}]\u\|_{L^p} \le C \| \nabla \u\|^2_{\dot B^{\frac{d}{p}}_{p,1}}.
	\end{align}

\vskip .2in
\noindent Now, for the last term in \eqref{le_uqt_h}, it follows from \eqref{R_h} directly that
		\begin{align}\label{last_h}
		&\int_{0}^{t} \sum_{k \ge k_0 -1} 2^{\frac{d}{p}k} \| ([\u \cdot \nabla, \Lambda \dot{\Delta}_k \mathbb{P}]\u)_{\lambda}\|_{L^p} dt' \nn\\
		&\quad \lesssim (\| \u\|^\ell_{\widetilde{L}^{\infty}_{t} (\dot{B}^{\frac{d}{p} -1}_{p,1} )} + \| \u\|^h_{\widetilde{L}^{\infty}_{t} (\dot{B}^{\frac{d}{p} +1}_{p,1} )}) (\| \u_{\lambda}\|^\ell_{{L}^{1}_{t} (\dot{B}^{\frac{d}{p} +1}_{p,1} )} + \| \u_{\lambda}\|^h_{{L}^{1}_{t} (\dot{B}^{\frac{d}{p} +1}_{p,1} )}).
  		\end{align}
		For the term $ \mathbf{N}^1_{\lambda}$, applying Lemma \ref{le2.6} gives
		\begin{align}\label{1_h}
			\| \mathbf{N}^1 \|^h_{\dot{B}^{\frac{d}{p} -1}_{p,1} } \lesssim (\| \u  \|^\ell_{\dot{B}^{\frac{d}{p} -1}_{p,1} } + \| \u  \|^h_{\dot{B}^{\frac{d}{p} +1}_{p,1} }) (\| \u  \|^\ell_{\dot{B}^{\frac{d}{p} +1}_{p,1} } + \| \u  \|^h_{\dot{B}^{\frac{d}{p} +1}_{p,1} }).
		\end{align}
		Hence,
		\begin{align}\label{first_h}
			\| \mathbf{N}^1_{\lambda} \|^h_{\tilde{L}^1_{t} (\dot{B}^{\frac{d}{p} -1}_{p,1} )} \lesssim (\| \u  \|^\ell_{\widetilde{L}^{\infty}_t(\dot{B}^{\frac{d}{p} -1}_{p,1} )} + \| \u  \|^h_{\widetilde{L}^{\infty}_t(\dot{B}^{\frac{d}{p} +1}_{p,1} )}) (\| \u_{\lambda}  \|^\ell_{{L}^{1}_t(\dot{B}^{\frac{d}{p} +1}_{p,1} )} + \| \u_{\lambda}  \|^h_{{L}^{1}_t(\dot{B}^{\frac{d}{p} +1}_{p,1} )}).
		\end{align}
		Finally, for the term $ \mathbf{N}^2_{\lambda}$, applying Lemma \ref{bernstein} and Lemma \ref{le2.6} gives
		\begin{align}\label{21_h}
			\| \u \cdot \nabla \tau \|^h_{\dot{B}^{\frac{d}{p} }_{p,1} } \lesssim \| \u\|_{\dot{B}^{\frac{d}{p} }_{p,1}} \|  \nabla \tau \|_{\dot{B}^{\frac{d}{p} }_{p,1} } \lesssim (\| \u\|^\ell_{\dot{B}^{\frac{d}{p} -1}_{p,1}} + \| \u\|^h_{\dot{B}^{\frac{d}{p} +1}_{p,1}})\| \tau\|_{\dot{B}^{\frac{d}{p} +1}_{p,1}}
		\end{align}
		and
		\begin{align}\label{22_h}
			\| g_{\alpha}(\tau, \nabla \u) \|^h_{\dot{B}^{\frac{d}{p} }_{p,1} } \lesssim \| \u\|_{\dot{B}^{\frac{d}{p} }_{p,1}} \|   \tau \|_{\dot{B}^{\frac{d}{p} }_{p,1} } \lesssim (\| \u\|^\ell_{\dot{B}^{\frac{d}{p}}_{p,1}} + \| \u\|^h_{\dot{B}^{\frac{d}{p}}_{p,1}})\| \tau\|_{\dot{B}^{\frac{d}{p}}_{p,1}}
		\end{align}
		from  which and  \eqref{jiaquanguji}, we have
		\begin{align}\label{h2_h}
		&\|   \mathbf{N}^2_{\lambda} \|^h_{{L}^{1}_t ({\dot B^{\frac{d}{p}}_{p,1}})} \lesssim  \| \u_{\lambda} \|^\ell_{{L}^{1}_{t, f}(\dot B^{\frac{d}{p} - 1}_{p,1})}  +  \| \u_{\lambda} \|^h_{{L}^{1}_{t, f}(\dot B^{\frac{d}{p} + 1}_{p,1})} +\ea\eb.
		\end{align}
	Consequently, 	inserting \eqref{last_h},  \eqref{first_h} and \eqref{h2_h} into \eqref{le_uqt_h} we can arrive at \eqref{ut_h}.

Before ending the proof of the lemma, we have to  prove our claim \eqref{R_h}.
		We first use  Bony's decomposition to rewrite
		\begin{align}\label{remain}
		 &[\u \cdot \nabla , \Lambda \dot{\Delta}_k \mathbb{P}]\u = \u \cdot \nabla (\Lambda \dot{\Delta}_k \u) - \Lambda \dot{\Delta}_k \mathbb{P} (\u \cdot \nabla \u)\nn\\
		&\quad= \dot{T}_{u^j} \partial_j \Lambda \dot{\Delta}_k \u + \dot{T}_{\partial_j \Lambda \dot{\Delta}_k \u} \u^j + \dot{R}(\u^j, \partial_j \Lambda \dot{\Delta}_k \u) - \Lambda \dot{\Delta}_k \mathbb{P}( \dot{T}_{\u^j} \partial_j \u + \dot{T}_{\partial_j \u} \u^j + \dot{R}(\u^j, \partial_j \u))\nn\\
		&\quad= [\dot{T}_{\u_j}, \Lambda \dot{\Delta}_k \mathbb{P}]\partial_j \u + \dot{T}_{\partial_j \Lambda \dot{\Delta}_k \u} \u^j - \Lambda \dot{\Delta_k} \mathbb{P} \dot{T}_{\partial_j \u} \u^j + \dot{R}(\u^j, \partial_j \Lambda \dot{\Delta}_k \u) - \Lambda \dot{\Delta}_k \mathbb{P} \dot{R}(\u^j, \partial_j \u).
		\end{align}
		Next we need to bound the right terms of \eqref{remain}. For the first term, on the basis of the  definition of operator $ \dot{T}$ and $ \div \u = 0$, we can get
		\begin{align}\label{remain_1}
		&[\dot{T}_{\u_j}, \Lambda \dot{\Delta}_k \mathbb{P}]\partial_j \u = \dot{T}_{u^j} \partial_j \Lambda \dot{\Delta}_k \u - \Lambda \dot{\Delta}_k \mathbb{P} \dot{T}_{\u^j} \partial_j \u \nn\\
		& \quad= \sum_{k' \in \Z} \dot{S}_{k' - 1} \u^j \dot{\Delta}_{k'} (\Lambda \partial_j \dot{\Delta}_{k} \u) - \sum_{k' \in \Z} \Lambda \dot{\Delta}_{k} (\dot{S}_{k' - 1} \u^j \dot{\Delta}_{k'} \partial_j \u) + \sum_{k' \in \Z} \Lambda \dot{\Delta}_{k} \mathbb{Q} (\dot{S}_{k' - 1} \u^j \dot{\Delta}_{k'} \partial_j \u) \nn\\
		&\quad = \sum_{| k-k'| \le 4}[\dot{S}_{k' - 1} \u^j, \dot{\Delta}_{k} \Lambda]\partial_j \dot{\Delta}_{k'} \u - \sum_{| k-k'| \le 4}[\dot{S}_{k' - 1} \u^j, \dot{\Delta}_{k} \Lambda \mathbb{Q}]\partial_j \dot{\Delta}_{k'} \u \nn\\
&\quad\stackrel{\mathrm{def}}{=} I_{k}^1 + I_{k}^2.
		\end{align}
		Since $ \dot{\Delta}_{k} f = 2^{kd} \int_{\R^d} \varphi(2^k y)f(x-y) dy$, then there exists a function $ \widetilde{\varphi}$ with a compact support set such that $ \dot{\Delta}_{k} \Lambda f = 2^{k(d +1)} \int_{\R^d} \widetilde{\varphi}(2^k y)f(x-y) dy$. The operator $ \mathbb{Q}$ is a zero order Fourier multiplier, then there exists a function $ \bar{\varphi}$ with a compact support set such that $$ \dot{\Delta}_{k} \Lambda \mathbb{Q}f = 2^{k(d +1)} \int_{\R^d} \bar{\varphi}(2^k y)f(x-y) dy.$$
Hence, we have
		\begin{align}\label{Ik1}
		\|I_{k}^1\|_{L^p} &= \Big\|  \sum_{| k-k'| \le 4} 2^{k(d +1)} \int_{\R^d} \widetilde{\varphi}(2^k(x-y)) [\dot{S}_{k' - 1} \u^j (x) - \dot{S}_{k' - 1} \u^j(y)]\partial_j \dot{\Delta}_{k'} \u(y)dy \Big\|_{L^p}\nn\\
		&\le  \Big\|  \sum_{| k-k'| \le 4} 2^{k(d +1)} \int_{\R^d} \widetilde{\varphi}(2^k(x-y)) [\int_{0}^{1}(x-y) \nabla \dot{S}_{k' - 1} \u^j (y+\alpha(x-y)) d\alpha] \partial_j \dot{\Delta}_{k'} \u(y)dy \Big\|_{L^p}\nn\\
		&\le \| \nabla \u\|_{L^{\infty}} \sum_{| k-k'| \le 4} 2^{k(d +1)}  \Big\| \int_{\R^d} \widetilde{\varphi}(2^k(x-y)) (x-y) \partial_j \dot{\Delta}_{k'} \u(y)dy \Big\|_{L^p}\nn\\
		& \le \| \nabla \u\|_{L^{\infty}} \sum_{| k-k'| \le 4} 2^{k(d +1)}  \Big\{ \int_{\R^d} \widetilde{\varphi}(2^ky)ydy \Big\} \| \partial_j \dot{\Delta}_{k'} \u(y)\|_{L^p}\nn\\
		& \le C \| \nabla \u\|_{L^{\infty}}  \| \partial_j \dot{\Delta}_{k'} \u\|_{L^p}
		\end{align}
		where we have used the fact $ 2^{k(d +1)} \int_{\R^d} \widetilde{\varphi}(2^ky)ydy \le C$. Similarly, we can also get $$ \|I_{k}^2\|_{L^p} \le C \| \nabla \u\|_{L^{\infty}}  \| \partial_j \dot{\Delta}_{k'} \u\|_{L^p}$$
from which we get
		\begin{align}\label{1}
		\sum_{k \ge k_0 -1} 2^{k\frac{d}{p}} \| [\dot{T}_{\u_j}, \Lambda \dot{\Delta}_k \mathbb{P}]\partial_j \u \|_{L^p} \lesssim& \| \nabla \u\|_{L^{\infty}} \| \nabla \u\|_{\dot{B}^{\frac{d}{p}}_{p,1}}
\lesssim \| \nabla \u\|^2_{\dot{B}^{\frac{d}{p}}_{p,1}}.
		\end{align}
		According to the Lemma \ref{le2.5} and Lemma \ref{embedding},  there holds
		\begin{align}\label{2}
		\sum_{k \ge k_0 -1} 2^{k\frac{d}{p}} \| \dot{T}_{\partial_j \Lambda \dot{\Delta}_k \u} \u^j\|_{L^p} \lesssim \| \dot{T}_{\partial_j \Lambda  \u} \u^j\|_{\dot{B}^{\frac{d}{p}}_{p,1}} \lesssim \| \nabla^2 \u\|_{\dot{B}^{-1}_{\infty,1}} \| \u \|_{\dot{B}^{\frac{d}{p} + 1}_{p,1}}
		\lesssim \| \nabla \u\|^2_{\dot{B}^{\frac{d}{p}}_{p,1}}.
		\end{align}
		Similarly, we have
		\begin{align}\label{3}
		\sum_{k \ge k_0 -1} 2^{k\frac{d}{p}} \| \Lambda \dot{\Delta_k} \mathbb{P} \dot{T}_{\partial_j \u} \u^j \|_{L^p} &\lesssim  \| \Lambda  \dot{T}_{\partial_j \u} \u^j \|_{\dot{B}^{\frac{d}{p}}_{p,1}}  \lesssim \| \dot{T}_{\partial_j \u} \u^j \|_{\dot{B}^{\frac{d}{p} + 1}_{p,1}}\nn\\& \lesssim \| \nabla \u \|_{\dot{B}^{0}_{\infty, 1}}  \|  \u \|_{\dot{B}^{\frac{d}{p} + 1}_{p,1}}
		 \lesssim \| \nabla \u\|^2_{\dot{B}^{\frac{d}{p}}_{p,1}};
		\end{align}
		\begin{align}\label{4}
		\sum_{k \ge k_0 -1} 2^{k\frac{d}{p}} \| \dot{R}(\u^j, \partial_j \Lambda \dot{\Delta}_k \u) \|_{L^p} \lesssim \| \dot{R}(\u^j, \partial_j \Lambda  \u) \|_{\dot{B}^{\frac{d}{p}}_{p,1}} \lesssim \|  \u \|_{\dot{B}^{\frac{d}{p} + 1}_{p,1}} \| \nabla^2 \u \|_{\dot{B}^{-1}_{\infty, 1}}
		 \lesssim \| \nabla \u\|^2_{\dot{B}^{\frac{d}{p}}_{p,1}};
		\end{align}
		and
		\begin{align}\label{5}
		\sum_{k \ge k_0 -1} 2^{k\frac{d}{p}} \| \Lambda \dot{\Delta}_k \mathbb{P} \dot{R}(\u^j, \partial_j \u)\|_{L^p} \lesssim \|    \dot{R}(\u^j, \partial_j \u)\|_{\dot{B}^{\frac{d}{p} + 1}_{p,1}} \lesssim \| \nabla \u \|_{\dot{B}^{0}_{\infty, 1}}  \|  \u \|_{\dot{B}^{\frac{d}{p} + 1}_{p,1}}
		\lesssim \| \nabla \u\|^2_{\dot{B}^{\frac{d}{p}}_{p,1}}.
		\end{align}
		Combining \eqref{1}$ -$\eqref{5}, we can get \eqref{R_h}.	 Consequently, we complete the proof of the lemma.
	\end{proof}
\end{lemma}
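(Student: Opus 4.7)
The plan is to extract a damping effect for the velocity at high frequencies despite the absence of explicit viscosity in the momentum equation. The key device is the effective unknown $\G \stackrel{\mathrm{def}}{=} -\mathbb{Q}\tau - \tfrac{1}{2}\nabla\Delta^{-1}\u$, which converts the $\u$-equation into the first-order damped equation $\partial_t \u + \tfrac{1}{2}\u = -\mathbb{P}(\u\cdot\nabla\u) - \mathbb{P}\,\div\G$, while $\G$ itself obeys a heat-type equation whose source involves $\mathbb{Q}\tau$, $\mathbf{N}^1$, $\mathbf{N}^2$, and a lower-order term proportional to $\u$. With the exponential weight $\exp(-\lambda\int_0^t f)$ absorbed into $\u_\lambda$ and $\G_\lambda$, I would derive dyadic inequalities by applying $\ddk\Lambda$ to the $\u$-equation and $\ddk$ to the $\G$-equation, using the $L^p$-type inequality of Lemma~\ref{like-bernstein}. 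The transport term disappears after integration by parts thanks to $\div\u=0$, leaving the commutator $[\u\cdot\nabla,\Lambda\ddk\mathbb{P}]\u$ to be handled separately.

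Next, I would take a small constant $\beta>0$ and form the combination $2\beta\,(\u\text{-inequality})+(\G\text{-inequality})$. This produces genuine damping $\beta\|\Lambda\ddk\u_\lambda\|_{L^p}$ and parabolic dissipation $2^{2k}\|\ddk\G_\lambda\|_{L^p}$. Two pollution terms must be absorbed: the contribution $C\beta\,2^{2k}\|\ddk\G_\lambda\|_{L^p}$ coming from $\mathbb{P}\,\div\G$, harmless once $\beta$ is small; and a low-order term $C\,2^{-k}\|\ddk\u_\lambda\|_{L^p}$ coming from $\nabla\Delta^{-1}\u$ in the $\G$-equation, which at high frequencies $k\ge k_0$ is bounded by $C\,2^{-2k_0}\|\Lambda\ddk\u_\lambda\|_{L^p}$ and therefore absorbable by choosing $k_0$ large enough. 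A crucial observation is that for $k\ge k_0$ the quantities $2\beta\|\ddk\Lambda\u_\lambda\|_{L^p}+\|\ddk\G_\lambda\|_{L^p}$ and $\|\ddk\Lambda\u_\lambda\|_{L^p}+\|\ddk\delta_\lambda\|_{L^p}$ are equivalent, using $\|\ddk\delta\|_{L^p}\simeq\|\ddk\mathbb{Q}\tau\|_{L^p}$ for $k>k_0$. Multiplying by $2^{kd/p}$, summing over $k\ge k_0$, and integrating in time then yields the desired left-hand side, with the data contribution $\|(\Lambda\u_0,\mathbb{Q}\tau_0)\|^h_{\dot B^{d/p}_{p,1}}$.

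The nonlinear sources $\mathbf{N}^1$ and $\mathbf{N}^2$ are bounded via Lemma~\ref{le2.6} together with the weighted Chemin-Lerner norm: pieces containing $\mathbb{P}\tau_F$ are absorbed through the weight $f(t)$ and contribute $C\,\ea\,\eb$, while the pure-velocity and $\mathbb{Q}\tau$ contributions are absorbed into the $\|\u_\lambda\|^\ell_{L^1_{t,f}(\dot B^{d/p-1}_{p,1})}$ and $\|\Lambda\u_\lambda\|^h_{L^1_{t,f}(\dot B^{d/p}_{p,1})}$ terms appearing on the right-hand side of the target inequality.

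The main obstacle is the commutator term $[\u\cdot\nabla,\Lambda\ddk\mathbb{P}]\u$, which carries an extra $\Lambda$ that must be absorbed without losing a derivative on $\u$. My plan is to split it via Bony's decomposition \eqref{bony} into a genuine commutator $[\dot{T}_{\u^j},\Lambda\ddk\mathbb{P}]\partial_j\u$, two paraproduct corrections $\dot{T}_{\partial_j\Lambda\ddk\u}\u^j$ and $\Lambda\ddk\mathbb{P}\dot{T}_{\partial_j\u}\u^j$, and two remainder terms. For the genuine commutator I would realize $\ddk\Lambda$ and $\ddk\Lambda\mathbb{Q}$ as convolutions with kernels of the form $2^{k(d+1)}\widetilde{\varphi}(2^k\cdot)$, and then first-order Taylor-expand $\dot{S}_{k'-1}\u^j(x)-\dot{S}_{k'-1}\u^j(y)$ in $x-y$; the resulting factor $|x-y|$ exactly cancels one power of $2^k$ in the kernel, yielding $\|\nabla\u\|_{L^\infty}\|\partial_j\dot{\Delta}_{k'}\u\|_{L^p}$ with no derivative loss. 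The four Bony pieces are then controlled by standard paraproduct and remainder estimates in Besov spaces together with the embedding $\dot B^{d/p}_{p,1}\hookrightarrow\dot B^0_{\infty,1}$, yielding the claim $\sum_{k\ge k_0-1} 2^{kd/p}\|[\u\cdot\nabla,\Lambda\ddk\mathbb{P}]\u\|_{L^p}\lesssim \|\nabla\u\|^2_{\dot B^{d/p}_{p,1}}$, whose time integration closes the nonlinear estimate and completes the proof.
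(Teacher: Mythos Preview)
Your proposal is correct and follows essentially the same approach as the paper: the effective unknown $\G=-\mathbb{Q}\tau-\tfrac12\nabla\Delta^{-1}\u$, the $2\beta$-weighted combination of the damped $\u$-inequality with the heat-type $\G$-inequality, the absorption of the two pollution terms via small $\beta$ and large $k_0$, the equivalence $2\beta\|\ddk\Lambda\u_\lambda\|_{L^p}+\|\ddk\G_\lambda\|_{L^p}\approx\|\ddk\Lambda\u_\lambda\|_{L^p}+\|\ddk\delta_\lambda\|_{L^p}$, and the commutator claim via Bony's decomposition plus the kernel/Taylor argument all match the paper's proof step by step.
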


\subsection{Completion the proof of Theorem \ref{theorem1.1}}
In this subsection, we use the continuity argument to complete the proof of Theorem \ref{theorem1.1}. At first, we define $ T^*$ by
\begin{align}\label{T*}
	&T^*  \stackrel{\mathrm{def}}{=} \sup \bigg \{ t\in[ 0, T^*) : \| \u\|^\ell_{\widetilde{L}^{\infty}_t ({\dot B^{\frac{d}{p} - 1}_{p,1}})} +\| \u\|^h_{\widetilde{L}^{\infty}_t ({\dot B^{\frac{d}{p} + 1}_{p,1}})} + \| \delta\|^\ell_{\widetilde{L}^{\infty}_t ({\dot B^{\frac{d}{p} }_{p,1}})} +  \| \delta\|^h_{\widetilde{L}^{\infty}_t ({\dot B^{\frac{d}{p} }_{p,1}})} + \| \mathbb{P} \bar{\tau} \|_{\widetilde{L}^{\infty}_t(\dot B^{\frac{d}{p} }_{p,1})} \nn\\
	&\quad + \| \u \|^\ell_{{L}^{1}_t ({\dot B^{\frac{d}{p} + 1}_{p,1}})} + \| \u \|^h_{{L}^{1}_t ({\dot B^{\frac{d}{p} + 1}_{p,1}})} +  \| \delta \|^\ell_{{L}^{1}_t ({\dot B^{\frac{d}{p} }_{p,1}})} +  \| \delta \|^h_{{L}^{1}_t ({\dot B^{\frac{d}{p} + 2}_{p,1}})} +  \| \mathbb{P} \bar{\tau} \|_{{L}^{1}_t(\dot B^{\frac{d}{p}  }_{p,1})}+  \| \mathbb{P} \bar{\tau} \|_{{L}^{1}_t(\dot B^{\frac{d}{p} +2 }_{p,1})} \le c_0     \bigg \}.
\end{align}
Where $0 < c_0 \ll 1$ will be determined later. According to the local well-posedness of system \eqref{reformulate}, we shall prove that $T^* > 0$ can be extended to $T^* = \infty$.\\
 Then multiplying \eqref{ut_low} and \eqref{ut_h} by an appropriate constant$ (2C+1)$ and adding to \eqref{pt}, finally, we choose $ \lambda \ge 10C$ , we can get
\begin{align}\label{last_1}
	&  \| \u_{\lambda} \|^\ell_{\widetilde{L}^{\infty}_{t}(\dot B^{\frac{d}{p} - 1}_{p,1})} + \|  \delta_{\lambda}\|^\ell_{\widetilde{L}^{\infty}_{t}(\dot B^{\frac{d}{p}}_{p,1})} + \| \mathbb{P} \bar{\tau}_{\lambda} \|_{\widetilde{L}^{\infty}_{t}(\dot B^{\frac{d}{p}}_{p,1})} + \| (\Lambda \u_{\lambda}, \delta_{\lambda})\|^h_{\widetilde{L}^{\infty}_{t}(\dot B^{\frac{d}{p}}_{p,1})} + \frac{\lambda}{2} \| \u_{\lambda}  \|^\ell_{{L}^{1}_{t, f}(\dot B^{\frac{d}{p} - 1}_{p,1})} \nn\\
	&\qquad + \frac{\lambda}{2} \|  \delta_{\lambda} \|^\ell_{{L}^{1}_{t, f}(\dot B^{\frac{d}{p}}_{p,1})} + \lambda \| \mathbb{P} \bar{\tau}_{\lambda} \|_{{L}^{1}_{t, f}(\dot B^{\frac{d}{p} }_{p,1})} + \frac{\lambda}{2} \| \u_{\lambda} \|^h_{{L}^{1}_{t, f}(\dot B^{\frac{d}{p}+1}_{p,1})}  +  \lambda \|  \delta_{\lambda} \|^h_{{L}^{1}_{t, f}(\dot B^{\frac{d}{p}}_{p,1})}+  \| \u_{\lambda}\|^\ell_{{L}^{1}_{t}(\dot B^{\frac{d}{p} + 1}_{p,1} )} \nn\\
&\qquad  +\| \delta_{\lambda}\|^\ell_{{L}^{1}_{t}(\dot B^{\frac{d}{p}}_{p,1} )} +  \| \mathbb{P} \bar{\tau}_{\lambda} \|_{{L}^{1}_{t}(\dot B^{\frac{d}{p} }_{p,1} )} + \| \mathbb{P} \bar{\tau}_{\lambda} \|_{{L}^{1}_{t}(\dot B^{\frac{d}{p} +2 }_{p,1} )}+  \| \u_{\lambda} \|^h_{{L}^{1}_{t}(\dot B^{\frac{d}{p}+1}_{p,1})}
	+ \|  \delta_{\lambda} \|^h_{{L}^{1}_{t}(\dot B^{\frac{d}{p} + 2}_{p,1})}\nn\\
	& \quad\le C(\| \u_0 \|^\ell_{\dot{B}^{\frac{d}{p}-1}_{p,1}} +\|  \delta_0\|^\ell_{\dot{B}^{\frac{d}{p}}_{p,1}}  +  \| (\Lambda \mathbf{u}_0,\mathbb{Q}\tau_0)\|^h_{\dot B^{\frac{d}{p}}_{p,1}} ) \nn\\
	  &\qquad+ C(\ea + \| \delta\|^\ell_{\widetilde{L}^{\infty}_t ({\dot B^{\frac{d}{p} }_{p,1}})} +  \| \delta\|^h_{\widetilde{L}^{\infty}_t ({\dot B^{\frac{d}{p} }_{p,1}})} + \| \mathbb{P} \bar{\tau} \|_{\widetilde{L}^{\infty}_t(\dot B^{\frac{d}{p} }_{p,1})} )\eb.
\end{align}
For any $t \le T^*$, we can deduce from\eqref{T*} that
\begin{align}\label{last_2}
		&  \| \u_{\lambda} \|^\ell_{\widetilde{L}^{\infty}_{t}(\dot B^{\frac{d}{p} - 1}_{p,1})} + \|  \delta_{\lambda}\|^\ell_{\widetilde{L}^{\infty}_{t}(\dot B^{\frac{d}{p}}_{p,1})}  + \| \mathbb{P} \bar{\tau}_{\lambda} \|_{\widetilde{L}^{\infty}_{t}(\dot B^{\frac{d}{p} }_{p,1})} + \| (\Lambda \u_{\lambda}, \delta_{\lambda})\|^h_{\widetilde{L}^{\infty}_{t}(\dot B^{\frac{d}{p}}_{p,1})}+  \| \u_{\lambda}\|^\ell_{{L}^{1}_{t}(\dot B^{\frac{d}{p} + 1}_{p,1} )} \nn\\
		&\qquad + \|  \delta_{\lambda}\|^\ell_{{L}^{1}_{t}(\dot B^{\frac{d}{p}}_{p,1} )}   +  \| \mathbb{P} \bar{\tau}_{\lambda} \|_{{L}^{1}_{t}(\dot B^{\frac{d}{p} }_{p,1} )} + \| \mathbb{P} \bar{\tau}_{\lambda} \|_{{L}^{1}_{t}(\dot B^{\frac{d}{p} + 2}_{p,1} )}
	+  \| \u_{\lambda} \|^h_{{L}^{1}_{t}(\dot B^{\frac{d}{p}+1}_{p,1})}
	+ \|  \delta_{\lambda} \|^h_{{L}^{1}_{t}(\dot B^{\frac{d}{p} + 2}_{p,1})} \nn\\
	&\qquad+ \frac{\lambda}{4} \Big( \| \u_{\lambda}  \|^\ell_{{L}^{1}_{t, f}(\dot B^{\frac{d}{p} - 1}_{p,1})} + \|  \delta_{\lambda} \|_{{L}^{1}_{t, f}(\dot B^{\frac{d}{p}}_{p,1})} + \| \mathbb{P} \bar{\tau}_{\lambda} \|_{{L}^{1}_{t, f}(\dot B^{\frac{d}{p}}_{p,1})} +  \| \u_{\lambda} \|^h_{{L}^{1}_{t, f}(\dot B^{\frac{d}{p}+1}_{p,1})} \Big)  \nn\\
		& \quad\le C(\| \u_0 \|^\ell_{\dot{B}^{\frac{d}{p}-1}_{p,1}} +\|  \delta_0\|^\ell_{\dot{B}^{\frac{d}{p}}_{p,1}}  +  \| (\Lambda \mathbf{u}_0,\mathbb{Q}\tau_0)\|^h_{\dot B^{\frac{d}{p}}_{p,1}} ).
\end{align}
From the definition \ref{chemin} of Chemin-Lerner norm,
\begin{align}\label{last_3}
	\| \u_{\lambda} \|_{\widetilde{L}^{\infty}_{t}(\dot B^{\frac{d}{p} - 1}_{p,1})}
%& \stackrel{\mathrm{def}}{=} \sum_{k \in \Z} 2^{k(\frac{d}{2} - 1)} \sup\limits_{0 \le t \le T^*} \| \dot{\Delta}_k \u_{\lambda}\|_{L^2(\R^d)}  \nn\\
	& = \sum_{k \in \Z} 2^{k(\frac{d}{p} - 1)} \sup\limits_{0 \le t \le T^*} \Big \{  \| \dot{\Delta}_k \u\|_{L^p(\R^d)} \exp^{-\lambda \int_{0}^{t} f(t') \,dt'} \Big \}\nn\\
	& \ge \sum_{k \in \Z} 2^{k(\frac{d}{p} - 1)} \sup\limits_{0 \le t \le T^*} \| \dot{\Delta}_k \u\|_{L^p(\R^d)} \cdot \exp^{-\lambda \int_{0}^{T^*} f(t') \,dt'} \nn\\
	& = \| \u \|_{\widetilde{L}^{\infty}_{t}(\dot B^{\frac{d}{p} - 1}_{p,1})} \cdot \exp\Big({-\lambda \int_{0}^{T^*} f(t') \,dt'}\Big).
\end{align}
Similarly,
\begin{align}\label{last_4}
\| \u_{\lambda} \|_{{L}^{1}_{t}(\dot B^{\frac{d}{p} + 1}_{p,1})} = \| \u \|_{{L}^{1}_{t}(\dot B^{\frac{d}{p} + 1}_{p,1})} \cdot \exp\Big({-\lambda \int_{0}^{T^*} f(t') \,dt'}\Big).
\end{align}
As a consequence, we can deduce from \eqref{last_2} that
\begin{align}\label{last_5}
  &\| \u\|^\ell_{\widetilde{L}^{\infty}_{t}(\dot B^{\frac{d}{p} - 1}_{p,1})} + \|\delta\|^\ell_{\widetilde{L}^{\infty}_{t}(\dot B^{\frac{d}{p} }_{p,1})}+ \| \mathbb{P} \bar{\tau} \|_{\widetilde{L}^{\infty}_{t}(\dot B^{\frac{d}{p} }_{p,1})} + \| (\Lambda \u, \delta)\|^h_{\widetilde{L}^{\infty}_{t}(\dot B^{\frac{d}{p}}_{p,1})} +  \| \u\|^\ell_{{L}^{1}_{t}(\dot B^{\frac{d}{p} + 1}_{p,1} )}   \nn\\
  &\qquad+ \|  \delta\|^\ell_{{L}^{1}_{t}(\dot B^{\frac{d}{p}}_{p,1} )}  + \| \mathbb{P} \bar{\tau} \|_{{L}^{1}_{t}(\dot B^{\frac{d}{p} }_{p,1} )}+  \| \mathbb{P} \bar{\tau} \|_{{L}^{1}_{t}(\dot B^{\frac{d}{p} + 2}_{p,1} )}
	+  \| \u \|^h_{{L}^{1}_{t}(\dot B^{\frac{d}{p}+1}_{p,1})}
	+ \|  \delta \|^h_{{L}^{1}_{t}(\dot B^{\frac{d}{p} + 2}_{p,1})}\nn\\
 &\le C(\| \u_0 \|^\ell_{\dot{B}^{\frac{d}{p}-1}_{p,1}} + \|\delta_0 \|^\ell_{\dot{B}^{\frac{d}{p}}_{p,1}} +  \| (\Lambda \mathbf{u}_0,\mathbb{Q}\tau_0)\|^h_{\dot B^{\frac{d}{p}}_{p,1}} ) \cdot \exp\Big({\lambda \int_{0}^{T^*} f(t') \,dt'}\Big).
\end{align}
According to the Lemma \ref{heat_kernel} and the definition of $ f(t)$, we have
\begin{align}\label{last_6}
	 \int_{0}^{T^*} f(t') \,dt' &= \int_{0}^{T^*} \Big(\| \mathbb{P} \tau_F\|_{\dot{B}^{\frac{d}{p}+2}_{p,1}} + \| \mathbb{P} \tau_F\|_{\dot{B}^{\frac{d}{p}}_{p,1}} \Big)dt'\nn\\
	 & \le C\| \mathbb{P} \tau_0\|_{\dot{B}^{\frac{d}{p}}_{p,1}}.
\end{align}

Hence, we conclude from \eqref{last_5} and \eqref{last_6}, for any $ 0 \le t \le T^*$,  that
\begin{align}\label{last_7}
	 &\| \u\|^\ell_{\widetilde{L}^{\infty}_{t}(\dot B^{\frac{d}{p} - 1}_{p,1})} + \|\delta\|^\ell_{\widetilde{L}^{\infty}_{t}(\dot B^{\frac{d}{p} }_{p,1})}+ \| \mathbb{P} \bar{\tau} \|_{\widetilde{L}^{\infty}_{t}(\dot B^{\frac{d}{p} }_{p,1})} + \| (\Lambda \u, \delta)\|^h_{\widetilde{L}^{\infty}_{t}(\dot B^{\frac{d}{p}}_{p,1})} +  \| \u\|^\ell_{{L}^{1}_{t}(\dot B^{\frac{d}{p} + 1}_{p,1} )}   \nn\\
	&\qquad+ \|  \delta\|^\ell_{{L}^{1}_{t}(\dot B^{\frac{d}{p}}_{p,1} )}  + \| \mathbb{P} \bar{\tau} \|_{{L}^{1}_{t}(\dot B^{\frac{d}{p} }_{p,1} )}+  \| \mathbb{P} \bar{\tau} \|_{{L}^{1}_{t}(\dot B^{\frac{d}{p} + 2}_{p,1} )}
	+  \| \u \|^h_{{L}^{1}_{t}(\dot B^{\frac{d}{p}+1}_{p,1})}
	+ \|  \delta \|^h_{{L}^{1}_{t}(\dot B^{\frac{d}{p} + 2}_{p,1})}\nn\\
	 &\quad\le C(\| \u_0 \|^\ell_{\dot{B}^{\frac{d}{p}-1}_{p,1}} + \|\delta_0 \|^\ell_{\dot{B}^{\frac{d}{p}}_{p,1}} +  \| (\Lambda \mathbf{u}_0,\mathbb{Q}\tau_0)\|^h_{\dot B^{\frac{d}{p}}_{p,1}} ) \cdot \exp\big (C\| \mathbb{P} \tau_0\|_{\dot{B}^{\frac{d}{p}}_{p,1}} \big).
\end{align}
Under the assumption of \eqref{X0}, we choose $ C_0$ large enough and $ c_0$ sufficiently small, there holds
\begin{align}\label{last}
	&\| \u\|^\ell_{\widetilde{L}^{\infty}_{t}(\dot B^{\frac{d}{p} - 1}_{p,1})} + \|\delta\|^\ell_{\widetilde{L}^{\infty}_{t}(\dot B^{\frac{d}{p} }_{p,1})}+ \| \mathbb{P} \bar{\tau} \|_{\widetilde{L}^{\infty}_{t}(\dot B^{\frac{d}{p} }_{p,1})} + \| (\Lambda \u, \delta)\|^h_{\widetilde{L}^{\infty}_{t}(\dot B^{\frac{d}{p}}_{p,1})} +  \| \u\|^\ell_{{L}^{1}_{t}(\dot B^{\frac{d}{p} + 1}_{p,1} )}   + \|  \delta\|^\ell_{{L}^{1}_{t}(\dot B^{\frac{d}{p}}_{p,1} )} \nn\\
	&\qquad + \| \mathbb{P} \bar{\tau} \|_{{L}^{1}_{t}(\dot B^{\frac{d}{p} }_{p,1} )}+  \| \mathbb{P} \bar{\tau} \|_{{L}^{1}_{t}(\dot B^{\frac{d}{p} + 2}_{p,1} )}
	+  \| \u \|^h_{{L}^{1}_{t}(\dot B^{\frac{d}{p}+1}_{p,1})}
	+ \|  \delta \|^h_{{L}^{1}_{t}(\dot B^{\frac{d}{p} + 2}_{p,1})}  \le \frac{c_0}{2}, \qquad \forall t \le T^*.
\end{align}
Which contradicts \eqref{T*}, hence we can extend $ T^* = \infty$. This completes the proof of Theorem \ref{theorem1.1}.

\hspace{15.7cm}$\square$

\section{The proof  of the global well-posedness of  Theorem \ref{theorem1.2}}
This section is devoted to the proof of the global well-posedness of  Theorem \ref{theorem1.2}. At first, we rewrite  system \eqref{2modle} into the following form:
\begin{eqnarray}\label{reformulate1}
\left\{\begin{aligned}
&\partial_t \u   +\nabla P -  \div \tau =  \mathbf{g}_1,\\
& \partial_t \tau  -  \Delta  \tau  - D(\u)  =  \mathbf{g}_2,
\end{aligned}\right.
\end{eqnarray}
where
\begin{align*}
&\mathbf{g}_1 = -\u\cdot\nabla \u;\quad
\mathbf{g}_2= - \u\cdot\nabla\tau - g_{\alpha}(\tau, \nabla \u).
\end{align*}
For clarity, the proof is divided into  four main subsections. The first three subsections are to make {\it a priori}
estimates of the solutions.
The last subsection is to complete the proof of  Theorem \ref{theorem1.2}.

For convenience, we denote $$ \widetilde{E}_1(t) \stackrel{\mathrm{def}}{=} \sup_{0 \le s \le t}\| (\u(s), \tau(s))\|^2_{H^3} + \int_{0}^{t} (\|\nabla \u(s) \|^2_{H^2} + \|\nabla \tau(s) \|^2_{H^3} )\, ds,$$
and $$\widetilde{E}_2(t) \stackrel{\mathrm{def}}{=} \sup_{0 \le s \le t} (1+s)^2 \| (\nabla \u(s), \nabla \tau(s))\|^2_{H^2} + \int_{0}^{t} (1+s)^2 (\|\nabla^2 \u(s) \|^2_{H^1} + \|\nabla^2 \tau(s) \|^2_{H^2}) \, ds .$$

\subsection{The dissipation of the viscoelastic stress tensor}
We show the first type of energy estimates in the following lemma which contains the dissipation estimate for $\tau.$
\begin{lemma}\label{L2}
		Let $(\u, \tau)$ be the solution of \eqref{reformulate1}, then there holds,  for any $t \ge 0$, that
\begin{align}\label{tauguji1}
			& \sup_{0 \le s \le t} \| (\u(s), \tau(s))\|^2_{H^3} + \int_{0}^{t}  \|\nabla \tau(s) \|^2_{H^3} \, ds \le C\widetilde{E}_1(0) + C\widetilde{E}_1(t) \widetilde{E}_2^{\frac{1}{2}} (t).
		\end{align}
\end{lemma}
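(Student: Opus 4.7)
The plan is to run a standard $H^3$ energy estimate on \eqref{reformulate1}, exploiting its symmetric coupling to extract the dissipation of $\nabla\tau$, then to couple the result with the decaying functional $\widetilde{E}_2$ through a time-weighted Cauchy--Schwarz.

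First, I would apply $\partial^\alpha$ for every multi-index with $|\alpha|\le 3$ to both equations of \eqref{reformulate1}, test the velocity equation with $\partial^\alpha\u$ and the stress equation with $\partial^\alpha\tau$, and sum. The pressure term vanishes by $\div\u=0$, while the linear coupling $\langle\partial^\alpha\div\tau,\partial^\alpha\u\rangle+\langle\partial^\alpha D(\u),\partial^\alpha\tau\rangle$ cancels after one integration by parts using the symmetry of $\tau$. Summing over $|\alpha|\le 3$ delivers the basic identity
\begin{align*}
\tfrac{1}{2}\tfrac{d}{dt}\|(\u,\tau)\|_{H^3}^2 + \|\nabla\tau\|_{H^3}^2 = \sum_{|\alpha|\le 3}\bigl(\langle\partial^\alpha\mathbf{g}_1,\partial^\alpha\u\rangle + \langle\partial^\alpha\mathbf{g}_2,\partial^\alpha\tau\rangle\bigr).
\end{align*}

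Next, I would handle the three nonlinear families separately. The transport pieces $\u\cdot\nabla\u$ and $\u\cdot\nabla\tau$ are skew at top order thanks to $\div\u=0$, so only Moser commutators survive and are bounded by $\|\nabla\u\|_{L^\infty}(\|\u\|_{H^3}^2+\|\tau\|_{H^3}^2) + \|\nabla\tau\|_{L^\infty}\|\u\|_{H^3}\|\tau\|_{H^3}$. The genuinely delicate contribution is $\langle\partial^\alpha g_\alpha(\tau,\nabla\u),\partial^\alpha\tau\rangle$ at $|\alpha|=3$: the Leibniz term $\tau\,\partial^\alpha\nabla\u$ would cost one derivative of $\u$ beyond what $\widetilde{E}_1$ controls. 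I would integrate by parts to transfer that extra $\nabla$ onto $\partial^\alpha\tau$, turning the contribution into $\|\tau\|_{L^\infty}\|\u\|_{H^3}\|\nabla\tau\|_{H^3} + \|\nabla\tau\|_{L^\infty}\|\u\|_{H^3}\|\tau\|_{H^3}$; the $\|\nabla\tau\|_{H^3}$ factor is then absorbed by the dissipation on the left via Young's inequality.

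After integrating in time on $[0,t]$, the surviving nonlinear terms all have the form $\int_0^t\varphi(s)\,\|(\u,\tau)(s)\|_{H^3}^2\,ds$, where $\varphi\in\{\|\nabla\u\|_{L^\infty},\|\nabla\tau\|_{L^\infty},\|\tau\|_{L^\infty},\ldots\}$. Using $\sup_{s\le t}\|(\u,\tau)\|_{H^3}^2\le\widetilde{E}_1(t)$, the remaining task is to show $\int_0^t\varphi(s)\,ds\le C\widetilde{E}_2^{1/2}(t)$. For this I would invoke the Gagliardo--Nirenberg inequality on $\T^d$ ($d=2,3$) applied to the mean-zero quantities $\nabla\u$ and $\nabla\tau$ to bound $\varphi(s)$ by products of the form $\|\nabla^2 f\|_{L^2}^\theta\|\nabla^3 f\|_{L^2}^{1-\theta}$ with $f\in\{\u,\tau\}$, and then apply Cauchy--Schwarz against the weight $1=(1+s)^{-1}(1+s)$:
\begin{align*}
\int_0^t\|\nabla^k f(s)\|_{L^2}\,ds\le \Bigl(\int_0^t(1+s)^{-2}\,ds\Bigr)^{\!1/2}\Bigl(\int_0^t(1+s)^2\|\nabla^k f(s)\|_{L^2}^2\,ds\Bigr)^{\!1/2}\le C\widetilde{E}_2^{1/2}(t)
\end{align*}
for $k=2,3$. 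Combining everything, the right-hand side of the energy identity is controlled by $C\widetilde{E}_1(t)\widetilde{E}_2^{1/2}(t)$, yielding \eqref{tauguji1} after adding the initial contribution $C\widetilde{E}_1(0)$.

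The main obstacle is the top-order interaction in $g_\alpha(\tau,\nabla\u)$: without the integration-by-parts trick that trades $\partial^\alpha\nabla\u$ against $\nabla\partial^\alpha\tau$, the estimate would require control of $\|\u\|_{H^4}$ that $\widetilde{E}_1$ simply does not provide. A secondary subtlety is that no single decay rate of $\varphi(s)$ is integrable on its own; the mechanism that delivers the coupling $\widetilde{E}_1\cdot\widetilde{E}_2^{1/2}$ is precisely the Cauchy--Schwarz splitting $1=(1+s)^{-1}\cdot(1+s)$ against the weighted norms encoded in $\widetilde{E}_2$.
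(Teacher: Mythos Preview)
Your overall strategy---the $H^3$ energy identity with symmetric cancellation of the linear coupling, integration by parts on the top-order $g_\alpha$ term, and the time-weighted Cauchy--Schwarz $(1+s)^{-1}(1+s)$ to produce the factor $\widetilde E_2^{1/2}$---is exactly the one the paper uses. There is, however, one genuine slip.

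After the integration by parts on $\langle\tau\,\partial^\alpha\nabla\u,\partial^\alpha\tau\rangle$ you obtain the piece $\|\tau\|_{L^\infty}\|\u\|_{H^3}\|\nabla\partial^\alpha\tau\|_{L^2}$ and then propose to absorb $\|\nabla\tau\|_{H^3}$ via Young, leaving $\|\tau\|_{L^\infty}^2\|\u\|_{H^3}^2$. But $\|\tau\|_{L^\infty}$ cannot be fed into your Gagliardo--Nirenberg/Poincar\'e machinery: $\tau$ has no reason to have zero mean on $\T^d$ (the equation for $\int_{\T^d}\tau$ contains $\int_{\T^d}g_\alpha(\tau,\nabla\u)$, which does not vanish), so $\|\tau\|_{L^\infty}$ is only controlled by $\widetilde E_1^{1/2}$ with no decay, and $\int_0^t\|\tau\|_{L^\infty}^2\,ds$ grows linearly in $t$. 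The claimed bound $\int_0^t\varphi\,ds\le C\widetilde E_2^{1/2}$ therefore fails precisely for $\varphi=\|\tau\|_{L^\infty}$ (or $\|\tau\|_{L^\infty}^2$).

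The fix is immediate and is what the paper does: do \emph{not} absorb. Keep the triple product and put the non-decaying factors $\|\tau\|_{L^\infty}\|\u\|_{H^3}$ into $\|(\u,\tau)\|_{H^3}^2\le\widetilde E_1(t)$, leaving $\|\nabla\partial^\alpha\tau\|_{L^2}\le\|\nabla^2\tau\|_{H^2}$ (note $|\alpha|=3$, so this is $\|\nabla^4\tau\|_{L^2}$) as the decaying factor. More generally, the paper writes $\widetilde J_{2,2}=-\langle g_\alpha,\tau\rangle+\sum_{k=1}^3\langle\nabla^{k-1}g_\alpha,\nabla^{k+1}\tau\rangle$ and bounds the whole thing by $\|(\u,\tau)\|_{H^3}^2(\|\nabla^2\u\|_{H^1}+\|\nabla^2\tau\|_{H^2})$. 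These last two quantities sit directly in the time-integrated part of $\widetilde E_2$, so the weighted Cauchy--Schwarz delivers $C\widetilde E_1(t)\widetilde E_2^{1/2}(t)$ with no logarithmic loss and no appeal to decay of $\|\tau\|_{L^\infty}$.
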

	\begin{proof}  Firstly, applying operator $\nabla^k(k = 0,1,2,3)$ to \eqref{reformulate1}, and taking the $L^2$-inner product  with respect to $\nabla^k \u$ and $\nabla^k \tau$ in $\T^d$, respectively. Then adding them up gives rise to
		\begin{align}\label{H3_1}
			\frac{1}{2} \frac{d}{dt} \| (\u, \tau)\|^2_{H^3}  + \| \nabla \tau\|^2_{H^3}  &=  \sum_{k=0}^{3} \langle \nabla^k \mathbf{g}_1, \nabla^k \u \rangle + \sum_{k=0}^{3} \langle \nabla^k \mathbf{g}_2, \nabla^k \tau \rangle \stackrel{\mathrm{def}}{=} \widetilde{J}_1 + \widetilde{J}_2 %- \sum_{k=0}^{3} \langle \nabla^k (g_{\alpha}(\tau, \nabla \u)), \nabla^k \tau \rangle.
		\end{align}
		where we have used
\begin{align}\label{fact1}
			\langle \nabla^k \div \tau, \nabla^k \u \rangle + \langle \nabla^k D(\u), \nabla^k \tau \rangle = 0, \quad \text{and} \quad \langle \nabla^k \u, \nabla^k \nabla P \rangle = 0.
		\end{align}
For the first term $\widetilde{J}_1$,
		according to Leibniz's formula and the embedding inequality $ H^1(\T^d) \hookrightarrow L^3(\T^d)$, we have:
\begin{eqnarray}\label{L2_g1}
\left\{\begin{aligned}
\widetilde{J}_1 &= -\langle \nabla \u \cdot \nabla \u, \nabla \u  \rangle - 2 \langle \nabla \u \cdot \nabla^2 \u, \nabla^2 \u  \rangle - 2 \langle \nabla \u \cdot \nabla^3 \u, \nabla^3 \u  \rangle - \langle \nabla^2 \u \cdot \nabla^2 \u, \nabla^3 \u  \rangle\nn\\
			& \lesssim \| \nabla \u\|_{L^\infty} \| \nabla \u\|_{L^2} \| \nabla \u\|_{L^2} + \| \nabla \u\|_{L^\infty} \| \nabla^2 \u\|^2_{H^1} + \| \nabla^2 \u\|_{L^3} \| \nabla^2 \u\|_{L^6} \| \nabla^3 \u\|_{L^2}\nn\\
			&\lesssim \| \u\|^2_{H^3} \| \nabla^2 \u\|_{H^1},\quad\hbox{if $d = 3;$}\\
\widetilde{J}_1 & \lesssim \| \nabla \u\|_{L^\infty} \| \nabla \u\|_{L^2} \| \nabla \u\|_{L^2} + \| \nabla \u\|_{L^\infty} \| \nabla^2 \u\|^2_{H^1} + \| \nabla^2 \u\|_{L^4} \| \nabla^2 \u\|_{L^4} \| \nabla^3 \u\|_{L^2}\nn\\
			&\lesssim \| \u\|^2_{H^3} \| \nabla^2 \u\|_{H^1},\quad\hbox{if $d = 2,$}
\end{aligned}\right.
\end{eqnarray}
		where we have the fact $ \int_{\T^d} \u \cdot \nabla \nabla^j \u \cdot \nabla^j \u \, dx = 0 (j \ge 0)$.

To bound  the second term $\widetilde{J}_2$, we first
		decompose it	into another two terms
		\begin{align}\label{L2_g2}
			\widetilde{J}_2 &= - \sum_{k=0}^{3} \langle \nabla^k (\u \cdot \nabla \tau), \nabla^k \tau \rangle - \sum_{k=0}^{3} \langle \nabla^k (g_{\alpha}(\tau, \nabla \u)), \nabla^k \tau \rangle\nn\\
			& \stackrel{\mathrm{def}}{=} \widetilde{J}_{2,1} + \widetilde{J}_{2,2}.
		\end{align}
		For the term $\widetilde{J}_{2,1}$, by applying integration by parts, H\"older's inequality, and embedding relation, we obtain
		\begin{align}\label{J21}
			\text{for} \quad d = 3, \quad \widetilde{J}_{2,1} &= - \langle \nabla \u \cdot \nabla \tau, \nabla \tau  \rangle -  \langle \nabla \u \cdot \nabla^2 \tau, \nabla^2 \tau  \rangle - \langle \nabla^2 \u \cdot \nabla \tau, \nabla^2 \tau  \rangle -  \langle \nabla \u \cdot \nabla^3 \tau, \nabla^3 \tau  \rangle \nn\\
			&\quad \quad  - \langle \nabla^2 \u \cdot \nabla^2 \tau, \nabla^3 \tau  \rangle  - \langle \nabla^3 \u \cdot \nabla \tau, \nabla^3 \tau  \rangle\nn\\
			& \lesssim \| \nabla \u\|_{L^\infty} \| \nabla \tau\|^2_{H^2}  + \|\nabla \tau \|_{L^\infty} (\| \nabla^2 \u\|_{L^2} \| \nabla^2 \tau\|_{L^2} + \| \nabla^3 \u\|_{L^2} \| \nabla^2 \tau\|_{L^2}) \nn\\
			&\quad  + \|\nabla^2 \u \|_{L^3} \| \nabla^2 \tau\|_{L^6} \| \nabla^3 \tau\|_{L^2}\nn\\
			&\lesssim \| (\u, \tau)\|^2_{H^3} (\| \nabla^2 \u\|_{H^1} + \| \nabla^2 \tau\|_{H^2});\\
			\text{for} \quad d = 2, \quad \widetilde{J}_{2,1} & \lesssim \| \nabla \u\|_{L^\infty} \| \nabla \tau\|^2_{H^2}  + \|\nabla \tau \|_{L^\infty} (\| \nabla^2 \u\|_{L^2} \| \nabla^2 \tau\|_{L^2} + \| \nabla^3 \u\|_{L^2} \| \nabla^2 \tau\|_{L^2}) \nn\\
			&\quad  + \|\nabla^2 \u \|_{L^4} \| \nabla^2 \tau\|_{L^4} \| \nabla^3 \tau\|_{L^2}\nn\\
			&\lesssim \| (\u, \tau)\|^2_{H^3} (\| \nabla^2 \u\|_{H^1} + \| \nabla^2 \tau\|_{H^2}).
		\end{align}
		Similarly, there holds
		\begin{align}\label{J22}
			\widetilde{J}_{2,2} &= - \langle g_{\alpha}(\tau, \nabla \u), \tau \rangle + \sum_{k=1}^{3} \langle \nabla^{k-1} g_{\alpha}(\tau, \nabla \u), \nabla^{k+1}\tau \rangle\nn\\
			&\lesssim \| (\u, \tau)\|^2_{H^3} (\| \nabla^2 \u\|_{H^1} + \| \nabla^2 \tau\|_{H^2}).
		\end{align}
		Inserting \eqref{L2_g1}, \eqref{J21} and \eqref{J22} into \eqref{H3_1} gives rise to
		\begin{align}\label{energy1}
			\frac{1}{2} \frac{d}{dt} \| (\u, \tau)\|^2_{H^3}  + \| \nabla \tau\|^2_{H^3} \le C\| (\u, \tau)\|^2_{H^3} (\| \nabla^2 \u\|_{H^1} + \| \nabla^2 \tau\|_{H^2}).
		\end{align}
		Integrating the above equation over the interval from $0$ to $t$, we obtain
		\begin{align}\label{half}
			& \sup_{0 \le s \le t} \| (\u(s), \tau(s))\|^2_{H^3} + \int_{0}^{t}  \|\nabla \tau(s) \|^2_{H^3} \, ds \nn\\
			&\quad\le C\widetilde{E}_1(0) + C\int_{0}^{t} \| (\u, \tau)\|^2_{H^3} (\| \nabla^2 \u\|_{H^1} + \| \nabla^2 \tau\|_{H^2}) \, ds\nn\\
			&\quad\le C\widetilde{E}_1(0) + C\widetilde{E}_1(t) \int_{0}^{t} (1+s)^{-1} (1+s) (\| \nabla^2 \u\|_{H^1} + \| \nabla^2 \tau\|_{H^2}) \, ds\nn\\
			&\quad \le C\widetilde{E}_1(0) + C\widetilde{E}_1(t) \bigg\{ \int_{0}^{t} (1+s)^{-2} \, ds\bigg\}^{\frac{1}{2}} \bigg\{ \int_{0}^{t} (1+s)^{2} (\| \nabla^2 \u\|^2_{H^1} + \| \nabla^2 \tau\|^2_{H^2}) \, ds\bigg\}^{\frac{1}{2}} \nn\\
			& \quad\le C\widetilde{E}_1(0) + C\widetilde{E}_1(t) \widetilde{E}_2^{\frac{1}{2}} (t).
		\end{align}
Consequently, we complete the proof of the Lemma \ref{L2}.

\end{proof}

\subsection{The dissipation of the velocity field}
\begin{lemma}\label{L2}
		Let $(\u, \tau)$ be the solution of \eqref{reformulate1}, then there holds,  for any $t \ge 0$, that
			\begin{align}\label{energy4}
			\frac{1}{4}\int_{0}^{t} \| \nabla \u(s)\|^2_{H^2} \, ds \le \| (\u_0, \tau_0)\|^2_{H^3} + \| (\u(t), \tau(t))\|^2_{H^3}  + 3\int_{0}^{t} \| \nabla \tau\|^2_{H^3} \, ds + C\widetilde{E}_1(t) \widetilde{E}_2^{\frac{1}{2}} (t).
		\end{align}
\end{lemma}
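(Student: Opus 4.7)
The plan is to exploit the coupling in the second equation of \eqref{reformulate1}, which we rewrite as $D(\u) = \partial_t \tau - \Delta \tau - \mathbf{g}_2$, in order to recover dissipation for $\u$ from the already-known dissipation of $\tau$. Concretely, for each $k = 0,1,2$ I will apply $\nabla^k$ to the stress equation and take the $L^2$ inner product with $\nabla^k D(\u)$. Because $\u$ is divergence-free, one has the identity
\begin{align*}
\|\nabla^k D(\u)\|_{L^2}^2 = \tfrac{1}{2}\,\|\nabla^{k+1}\u\|_{L^2}^2,
\end{align*}
so after summing in $k$ the left-hand side produces $\tfrac12\int_0^t \|\nabla\u\|_{H^2}^2\,ds$, which is the quantity we need.

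For the time-derivative piece, I would integrate by parts in time to get
\begin{align*}
\int_0^t \langle \partial_t \nabla^k \tau,\nabla^k D(\u)\rangle\,ds
= \langle \nabla^k \tau,\nabla^k D(\u)\rangle\Big|_0^t - \int_0^t \langle \nabla^k \tau, \nabla^k D(\partial_t \u)\rangle\,ds.
\end{align*}
The boundary contribution is absorbed by the first two terms on the right-hand side of the claimed estimate, using Young's inequality against a small fraction (say $1/4$) of the LHS dissipation. Since $\tau$ is symmetric and $\u$ is divergence-free, the inner piece rewrites as $\langle \div \nabla^k \tau, \nabla^k \partial_t \u\rangle$. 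Substituting $\partial_t \u = -\nabla P - \u\cdot\nabla\u + \div\tau$ and using $\div \u = 0$ to kill the pressure contribution produces the principal term $\int_0^t \|\div \nabla^k\tau\|_{L^2}^2\,ds$, which is exactly of the form $\int_0^t\|\nabla\tau\|_{H^3}^2\,ds$ on the right-hand side. The $\Delta$-term $\int_0^t\langle -\Delta\nabla^k\tau,\nabla^k D(\u)\rangle\,ds$ is handled by Cauchy–Schwarz: a small constant times $\int_0^t\|\nabla^{k+1}\u\|_{L^2}^2\,ds$ is sent back to the LHS, and the remainder is bounded by $\int_0^t\|\nabla\tau\|_{H^3}^2\,ds$ (hence the coefficient $3$ on that term).

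The nonlinear contributions come from the two places where we introduced $\mathbf{g}_1$ and $\mathbf{g}_2$: the direct term $\int_0^t\langle\nabla^k\mathbf{g}_2,\nabla^k D(\u)\rangle\,ds$ and the term $\int_0^t\langle\div\nabla^k\tau,\nabla^k\mathbf{g}_1\rangle\,ds$ generated when replacing $\partial_t\u$. These are estimated in the same spirit as $\widetilde{J}_1,\widetilde{J}_{2,1},\widetilde{J}_{2,2}$ in Lemma 4.1, via H\"older and the Sobolev embeddings $H^1(\mathbb{T}^d)\hookrightarrow L^3$ (or $L^4$ when $d=2$), giving bounds of the form $\|(\u,\tau)\|_{H^3}^2\bigl(\|\nabla^2\u\|_{H^1}+\|\nabla^2\tau\|_{H^2}\bigr)$. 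Inserting the split $1 = (1+s)^{-1}(1+s)$ and applying Cauchy–Schwarz in time exactly as in \eqref{half} converts this into $C\widetilde{E}_1(t)\widetilde{E}_2^{1/2}(t)$.

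The main obstacle I expect is the careful bookkeeping of integrations by parts in both space and time at each derivative level $k$: matching the symmetry of $\tau$ with $D(\u)$ so that everything reduces to pairings against $\div\nabla^k\tau$, verifying the pressure pairings vanish order by order thanks to $\div\u=0$, and choosing the absorption constants (including a Young's inequality on the boundary term $\langle\nabla^k\tau,\nabla^k D(\u)\rangle(t)$) tightly enough to leave the factor $1/4$ in front of $\int_0^t\|\nabla\u\|_{H^2}^2\,ds$ on the left while not inflating the coefficient $3$ on $\int_0^t\|\nabla\tau\|_{H^3}^2\,ds$ on the right. Once these are arranged, summing over $k=0,1,2$ yields exactly \eqref{energy4}.
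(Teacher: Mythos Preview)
Your proposal is correct and is essentially the same argument as the paper's: both extract the velocity dissipation via the cross inner product $\langle \div\tau,\u\rangle$ and its higher-order analogues. The only difference is cosmetic---the paper applies $\div$ to the stress equation and pairs both equations with $\nabla^k\u$ and $\nabla^k\div\tau$ to generate $-\frac{d}{dt}\langle\nabla^k\div\tau,\nabla^k\u\rangle$ directly, while you pair the stress equation with $\nabla^k D(\u)$ and then integrate by parts in time; since $\langle\nabla^k\tau,\nabla^k D(\u)\rangle = -\langle\nabla^k\div\tau,\nabla^k\u\rangle$ for symmetric $\tau$, the two routes coincide term by term. One minor simplification: the boundary term $\langle\nabla^k\tau,\nabla^k D(\u)\rangle(t)$ is already controlled by $\|(\u,\tau)(t)\|_{H^3}^2$ without any absorption into the left-hand side, so the Young-inequality step you anticipate there is not needed.
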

\begin{proof}
		 Applying the  projection operator $\p$ to the first equation of \eqref{reformulate1} and the $\div $ operator to the second equation of \eqref{reformulate1}, we obtain the following new system:
		\begin{eqnarray}\label{reformulate2}
		\left\{\begin{aligned}
		&\partial_t \u   -  \p \div \tau =  -\p(\u \cdot \nabla \u),\\
		& \partial_t \div  \tau  -  \Delta  \div  \tau  - \frac{1}{2} \Delta \u = - \div (\u \cdot \nabla \tau) - \div ( g_{\alpha}(\tau, \nabla \u)).
		\end{aligned}\right.
		\end{eqnarray}
		Applying the  operator $\nabla^k( k=0,1,2)$ to the aforementioned system and then taking the $L^2$	inner products with $\nabla^k \tau$ and $\nabla^k \u$ respectively. Then, adding these two results together, we obtain
		\begin{align*}
			\frac{1}{2} \| \nabla^{k+1} \u\|^2_{L^2} &= - \frac{d}{dt} \langle \nabla^k \div  \tau, \nabla^k \u \rangle + \langle \nabla^k \div  \tau, \nabla^k \p \div  \tau \rangle - \langle \nabla^k \div ( \u \cdot \nabla\tau), \nabla^k \u\rangle\nn\\
			&\quad - \langle \nabla^k \div ( g_{\alpha}(\tau, \nabla\u)), \nabla^k \u\rangle + \langle \nabla^k \Delta \div  \tau, \nabla^k \u\rangle - \langle \nabla^k \div  \tau, \nabla^k \p(\u \cdot\nabla \u)\rangle.
		\end{align*}
		Next, summing the above equation over $0\le k\le 2$ gives rise to
		\begin{align}\label{energy2}
			\frac{1}{2} \| \nabla \u\|^2_{H^2} \le - \sum_{k=0}^{2}\frac{d}{dt} \langle \nabla^k \div  \tau, \nabla^k \u \rangle + \| \nabla \tau\|^2_{H^2} + \widetilde{K}_1 + \widetilde{K}_2 + \widetilde{K}_3 + \widetilde{K}_4.
		\end{align}
		It follows from the H\"older inequality that
		\begin{align}\label{K1}
			\widetilde{K}_1 &\le \|\u \|_{L^\infty}  \| \u\|_{H^1} \|\nabla \tau \|_{H^1}   + \| \nabla \u\|_{L^2} ( \| (\u, \nabla \u)\|_{L^\infty} \| \nabla^2 \tau\|_{H^1} + \|\nabla^2 \u \|_{L^2} \| \nabla \tau\|_{L^\infty})\nn\\
			&\lesssim \| (\u, \tau)\|^2_{H^3} (\| \nabla^2 \u\|_{H^1} + \| \nabla^2 \tau\|_{H^2}).
		\end{align}
		Similarly, by integration by parts, one can get
		\begin{align}\label{K2}
			\widetilde{K}_2 = \sum_{k=0}^{2} \langle \nabla^k ( g_{\alpha}(\tau, \nabla\u)), \nabla^{k+1} \u\rangle \lesssim \| (\u, \tau)\|^2_{H^3} (\| \nabla^2 \u\|_{H^1} + \| \nabla^2 \tau\|_{H^2}),
		\end{align}
		and
		\begin{align}\label{K4}
			\widetilde{K}_4 \lesssim \| (\u, \tau)\|^2_{H^3} (\| \nabla^2 \u\|_{H^1} + \| \nabla^2 \tau\|_{H^2}).
		\end{align}
		For the term $\widetilde{K}_3$, applying integration by parts and the Young inequality directly, we obtain
		\begin{align}\label{K3}
			\widetilde{K}_3 &= - \sum_{k=0}^{2} \langle \nabla^k \Delta  \tau, \nabla^{k+1} \u\rangle \le \| \nabla \u\|_{H^2} \| \nabla^2 \tau\|_{H^2}\nn\\
			& \le \frac{1}{4} \| \nabla \u\|^2_{H^2} + \| \nabla^2 \tau\|^2_{H^2}.
		\end{align}
		Substituting \eqref{K1}--\eqref{K3} into \eqref{energy2} leads to 
		\begin{align}\label{energy3}
			\frac{1}{4} \| \nabla \u\|^2_{H^2} \le - \sum_{k=0}^{2}\frac{d}{dt} \langle \nabla^k \div  \tau, \nabla^k \u \rangle + 3\| \nabla \tau\|^2_{H^2} + C\| (\u, \tau)\|^2_{H^3} (\| \nabla^2 \u\|_{H^1} + \| \nabla^2 \tau\|_{H^2}).
		\end{align}
		Integrating the above equation over the interval from $0$ to $t$ gives rise to \eqref{energy4}, which completes the proof.
	\end{proof}

\subsection{Time-weighted energy estimates}
\begin{lemma}\label{le_H3}
	Let $(\u, \tau)$ be the solution of \eqref{reformulate1}, then there holds that for any $t \ge 0$
	\begin{align}\label{E2}
		\widetilde{E}_2(t) \le C\widetilde{E}_2(0) + C\widetilde{E}_1(t) +C \widetilde{E}_2(t) \widetilde{E}_1^{\frac{1}{2}} (t).
	\end{align}
	\begin{proof}
		The proof of this lemma is divided into  two main steps. The first step is to estimate the basic energy in $\widetilde{E}_2(t)$ (which does not include the dissipation of $\u$). Firstly, applying operator $\nabla^k(k = 1,2,3)$ to \eqref{reformulate1}, and taking the $L^2$-inner product  with respect to $\nabla^k \u$ and $\nabla^k \tau$ in $\T^d$, respectively. Then adding them up gives rise to
		\begin{align}\label{H3_2}
		\frac{1}{2} \frac{d}{dt} \| (\nabla \u, \nabla \tau)\|^2_{H^2}  + \| \nabla^2 \tau\|^2_{H^2}  &=  \sum_{k=1}^{3} \langle \nabla^k \mathbf{g}_1, \nabla^k \u \rangle + \sum_{k=1}^{3} \langle \nabla^k \mathbf{g}_2, \nabla^k \tau \rangle  %- \sum_{k=0}^{3} \langle \nabla^k (g_{\alpha}(\tau, \nabla \u)), \nabla^k \tau \rangle.
		\end{align}
		where we have used  \eqref{fact1}.

		Multiplying \eqref{H3_2} by $(1+t)^2$   implies that
		\begin{align}\label{H3_3}
			& \frac{1}{2} \frac{d}{dt} (1+t)^2 \| (\nabla \u, \nabla \tau)\|^2_{H^2}  + (1+t)^2 \| \nabla^2 \tau\|^2_{H^2}\nn\\
			  &\quad= (1+t)\| (\nabla \u, \nabla \tau)\|^2_{H^2} - (1+t)^2 \sum_{k=1}^{3} \langle \nabla^k (\u \cdot \nabla \u), \nabla^k \u \rangle - (1+t)^2 \sum_{k=1}^{3} \langle \nabla^k (\u \cdot \nabla \tau), \nabla^k \tau \rangle\nn\\
			  & \quad\quad - (1+t)^2 \sum_{k=1}^{3} \langle \nabla^k (g_{\alpha}(\tau, \nabla \u)), \nabla^k \tau \rangle \nn\\
			  &\quad \stackrel{\mathrm{def}}{=} \widetilde{M}_1 + \widetilde{M}_2 + \widetilde{M}_3 + \widetilde{M}_4.
		\end{align}
		 For the first term $ \widetilde{M}_1$, due to the Poincar\'e inequality, we have
		 \begin{align}\label{L_1}
		 	&\int_{0}^{t} (1+s) \| (\nabla \u(s), \nabla \tau(s))\|^2_{H^2} \, ds\nn\\
&\quad\le 	C\int_{0}^{t} (1+s) ( \| \nabla \u(s)\|_{H^2} \| \nabla^3 \u(s)\|_{L^2} + \| \nabla \tau(s)\|_{H^2} \| \nabla^3 \tau(s)\|_{L^2})  \, ds \nn\\
		 	& \quad\le C\biggl\{ \int_{0}^{t} \| \nabla \u(s)\|^2_{H^2} \, ds\biggr\}^{\frac{1}{2}} \biggl\{ \int_{0}^{t}(1+s)^2 \| \nabla^2 \u(s)\|^2_{H^1} \, ds\biggr\}^{\frac{1}{2}} \nn\\
		 	& \quad\quad +C\biggl\{ \int_{0}^{t} \| \nabla \tau(s)\|^2_{H^3} \, ds\biggr\}^{\frac{1}{2}} \biggl\{ \int_{0}^{t}(1+s)^2 \| \nabla^2 \tau(s)\|^2_{H^2} \, ds\biggr\}^{\frac{1}{2}} \nn\\
		 	& \quad\le C \widetilde{E}_1^{\frac{1}{2}}(t) \widetilde{E}_2^{\frac{1}{2}}(t).
		 \end{align}
		 According to H\"older's inequality and embedding relations, one gets that
		 \begin{align}\label{L_2}
		 	\text{for} \quad d = 3, \quad \widetilde{M}_2 =& -(1+t)^2 \big( \langle \nabla \u \cdot \nabla \u, \nabla \u\rangle + 2  \langle \nabla \u \cdot \nabla^2 \u, \nabla^2 \u\rangle \nn\\
&+ 2 \langle \nabla \u \cdot \nabla^3 \u, \nabla^3 \u\rangle +  \langle \nabla^2 \u \cdot \nabla^2 \u, \nabla^3 \u\rangle\big) \nn\\
		 	\lesssim&  (1+t)^2 ( \|\u\|_{L^\infty} \|\nabla \u \|^2_{H^2} + \| \nabla^2 \u\|_{L^3} \| \nabla^2 \u\|_{L^6} \| \nabla^3 \u\|_{L^2})\nn\\
		 	\lesssim&  (1+t)^2 \| \u\|_{H^3} \| \nabla^2 \u\|^2_{H^1};\\
		 	\text{for} \quad d = 2, \quad \widetilde{L}_2 & \lesssim (1+t)^2 ( \|\u\|_{L^\infty} \|\nabla \u \|^2_{H^2} + \| \nabla^2 \u\|_{L^4} \| \nabla^2 \u\|_{L^4} \| \nabla^3 \u\|_{L^2})\nn\\
		 	\lesssim&  (1+t)^2 \| \u\|_{H^3} \| \nabla^2 \u\|^2_{H^1}.
		 \end{align}
		 Similarly, we can also obtain:
		 \begin{align}\label{L_3}
		 	\widetilde{M}_3 \lesssim (1+t)^2 \| (\u, \tau)\|_{H^3} \| \nabla^2 \tau\|^2_{H^2},
		 \end{align}
		 and
		 \begin{align}\label{L_4}
		 	\widetilde{M}_4 &= (1+t)^2 \sum_{k=1}^{3} \langle \nabla^{k-1} (g_{\alpha}(\tau, \nabla \u)), \nabla^{k+1} \tau \rangle\nn\\ &\lesssim (1+t)^2 \| (\u, \tau)\|_{H^3} (\| \nabla^2 \u\|^2_{H^1} +\| \nabla^2 \tau\|^2_{H^2}).
		 \end{align}
		 Integrating Equation \eqref{H3_3} over the interval from $0$ to $t$ and combining it with \eqref{L_1}--\eqref{L_4}, we obtain
		 \begin{align}\label{basic2}
		 	& \sup_{0 \le s \le t} (1+s)^2 \| (\nabla \u(s), \nabla \tau(s))\|^2_{H^2} + \int_{0}^{t} (1+s)^2  \|\nabla^2 \tau(s) \|^2_{H^2} \, ds\nn\\
		 	& \quad\le C\widetilde{E}_1(0) + C \widetilde{E}_1^{\frac{1}{2}}(t) \widetilde{E}_2^{\frac{1}{2}}(t) + C \widetilde{E}_1^{\frac{1}{2}}(t) \widetilde{E}_2(t).
		 \end{align}
		 \noindent { $\mathrm{\mathbf{Step 2.}}$} The second step is to estimate the supplementary dissipation in $\widetilde{E}_2(t)$ with respect to $\u$.
		 	Applying the $\nabla^k( k=1,2)$ operator to  system \eqref{reformulate2} and then taking the $L^2$	inner products with $\nabla^k \tau$ and $\nabla^k \u$ respectively. Then, adding these two results together, we obtain:
		 \begin{align*}
		 \frac{1}{2} \| \nabla^{k+1} \u\|^2_{L^2} &= - \frac{d}{dt} \langle \nabla^k \div  \tau, \nabla^k \u \rangle + \langle \nabla^k \div  \tau, \nabla^k \p \div  \tau \rangle - \langle \nabla^k \div ( \u \cdot \nabla\tau), \nabla^k \u\rangle\nn\\
		 &\quad - \langle \nabla^k \div ( g_{\alpha}(\tau, \nabla\u)), \nabla^k \u\rangle + \langle \nabla^k \Delta \div  \tau, \nabla^k \u\rangle - \langle \nabla^k \div  \tau, \nabla^k \p(\u \cdot\nabla \u)\rangle.
		 \end{align*}
		 Next, summing the above equation over $1\le k\le 2$ gives rise to
		 \begin{align}\label{energy}
		 \frac{1}{2} \| \nabla^2 \u\|^2_{H^1} \le& - \sum_{k=1}^{2}\frac{d}{dt} \langle \nabla^k \div  \tau, \nabla^k \u \rangle + \| \nabla^2 \tau\|^2_{H^1}+\sum_{k=1}^{2}\Big(- \langle \nabla^k \div ( \u \cdot \nabla\tau), \nabla^k \u\rangle\nn\\
&- \langle \nabla^k \div ( g_{\alpha}(\tau, \nabla\u)), \nabla^k \u\rangle + \langle \nabla^k \Delta \div  \tau, \nabla^k \u\rangle - \langle \nabla^k \div  \tau, \nabla^k \p(\u \cdot\nabla \u)\rangle\Big).
		 \end{align}
		 The last term in \eqref{energy} can be bounded  similarly to $\widetilde{K}_1$--$\widetilde{K}_4$,  one can get
		 \begin{align}\label{energy5}
		 	\frac{1}{4} \| \nabla^2 \u\|^2_{H^1} \le - \sum_{k=1}^{2}\frac{d}{dt} \langle \nabla^k \div  \tau, \nabla^k \u \rangle + 3\| \nabla^2 \tau\|^2_{H^1} + C\| (\u, \tau)\|_{H^3} (\| \nabla^2 \u\|^2_{H^1} + \| \nabla^2 \tau\|^2_{H^2}).
		 \end{align}
		 Multiplying the above expression by $(1+t)^2$ implies that
		 \begin{align}\label{energy6}
		 	& \frac{1}{4}  (1+t)^2 \| \nabla^2 \u\|^2_{H^1}  \nn\\
		 	&\quad\le - \sum_{k=1}^{2}\frac{d}{dt} (1+t)^2 \langle \nabla^k \div  \tau, \nabla^k \u \rangle +2(1+t) \sum_{k=1}^{2} \langle \nabla^k \div  \tau, \nabla^k \u \rangle   \nn\\
		 	&\quad\quad + 3(1+t)^2\| \nabla^2 \tau\|^2_{H^1} + C(1+t)^2\| (\u, \tau)\|_{H^3} (\| \nabla^2 \u\|^2_{H^1} + \| \nabla^2 \tau\|^2_{H^2}).
		 \end{align}
		 Integrating the above expression over the interval from $0$ to $t$, we have
		 \begin{align}\label{energy7}
		 	& \frac{1}{4} \int_{0}^{t} (1+s)^2 \| \nabla^2 \u(s)\|^2_{H^1}  \, ds \nn\\
		 	&\quad\le \widetilde{E}_2(0) + (1+t)^2 (\| \nabla \u\|^2_{H^1} + \| \nabla^2 \tau\|^2_{H^1}) + C \int_{0}^{t} (1+s) \| \nabla \u\|_{H^2} \| \nabla^2 \tau\|_{H^2} \, ds \nn\\
		 	&\quad\quad + 3 \int_{0}^{t} (1+s)^2 \| \nabla^2 \tau\|^2_{H^2}\, ds + C \widetilde{E}_1^{\frac{1}{2}}(t) \widetilde{E}_2(t)\\
		 	& \quad\le \widetilde{E}_2(0) + C\widetilde{E}_1^{\frac{1}{2}} (t) (\widetilde{E}_2^{\frac{1}{2}} (t) + \widetilde{E}_2(t)) + (1+t)^2 \| (\nabla \u, \nabla^2 \tau)\|^2_{H^1} + 3\int_{0}^{t} (1+s)^2 \| \nabla^2 \tau\|^2_{H^2}\, ds \nn
		 \end{align}
		 where we have used the fact
		 \begin{align*}
		 	\int_{0}^{t} (1+s) \| \nabla \u\|_{H^2} \| \nabla^2 \tau\|_{H^2} \, ds &\le \biggl\{\int_{0}^{t} \| \nabla \u\|^2_{H^2}  \, ds \biggr\}^{\frac{1}{2}} \biggl\{\int_{0}^{t} (1+s)^2 \| \nabla^2 \tau\|^2_{H^2}  \, ds \biggr\}^{\frac{1}{2}}\nn\\
		 	& \le \widetilde{E}_1^{\frac{1}{2}} (t) \widetilde{E}_2^{\frac{1}{2}} (t).
		 \end{align*}
		 Multiplying \eqref{basic2} by $4$ and adding it to \eqref{energy7}  gives
		 \begin{align}\label{energy8}
		 	\widetilde{E}_2(t) &\le C\widetilde{E}_2(0) + C\widetilde{E}_1^{\frac{1}{2}} (t) (\widetilde{E}_2^{\frac{1}{2}} (t) + \widetilde{E}_2(t))\nn\\
		 	& \le C\widetilde{E}_2(0) + \frac{1}{2}\widetilde{E}_2(t) + C\widetilde{E}_1(t) + C\widetilde{E}_1^{\frac{1}{2}} (t) \widetilde{E}_2(t)
		 \end{align}
		 which implies \eqref{E2}, this completes the proof of Lemma \ref{le_H3}.
		\end{proof}
\end{lemma}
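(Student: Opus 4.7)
The plan is to derive two weighted energy identities: a basic energy identity for $(\nabla\u,\nabla\tau)$ in $H^2$ which produces dissipation only for $\nabla\tau$, and a supplementary identity that recovers the missing dissipation for $\u$ by exploiting the coupled structure $\partial_t\u-\p\,\div\tau=\cdots$ and $\partial_t\,\div\tau-\Delta\,\div\tau-\tfrac12\Delta\u=\cdots$. Multiplying each by the weight $(1+t)^2$ will force two extra obstructions: a time-derivative-of-weight term proportional to $(1+t)\|(\nabla\u,\nabla\tau)\|^2_{H^2}$, and the usual cross term $\langle\nabla^k\div\tau,\nabla^k\u\rangle$ boundary piece. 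Both must be shown to reduce to the announced right-hand side $C\widetilde{E}_2(0)+C\widetilde{E}_1(t)+C\widetilde{E}_2(t)\widetilde{E}_1^{1/2}(t)$.

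First I would apply $\nabla^k$ ($k=1,2,3$) to the reformulated system \eqref{reformulate1}, pair with $\nabla^k\u$ and $\nabla^k\tau$ respectively and sum. The cancellations $\langle\nabla^k\div\tau,\nabla^k\u\rangle+\langle\nabla^k D(\u),\nabla^k\tau\rangle=0$ and $\langle\nabla^k\u,\nabla^k\nabla P\rangle=0$ produce the identity
\begin{align*}
\tfrac12\tfrac{d}{dt}\|(\nabla\u,\nabla\tau)\|^2_{H^2}+\|\nabla^2\tau\|^2_{H^2}=\sum_{k=1}^{3}\langle\nabla^k\mathbf{g}_1,\nabla^k\u\rangle+\sum_{k=1}^{3}\langle\nabla^k\mathbf{g}_2,\nabla^k\tau\rangle.
\end{align*}
Multiplying by $(1+t)^2$ and integrating over $[0,t]$, the weight-derivative term $\int_0^t 2(1+s)\|(\nabla\u,\nabla\tau)\|^2_{H^2}\,ds$ I will handle by writing $\|\nabla\cdot\|^2_{H^2}\lesssim\|\nabla\cdot\|_{H^2}\|\nabla^2\cdot\|_{H^1}$ (via the Poincar\'e-type trick already visible in \eqref{L_1}: distribute one factor to an unweighted $L^2_t$ norm controlled by $\widetilde{E}_1(t)^{1/2}$ and the other to a weighted $L^2_t$ norm controlled by $\widetilde{E}_2(t)^{1/2}$) to obtain a $C\widetilde{E}_1^{1/2}(t)\widetilde{E}_2^{1/2}(t)$ contribution. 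The nonlinear terms $\widetilde{M}_2,\widetilde{M}_3,\widetilde{M}_4$ are estimated by H\"older and Sobolev embedding (separating $d=2,3$ as in the proof of Lemma \ref{L2}), yielding bounds of the form $(1+t)^2\|(\u,\tau)\|_{H^3}(\|\nabla^2\u\|^2_{H^1}+\|\nabla^2\tau\|^2_{H^2})$, whose time integral is controlled by $\widetilde{E}_1^{1/2}(t)\widetilde{E}_2(t)$. This gives the weighted basic bound
\begin{align*}
\sup_{0\le s\le t}(1+s)^2\|(\nabla\u,\nabla\tau)(s)\|^2_{H^2}+\int_0^t(1+s)^2\|\nabla^2\tau\|^2_{H^2}\,ds\lesssim \widetilde{E}_2(0)+\widetilde{E}_1^{1/2}(t)\widetilde{E}_2^{1/2}(t)+\widetilde{E}_1^{1/2}(t)\widetilde{E}_2(t).
\end{align*}

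Next, to recover the dissipation of $\u$, I would apply $\p$ to the $\u$-equation and $\div$ to the $\tau$-equation (as in \eqref{reformulate2}), then for $k=1,2$ pair $\partial_t\u-\p\,\div\tau=\cdots$ with $\nabla^{2k}\div\tau$ (integration by parts) and the second equation suitably, leading to the identity $\tfrac12\|\nabla^{k+1}\u\|^2_{L^2}=-\tfrac{d}{dt}\langle\nabla^k\div\tau,\nabla^k\u\rangle+\text{lower-order terms}$. Summing over $k=1,2$ and multiplying by $(1+t)^2$, integration in time transfers the $\tfrac{d}{dt}$ onto the weight, producing the cross term $\int_0^t 2(1+s)\langle\nabla^k\div\tau,\nabla^k\u\rangle\,ds$. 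By Cauchy-Schwarz this is bounded by $\int_0^t(1+s)\|\nabla\u\|_{H^2}\|\nabla^2\tau\|_{H^2}\,ds\le\widetilde{E}_1^{1/2}(t)\widetilde{E}_2^{1/2}(t)$, while the boundary contribution at time $t$ gives $(1+t)^2\|(\nabla\u,\nabla^2\tau)\|^2_{H^1}$ which is already controlled by the weighted basic bound just obtained. The remainder term $3\int_0^t(1+s)^2\|\nabla^2\tau\|^2_{H^2}\,ds$ is absorbed by the dissipation term produced in the basic identity after multiplying that identity by a sufficiently large constant.

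Finally I would add the two weighted estimates together with the appropriate coefficient on the basic one, yielding
\begin{align*}
\widetilde{E}_2(t)\le C\widetilde{E}_2(0)+C\widetilde{E}_1^{1/2}(t)\widetilde{E}_2^{1/2}(t)+C\widetilde{E}_1^{1/2}(t)\widetilde{E}_2(t),
\end{align*}
and Young's inequality $C\widetilde{E}_1^{1/2}(t)\widetilde{E}_2^{1/2}(t)\le\tfrac12\widetilde{E}_2(t)+C\widetilde{E}_1(t)$ absorbs a half of $\widetilde{E}_2(t)$ on the left, producing \eqref{E2}. The main obstacle I anticipate is the book-keeping of the two weight-derivative terms simultaneously: they are not themselves quadratic-small, so each must be split by Cauchy-Schwarz into an unweighted factor controlled purely by $\widetilde{E}_1(t)$ and a weighted factor controlled by $\widetilde{E}_2(t)^{1/2}$; getting both factors into the correct norms (in particular, ensuring the unweighted factor only involves quantities bounded in $\widetilde{E}_1(t)$ and not $\widetilde{E}_2(t)$) is what makes the final estimate close.
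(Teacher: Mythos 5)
Your proposal is correct and follows essentially the same route as the paper: the same two-step structure (weighted basic energy identity for $(\nabla\u,\nabla\tau)$ with only $\tau$-dissipation, then the cross-pairing of \eqref{reformulate2} to recover $\|\nabla^2\u\|_{H^1}^2$), the same Poincar\'e/Cauchy--Schwarz splitting of the weight-derivative terms into an unweighted $\widetilde{E}_1^{1/2}$ factor times a weighted $\widetilde{E}_2^{1/2}$ factor, and the same final absorption via a large constant on the basic estimate plus Young's inequality. No gaps.
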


\subsection{Completion the proof of Theorem \ref{theorem1.2}}
With the a priori estimates in the previous section, we shall use the continuity argument to complete the proof of the global well posedness of Theorem \ref{theorem1.2}. At first, we define $ \widetilde{E}_{total}(t)$ by
\begin{align}\label{Etotal}
 \widetilde{E}_{total}(t) \stackrel{\mathrm{def}}{=} \widetilde{E}_1(t) + \widetilde{E}_2(t).
\end{align}
		Multiplying \eqref{tauguji1} by $4$ and adding it to \eqref{energy4}  implies 
	\begin{align}\label{E1}
			\widetilde{E}_1(t) \le C\widetilde{E}_1(0) + C \widetilde{E}_1(t) \widetilde{E}_2^{\frac{1}{2}} (t).
	\end{align}

Multiplying Equation \eqref{E1} by $2C$ and adding it to Equation \eqref{E2}, we obtain:
\begin{align}\label{Etotal1}
	\widetilde{E}_{total}(t) \le C\widetilde{E}_{total}(0) + C \widetilde{E}_{total}^{\frac{3}{2}}(t).
\end{align}

According to the theory of local well-posedness of \eqref{reformulate1}, applying the bootstrapping argument to the above equation, this completes the proof of the global well--posedness part of Theorem \ref{theorem1.2}. Here we omit the details for convenience.

\section{The proof of the decay rates of Theorem \ref{theorem1.2}}
In this section, we establish the exponential decay of solutions constructed in the preceding analysis. While the weighted energy method successfully yields global well-posedness for System \eqref{reformulate1} and provides decay estimates for higher-order derivatives, the resulting algebraic decay rates are suboptimal. To overcome this limitation, we strategically introduce the effective viscous flux
\begin{align}\label{youxiao}
\mathbf{G}\stackrel{\mathrm{def}}{=} -\q \tau - \frac{1}{2} \nabla \Delta^{-1} \u
\end{align}
a carefully designed quantity that crucially enables enhancement of the decay regime. This construct allows us to refine the energy framework, ultimately achieving exponential decay for both the velocity field $\u$ and the stress tensor $\tau$.

We first get by a similar derivation
 of \eqref{basic2} that
\begin{align}\label{half2}
	\frac{1}{2} \frac{d}{dt} \| (\nabla \u, \nabla \tau)\|^2_{H^2} +   \|\nabla^2 \tau \|^2_{H^2} &\le C \| (\u, \tau)\|_{H^3} ( \| \nabla \u\|^2_{H^2} + \| \nabla \tau\|^2_{H^3})\nn\\
	& \le C \| (\u, \tau)\|_{H^3}  \| \nabla \u\|^2_{H^2}
\end{align}
in which we use the following fact:$$ \| \nabla \tau\|^2_{H^3} \le C \| \nabla^2 \tau\|^2_{H^2}  \quad \text{and} \quad \| (\u, \tau)\|_{H^3} \le C\varepsilon.$$

Thanks to the definition of $\mathbf{G}$, it is direct to see that $\u$ satisfy the following damping equation
\begin{align}\label{u}
&\partial_t \u + \frac{1}{2} \u = -\mathbb{P} (\u \cdot \nabla \u) - \mathbb{P} \div \mathbf{G}.
\end{align}
For the aforementioned equation, using standard energy estimates, we obtain, for $ 1 \le k \le 3$, that
\begin{align*}%\label{damping}
\frac{1}{2} \frac{d}{dt} \| \Lambda^k \u\|^2_{L^2} + \frac{1}{2} \| \Lambda^k \u\|^2_{L^2}  &= -\int_{\T^d} \Lambda^k \mathbb{P} (\u \cdot \nabla \u) \cdot \Lambda^k \u\, dx  -\int_{\T^d} \Lambda^k \mathbb{P} \div \mathbf{G} \cdot \Lambda^k \u\, dx.
\end{align*}
The nonlinear estimates in the above expression are similar to those in $\widetilde{J}_1$  and $\widetilde{J}_2$, hence, we can get
\begin{align*}%\label{damp2}
\frac{1}{2} \frac{d}{dt} \| \Lambda^k \u\|^2_{L^2} + \frac{1}{4} \| \Lambda^k \u\|^2_{L^2} \le C\| \Lambda^{k+1} \mathbf{G}\|^2_{L^2} + C\| \u\|_{H^3} \| \Lambda^k \u\|^2_{L^2}
\end{align*}
from which and
summing up $1\le k \le 3$ give rise to
\begin{align}\label{damp3}
	\frac{1}{2} \frac{d}{dt} \| \nabla \u\|^2_{H^2} + \frac{1}{4} \| \nabla \u\|^2_{H^2} \le C \| \nabla^2 \mathbf{G}\|^2_{H^2} + C \|\u\|_{H^3} \|\nabla \u \|^2_{H^2}.
\end{align}
It can be directly verified that $\mathbf{G}$ satisfies the following equation:
\begin{align}\label{omega}
	\partial_t \mathbf{G} - \Delta \mathbf{G} =  -\nabla \Delta^{-1} \big(\frac{1}{2} \p \div(\u \otimes \u)  + \div \mathbf{g}_2\big) -\frac{1}{2} \nabla \Delta^{-1} \p\div \tau.
\end{align}
Applying the operator $ \Lambda^k(1 \le k \le 3)$ to the above equation and then taking the $L^2$ inner product with $ \Lambda^k \mathbf{G}$, we obtain
\begin{align}\label{damp4}
	\frac{1}{2} \frac{d}{dt} \| \Lambda^k \mathbf{G}\|^2_{L^2} + \| \nabla \Lambda^k \mathbf{G}\|^2_{L^2} &\le \Big| \int_{\T^d}  \Lambda^k \nabla \Delta^{-1} \p \div(\u \otimes \u) \cdot \Lambda^k \mathbf{G}  \, dx\Big| + \Big| \int_{\T^d}  \Lambda^k \q \mathbf{g}_2 \cdot \Lambda^k \mathbf{G}  \, dx\Big| \nn\\
	&\quad + \Big| \int_{\T^d}  \Lambda^k \nabla \Delta^{-1} \p \div \tau  \cdot \Lambda^k \mathbf{G}  \, dx\Big| \\
	& \le (\|  \Lambda^k (\u \otimes \u)\|_{L^2} + \| \Lambda^k \tau\|_{L^2}) \| \Lambda^k \mathbf{G}\|_{L^2} + \|  \Lambda^{k-1} \mathbf{g}_2\|_{L^2} \| \nabla \Lambda^k \mathbf{G}\|_{L^2}.\nn
\end{align}
Summing the above equation over $1\le k \le 3$, and then applying Young's inequality give rise to
\begin{align}\label{damp5}
	\frac{1}{2} \frac{d}{dt} \| \nabla \mathbf{G}\|^2_{H^2} + \frac{1}{4} \| \nabla^2 \mathbf{G}\|^2_{H^2} \le C (\| \nabla (\u \otimes \u)\|^2_{H^2} + \|\nabla \tau\|^2_{H^2} + \|\mathbf{g}_2\|^2_{H^2} )
\end{align}
where we have used the fact $$ \|\nabla \tau\|_{H^2} \|\nabla \mathbf{G} \|_{H^2} \le \frac{1}{4} \|\nabla^2 \mathbf{G} \|^2_{H^2}  + C \|\nabla \tau\|^2_{H^2}.$$
The left work is  to estimate  nonlinear terms in \eqref{damp5}. Since $H^2(\T^d)$ is a Banach algebra, we obtain
\begin{align}\label{nonlinear}
	\| \nabla (\u \otimes \u)\|^2_{H^2} \lesssim \| \u\|^2_{H^3} \| \nabla \u\|^2_{H^2}, \quad
	 \|\mathbf{g}_2\|^2_{H^2} \lesssim \| (\u, \tau)\|^2_{H^3} \| (\nabla \u, \nabla \tau)\|^2_{H^2}.
\end{align}

The combination of \eqref{damp5} and \eqref{nonlinear} leads to
\begin{align}\label{damp6}
	\frac{1}{2} \frac{d}{dt} \| \nabla \mathbf{G}\|^2_{H^2} + \frac{1}{4} \| \nabla^2 \mathbf{G}\|^2_{H^2} &\le C \|\nabla \tau\|^2_{H^2} + C \| (\u, \tau)\|^2_{H^3} \| (\nabla \u, \nabla \tau)\|^2_{H^2}\nn\\
	& \le C \|\nabla^2 \tau\|^2_{H^2} + C \| (\u, \tau)\|^2_{H^3} \| (\nabla \u, \nabla \tau)\|^2_{H^2}.
\end{align}

Multiplying \eqref{damp6} by $8C$ and adding it to \eqref{damp3}, we have
\begin{align}\label{damp7}
	 \frac{d}{dt} \| (\nabla \u, \nabla \mathbf{G})\|^2_{H^2} + c(\| \nabla \u\|^2_{H^2} + \| \nabla^2 \mathbf{G}\|^2_{H^2}) \le C \|\nabla^2 \tau\|^2_{H^2} + C \| (\u, \tau)\|^2_{H^3} \| (\nabla \u, \nabla \tau)\|^2_{H^2}.
\end{align}

Multiplying \eqref{half2} by $2C$ and adding it to \eqref{damp7}, we have
\begin{align*}%\label{damp8}
	\frac{d}{dt} \| (\nabla \u, \nabla \tau, \nabla \mathbf{G})\|^2_{H^2} + c(\| (\nabla \u, \nabla^2 \tau, \nabla^2 \mathbf{G})\|^2_{H^2} \le C \| (\u, \tau)\|^2_{H^3} \| (\nabla \u, \nabla \tau)\|^2_{H^2}.
\end{align*}
According to the global well--posedness part of Theorem \ref{theorem1.2} and the Poincar\'e inequality, one can further get
\begin{align*}%\label{damp9}
	\frac{d}{dt} \| (\nabla \u, \nabla \tau, \nabla \mathbf{G})\|^2_{H^2} + c(\| (\nabla \u, \nabla \tau, \nabla \mathbf{G})\|^2_{H^2} \le 0.
\end{align*}
Solving the above differential equation, we obtain:
\begin{align}\label{high_decay}
	\| (\nabla \u, \nabla \tau, \nabla \mathbf{G})\|_{H^2} \le  \widetilde{C_0} e^{-\widetilde{C_1}t}.
\end{align}
Next, we need to obtain the exponential decay of the $L^2$-norm of $\u$, taking the $L^2$ inner product of \eqref{u} with $\u$ leads to
\begin{align}\label{u1}
	\frac{1}{2} \frac{d}{dt} \| \u\|^2_{L^2} + \frac{1}{2} \| \u\|^2_{L^2} = - \langle \p \div \mathbf{G}, \u \rangle
\end{align}
where we have used the fact $$ \langle \p(\u \cdot \nabla \u),\u \rangle = 0.$$
Then applying Young's inequality $ |\langle \p \div \mathbf{G}, \u \rangle| \le \frac{1}{4} \| \u\|^2_{L^2} + \| \nabla \mathbf{G} \|^2_{L^2}$ gives rise to
\begin{align}\label{u2}
	\frac{d}{dt} \| \u\|^2_{L^2} + \frac{1}{2} \| \u\|^2_{L^2} \le \| \nabla \mathbf{G} \|^2_{L^2}.
\end{align}
Applying Gronwall's inequality to \eqref{u2}, we have:
\begin{align*}%\label{u3}
	\| \u\|^2_{L^2} & \le \| \u_0\|^2_{L^2} e^{-\frac{1}{2}t} + \int_{0}^{t} e^{-\frac{1}{2}(t-s)} \| \nabla \mathbf{G} \|^2_{L^2} \, ds\nn\\
	& \le \| \u_0\|^2_{L^2} e^{-2C_3 t} + \widetilde{C_0}^2 e^{ -2C_3 t}\int_{0}^{t} e^{2C_3 s} e^{-2\widetilde{C_1} s} \, ds \nn\\
	& \le C_2^2 e^{-2C_3 t},
\end{align*}
where $C_3 = \min \Big\{ \frac{\widetilde{C_1}}{2}, \frac{1}{4} \Big\}$ and $ C_2 = \min \Big\{ \| \u_0\|_{L^2}, \Big( \frac{\widetilde{C_0}^2}{2C_3}\Big)^{\frac{1}{2}}, \widetilde{C_0}  \Big\}.$
This completes the proof of Theorem \ref{theorem1.2}. \hspace{14.6cm}
$\square$

\bigskip

 \section*{Acknowledgement}
 {This work is  supported by the Guangdong Provincial Natural Science Foundation under grant  2024A1515030115.}

 \vskip .1in
\noindent{\bf Data Availability Statement} Data sharing is not applicable to this article as no
data sets were generated or analysed during the current study.

\vskip .1in

\noindent{\bf Conflict of Interest} The authors declare that they have no conflict of interest. The
authors also declare that this manuscript has not been previously published, and
will not be submitted elsewhere before your decision.

\end{document}